\pgfplotsset{compat=1.12} 
\newtheorem{thm}{Theorem}[section]
\newtheorem{cor}[thm]{Corollary}
\newtheorem{lem}[thm]{Lemma}
\newtheorem{prop}[thm]{Proposition}
\theoremstyle{definition}
\newtheorem{defn}[thm]{Definition}
\newtheorem{ass}[thm]{Assumption}
\theoremstyle{remark}
\newtheorem{rem}[thm]{Remark}
\DeclareFontFamily{U}{mathx}{\hyphenchar\font45}
\DeclareFontShape{U}{mathx}{m}{n}{<->mathx10}{}
\DeclareSymbolFont{mathx}{U}{mathx}{m}{n}
\DeclareMathSymbol{\intop}  {\mathop}{mathx}{"B3}
\DeclareMathSymbol{\iintop} {\mathop}{mathx}{"B4}
\DeclareMathSymbol{\iiintop}{\mathop}{mathx}{"B5}
\DeclareMathSymbol{\ointop} {\mathop}{mathx}{"B6}
\DeclareMathSymbol{\oiintop}{\mathop}{mathx}{"B7}
\DeclareMathOperator{\sgn}{sgn} 
\DeclareFontFamily{U}{mathx}{\hyphenchar\font45}
\DeclareFontShape{U}{mathx}{m}{n}{<-> mathx10}{}
\DeclareSymbolFont{mathx}{U}{mathx}{m}{n}
\DeclareMathAccent{\widebar}{0}{mathx}{"73}
\newcommand{\R}{\mathbb{R}}
\newcommand{\N}{\mathbb{N}}
\newcommand{\E}{\mathbb{E}}
\newcommand{\ind}{\bm{1}}
\newcommand{\ud}{\mathrm{d}}
\newcommand{\uI}{\mathrm{I}}
\newcommand{\uII}{\mathrm{II}}
\newcommand{\uIII}{\mathrm{III}}
\newcommand{\uIV}{\mathrm{IV}}
\newcommand{\uV}{\mathrm{V}}
\newcommand{\be}{\begin{equation}}
\newcommand{\ee}{\end{equation}}
\newcommand{\ba}{\begin{aligned}}
\newcommand{\ea}{\end{aligned}}
\numberwithin{equation}{section}
\newcommand{\cF}{\mathcal{F}}
\newcommand{\cX}{\mathcal{X}}
\newcommand{\EE}{\mathbb{E}}
\newcommand{\FF}{\mathbb{F}}
\newcommand{\PP}{\mathbb{P}}
\newcommand{\tildeN}{\widetilde{N}}
\newcommand{\tildeX}{\widetilde{X}}
\newcommand{\hatN}{\widehat{N}}
\newcommand{\hatX}{\widehat{X}}
\newcommand{\barX}{\widebar{X}}
\newcommand{\red}{}
\newcommand{\dbra}[1]{[\kern-0.15em[ #1 ]\kern-0.15em]}
\newcommand{\dbraco}[1]{[\kern-0.15em[ #1 [\kern-0.15em[}
\newcommand{\dbraoc}[1]{]\kern-0.15em] #1 ]\kern-0.15em]}
\newcommand{\dbraoo}[1]{]\kern-0.15em] #1 [\kern-0.15em[}
\title[On non-negative solutions of stochastic Volterra equations with jumps]{On non-negative solutions of stochastic Volterra equations with jumps and non-Lipschitz coefficients}
\author[A. Alfonsi]{Aur\'elien Alfonsi}
\address{CERMICS, Ecole des Ponts, Marne-la-Vall\'ee, France. MathRisk, Inria, Paris,
  France.}
\email{aurelien.alfonsi@enpc.fr}
\author[G. Szulda]{Guillaume Szulda}
\address{CERMICS, Ecole des Ponts, Marne-la-Vall\'ee, France.}
\email{guillaume.szulda@enpc.fr}
\thanks{This work benefited from the support of the ``chaire Risques financiers'', Fondation du Risque.}
\keywords{ Stochastic Volterra equations with jumps; Strong solution; Pathwise uniqueness; Alpha-stable Lévy process, Affine Volterra processes}
\subjclass[2020]{60H20, 45D05, 60G17}
\date{\today}
\begin{document}

\begin{abstract}
	We consider one-dimensional stochastic Volterra equations with jumps for which we establish conditions upon the convolution kernel and coefficients for the strong existence and pathwise uniqueness of a non-negative c\`adl\`ag solution. By using the approach recently developed by~\cite{Alfonsi23}, we show the strong existence by using a nonnegative approximation of the equation whose convergence is proved via a variant of the Yamada--Watanabe approximation technique. We apply our results to L\'evy-driven stochastic Volterra equations. In particular, we are able to define a Volterra extension of the so-called \emph{alpha-stable Cox--Ingersoll--Ross process}, which is especially used for applications in Mathematical Finance.
\end{abstract}

\maketitle

\section{Introduction}\label{sec:sec1}
We consider one-dimensional stochastic Volterra equations with jumps and of convolution type with the following form:
\be\label{eq:SVE_intro}
\begin{aligned}
	X_t &= X_0 + \int_0^t {K(t-s)\,\mu(X_{s})\,\ud s} + \int_0^t {K(t-s)\,\sigma(X_{s})\,\ud B_s}\\
	&\quad+\int_0^t\int_{\,U} {K(t-s)\,\eta(X_{s-}, u)\,\tildeN(\ud s, \ud u)},
\end{aligned}
\ee
where $B$ is a Brownian motion and $N(\ud t, \ud u)$ is a Poisson random measure on $\R_+ \times U$ with compensator $\hatN(\ud t, \ud u) := \ud t\,\pi(\ud u)$ and compensated measure $\tildeN(\ud t, \ud u) := N(\ud t, \ud u) - \hatN(\ud t, \ud u)$, where $\pi$ is a $\sigma$-finite Borel measure on a complete separable metric space $U$. 

Continuous stochastic Volterra equations, i.e. when $\eta\equiv0$, were extensively studied by diverse authors from the 80s, see, among others, \cite{BM80a,BM80b,Protter85,PP90,CLP95,AN97,CD01,Wang08,Zhang10}. They have then attracted a renewed interest in Mathematical Finance, since the seminal paper \cite{GJR18} that advocates for rough volatility models. As a prominent example, a ``rough''
version of the well-known \emph{Cox--Ingersoll--Ross process} (CIR) was designed by  El Euch and Rosenbaum~\cite{EER19} with the following equation
\begin{equation*}
	X_t = X_0 + \int_0^t {K(t-s)\bigl(a - \kappa \,X_{s}\bigr)\ud s} + \sigma\int_0^t {K(t-s)\sqrt{X_{s}}\,\ud B_s},    
\end{equation*}   
where $a,\sigma \ge 0$, $\kappa \in \R$ and the fractional kernel $K(t)=\frac{t^{H-1/2}}{\Gamma(H+1/2)}$ with $H\in(0,1/2)$.
A Volterra extension of general affine diffusions was also proposed by  Abi Jaber et al.~\cite{AJLP19}. {\red Well-posedness of stochastic Volterra equations with non-Lipschitz coefficients and singular kernel is an active field of research. We mention here the pioneering work of Mytnik and Salisbury~\cite{MS15} and the recent work of Hamaguchi~\cite{Hamaguchi23}.}

The literature on stochastic Volterra equations with jumps is instead very recent, and shows a growing interest. Abi Jaber et al.~\cite{AJCLP21,AJ21} have elaborated a weak solution theory for stochastic Volterra equations driven by a semimartingale. Bondi et al.~\cite{BLP24} then derived, under an affine structure imposed upon the coefficients, a semi-explicit formula for the Fourier--Laplace transform of the solution. These results have been used in Mathematical Finance by Bondi et al.~\cite{BPS24} to develop a new stochastic volatility model.

The purpose of this paper is to establish conditions upon the kernel $K$ and coefficients $\mu,\,\sigma,\,\eta$ for the strong existence and pathwise uniqueness of a non-negative c\`adl\`ag solution of Equation~\eqref{eq:SVE_intro}. {\red When $K \in C^1$, $\mu$ is Lipschitz, $\sigma$ is $1/2$-H\"older and $\eta\equiv0$, \cite[Proposition B.3]{AJEE19a} obtained strong existence and pathwise uniqueness.} When $K\equiv 1$ and $\eta$ not identically zero, Fu and Li~\cite{FL10} and  Li and Mytnik~\cite{LM11} exploited the Markov property of solutions to ensure the non-negativity and thus proved, under regularity conditions upon the coefficients of Yamada--Watanabe type (see \cite{YW71}), the strong existence and pathwise uniqueness of a non-negative c\`adl\`ag solution. They notably applied their results to reconstruct \emph{Continuous-state Branching processes with Immigration} (CBI), initially introduced by \cite{KW71}, which form an important class of non-negative Markov processes with non-negative jumps and also include the CIR process as a special case. An important example of CBI process exhibiting jumps is the \emph{alpha-stable Cox--Ingersoll--Ross process}, which consists in extending the CIR process by adding jumps of alpha-stable type as follows:
\be\label{eq:SDE_Levy_intro}
X_t = X_0 + \int_0^t {\bigl(a - \kappa\,X_{s}\bigr)\ud s} + \sigma\int_0^t {\sqrt{X_{s}}\,\ud B_s} + \eta\int_0^t {\sqrt[\alpha]{X_{s-}}\,\ud L_s}, 
\ee
where $L$ is a spectrally positive compensated $\alpha$-stable L\'evy process with $\alpha\in(1,2)$. By Fu and Li~\cite[Corollary 6.3]{FL10}, there exists a pathwise unique non-negative c\`adl\`ag strong solution of Equation~\eqref{eq:SDE_Levy_intro}. This process and related ones have been used for practical applications, notably in Mathematical Finance, see, e.g., Jiao et al.~\cite{JMS17,JMSS19,JMSZ21} or Fontana et al.~\cite{FGS21rate}.

The main result of the paper (Theorem \ref{thm:main_result} thereafter) reads as follows. Suppose that the kernel $K:\R_+\to\R_+$ is non-negative, non-increasing, twice continuously differentiable and \emph{preserves non-negativity} in the sense of Alfonsi~\cite[Definition 2.1]{Alfonsi23} such that $0 < K(0) < +\infty$. Suppose further that the coefficients $\mu,\,\sigma,\,\eta$ satisfy classical regularity conditions that are essentially those of Li and Mytnik~\cite{LM11}. Then, there exists a pathwise unique non-negative c\`adl\`ag strong solution of Equation~\eqref{eq:SVE_intro}. Let us note that the family of \emph{completely monotone} kernels, which is mainly used in practical applications, satisfy the non-negativity preserving property.   While the previous works mentioned on stochastic Volterra equations with jumps~\cite{AJCLP21,BLP24} deal with weak solution and square integrable jumps (i.e. $\int_{\,U} {\eta(x, u)^2\,\pi(\ud u)}<+\infty$, for all $x\in\R$), we work here with strong solutions and jumps satisfying $\int_{\,U} {|\eta(x, u)|\wedge  \eta(x, u)^2 \,\pi(\ud u)}<+\infty$, for all $x\in\R$, as in~\cite{LM11}. This latter point is crucial to obtain a generalization of~\eqref{eq:SDE_Levy_intro} to Volterra equations. Thus, in the present paper, we are able (see Corollary \ref{cor:Levy_result}) to prove the strong existence and pathwise uniqueness of a non-negative c\`adl\`ag solution of the following equation
\begin{equation*}
	X_t = X_0 + \int_0^t {K(t-s)\bigl(a - \kappa\,X_{s}\bigr)\ud s} + \sigma\int_0^t {K(t-s)\sqrt{X_{s}}\,\ud B_s} + \eta\int_0^t {K(t-s)\sqrt[\alpha]{X_{s-}}\,\ud L_s},
\end{equation*}
when $K$ is completely monotone with $0 < K(0) < +\infty$. This process can be seen as a Volterra alpha-stable Cox--Ingersoll--Ross process. 

The strategy we adopt for the strong existence is based upon an approximation of Equation~\eqref{eq:SVE_intro} initially introduced in~\cite{Alfonsi23}. It consists in splitting the convolution with kernel $K$ from the integration of the stochastic differential equation. The crucial property of this approximating process is that it stays nonnegative, relying on the kernel properties and on the results of~\cite{LM11}. The convergence of this approximation is then proved by using a variant of the Yamada--Watanabe functions (see Proposition \ref{prop:barX}). The latter was used, e.g., by \cite{Yamada78,Alfonsi05,GR11,LT19a,LT19b} in the context of (standard) stochastic differential equations. It has also been used very lately by \cite{PS23}, where the authors study continuous stochastic Volterra equations  which are not necessarily of convolution type and have a H\"older continuous diffusion coefficient. With respect to this work, we have two new difficulties in our framework. The first one is to handle the jumps when using Yamada--Watanabe functions: this difficulty is overcome by the technical Lemma~\ref{lem:lem_varphi_2} that allow to compare precisely enough the jumps of two approximating processes.  The second difficulty is that the processes that we consider only have a finite first moment, because of our assumption on the jumps. We can however take advantage of the non-negativity of our approximating processes to get a uniform upper bound of the first moment (Proposition~\ref{prop:first_moment_estimates}).

The paper is structured as follows. Section~\ref{sec:sec2} introduces the framework and the main result of the paper. We prove the pathwise uniqueness and strong existence for Equation~\eqref{eq:SVE_intro} respectively in Sections \ref{sec:sec3} and \ref{sec:sec4}.  Section \ref{sec:sec6} applies our main result to L\'evy-driven stochastic Volterra equations. We provide some auxiliary results in Appendix~\ref{app:sec5} that are used throughout the paper. Last, Appendix \ref{sec:appendix} contains a description of the variant of the Yamada--Watanabe approximation that we use in Section \ref{sec:sec4} as well as key technical lemmas.

\section{Assumptions and main result}\label{sec:sec2}
Let $(\Omega,\cF,\FF:=(\cF_t)_{t\geq0},\PP)$ be a given filtered probability space satisfying the usual conditions and supporting the following independent random elements:
\begin{itemize}
	\item an $\FF$-Brownian motion $B=(B_t)_{t\geq0}$;
	\item an $\FF$-Poisson point random measure $N(\ud t, \ud u)$ on $[0, +\infty) \times U$ with compensator $\hatN(\ud t, \ud u) := \ud t\,\pi(\ud u)$, where $U$ is a complete separable metric space on which $\pi$ is a $\sigma$-finite Borel measure. We denote by $\tildeN(\ud t, \ud u) := N(\ud t, \ud u)-\hatN(\ud t, \ud u)$ its compensated measure.
\end{itemize}
Let us also consider the following ingredients:
\begin{itemize}
	\item $\eta : \R \times U \to \R$ is a Borel function;
	\item $\mu, \sigma : \R \to \R$ are continuous functions;
	\item $K : \R_+ \to \R_+$  is a non-negative continuous function.
\end{itemize}
Note that this implies in particular that $K(0) < +\infty$.

For $X_0\in\R$, we concentrate upon the following one-dimensional stochastic Volterra equation with jumps and of convolution type:
\be\label{eq:SVE}
\begin{aligned}
	X_t &= X_0 + \int_0^t {K(t-s)\,\mu(X_{s})\,\ud s} + \int_0^t {K(t-s)\,\sigma(X_{s})\,\ud B_s}\\
	&\quad+\int_0^t\int_{\,U} {K(t-s)\,\eta(X_{s-}, u)\,\tildeN (\ud s, \ud u)}.
\end{aligned}
\ee
By a \emph{c\`adl\`ag solution} of Equation \eqref{eq:SVE}, we mean an almost surely c\`adl\`ag and $\FF$-adapted stochastic process $X=(X_t)_{t \geq 0}$ that satisfies Equation~\eqref{eq:SVE} almost surely for all $t\geq0$. In particular, the integrals appearing in the right hand side of Equation~\eqref{eq:SVE} are assumed to be well defined. This notion of solution corresponds to the usual notion of strong solution. 
We also say that a c\`adl\`ag solution of Equation~\eqref{eq:SVE} is \emph{non-negative} if we have $\PP(X_t\geq0, \forall t\geq 0)=1$. For an $\FF$-stopping time $\tau:\Omega \to [0,+\infty]$, we will say that a c\`adl\`ag and $\FF$-adapted stochastic process $X=(X_t)_{t \geq 0}$ is a \emph{c\`adl\`ag solution up to $\tau$} if the process
$$\left(X_0 + \int_0^{t\wedge \tau} {K(t-s)\,\mu(X_{s})\,\ud s} + \int_0^{t\wedge \tau} {K(t-s)\,\sigma(X_{s})\,\ud B_s}+\int_0^{t\wedge \tau} \int_{\,U} {K(t-s)\,\eta(X_{s-}, u)\,\tildeN (\ud s, \ud u)} \right)_{t\ge 0} $$
is well defined and is equal to $X_t$ for $t\in[0,\tau)$. 
As far as the well-posedness of Equation~\eqref{eq:SVE} is concerned, we then impose the following global condition on the coefficients $\mu, \sigma$ and $\eta$ of \eqref{eq:SVE}:
\begin{ass}\label{ass:conditions1}
	Suppose that there exists a constant $L > 0$ such that 
	\begin{equation*}
		\bigl|\mu(x)\bigr| + \sigma(x)^2 + \int_{\,U} {\!\bigl(\bigl|\eta(x, u)\bigr|\wedge\eta(x, u)^2\bigr)\pi(\ud u)} \leq L\bigl(1 + |x|\bigr), \qquad \text{for all } x\in\R.
	\end{equation*}
\end{ass}
Thus, under Assumption \ref{ass:conditions1}, Lemma \ref{lem:wellposedness} guarantees that the stochastic integrals appearing on the right-hand side of Equation~\eqref{eq:SVE}, taken with respect to any c\`adl\`ag $\FF$-adapted stochastic process $X=(X_t)_{t \geq 0}$, are well defined for all $t \geq 0$. In addition, we say that \emph{pathwise uniqueness} holds for Equation~\eqref{eq:SVE} if, for any two c\`adl\`ag solutions $X$ and $Y$ of Equation~\eqref{eq:SVE} in the above sense with $X_0 = Y_0$, we have $\PP(X_t = Y_t, \forall t \geq 0)=1$. This is the classical concept of pathwise uniqueness that can also be found, e.g., in \cite[Definition IV.1.5]{IW89}. By using Assumption \ref{ass:conditions1}, we have the following a priori estimates. 
\begin{lem}\label{lem:moments1}
	Let Assumption \ref{ass:conditions1} hold and $K:\R_+\to \R_+$ be a non-negative continuous function. Let $X=(X_t)_{t \geq 0}$ be a c\`adl\`ag solution of Equation~\eqref{eq:SVE}. Then, for every $T>0$, there exists a constant\footnote{In the sequel, the dependence of constants with respect to the parameters will be indicated in subscript.} $C_{T,L,K,X_0}\in \R_+$ depending on $T>0$, the constant $L$ of Assumption \ref{ass:conditions1}, the kernel~$K$ and $X_0$  such that
	\begin{equation*}
		\sup_{t\in[0,T]}\EE[|X_t|] \leq C_{T,L,K,X_0}.
	\end{equation*}
\end{lem}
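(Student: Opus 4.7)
Fix $T>0$. The proof proceeds by localization followed by a Gronwall-type argument on the first moment. Since $X$ is c\`adl\`ag, the stopping times $\tau_n := \inf\{t \ge 0 : |X_t| \ge n\}$ tend to $+\infty$ almost surely as $n\to\infty$. Setting
\[
f_n(t) := \EE\bigl[|X_t|\,\ind_{\{t < \tau_n\}}\bigr], \qquad t\in[0,T],
\]
which is bounded by $n$ and hence finite, my goal is to show that $f_n(t) \le C_{T,L,K,X_0}$ uniformly in $n$ and $t\in[0,T]$; the conclusion will then follow by monotone convergence.

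On the event $\{t<\tau_n\}$ one has $|X_s|\le n$ for every $s\le t$, so multiplying the integrands in \eqref{eq:SVE} by $\ind_{\{s<\tau_n\}}$ does not change the value of $X_t$ on this event. Taking absolute values in \eqref{eq:SVE}, multiplying by $\ind_{\{t<\tau_n\}}$ and taking expectation, the right-hand side splits into a drift contribution, a Brownian contribution, and a jump contribution. The jump contribution must be further split according to $|\eta|\le 1$ and $|\eta|>1$, which is the key technical point: since Assumption~\ref{ass:conditions1} only controls $\int_{\,U}|\eta|\wedge\eta^2\,\pi(\ud u)$, one exploits $\eta^2\ind_{\{|\eta|\le 1\}}\le |\eta|\wedge\eta^2$ together with the $L^2$-isometry for compensated Poisson integrals to handle the small jumps, and $|\eta|\ind_{\{|\eta|>1\}}\le |\eta|\wedge\eta^2$ together with the pointwise bound $|\tildeN|\le N+\hatN$ to handle the large jumps directly in $L^1$.

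Bounding the drift and large-jump pieces by expressions of the form $L\int_0^t K(t-s)(1+f_n(s))\,\ud s$, and the Brownian and small-jump pieces by It\^o's isometry followed by Jensen's inequality giving $\sqrt{L\int_0^t K(t-s)^2(1+f_n(s))\,\ud s}$, one combines everything using the elementary inequality $2\sqrt{y}\le 1+y$ to arrive at an integral inequality of the form
\[
1 + f_n(t) \le (2+|X_0|) + L\int_0^t \bigl(3K(t-s)+K(t-s)^2\bigr)\bigl(1+f_n(s)\bigr)\,\ud s.
\]
Since $K$ is continuous on $[0,T]$, the kernel $3K+K^2$ is bounded there, and the classical Gronwall lemma yields $1+f_n(t)\le C_{T,L,K,X_0}$ independently of $n$. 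Monotone convergence in $n$ then gives the stated bound on $\EE[|X_t|]$. The main obstacle is the $L^1$--$L^2$ dichotomy forced by the non-square-integrable jumps, which mandates the small/large split above; the convolution structure does not complicate the localization, since we only need pointwise control of $X_t$ on $\{t<\tau_n\}$ rather than a new Volterra equation for the stopped process.
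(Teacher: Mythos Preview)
Your proof is correct and follows essentially the same approach as the paper's: localize via $\tau_n=\inf\{t:|X_t|\ge n\}$, split the jump integral into $\{|\eta|\le 1\}$ (handled in $L^2$ via It\^o isometry together with the Brownian part) and $\{|\eta|>1\}$ (handled in $L^1$ via $|\tildeN|\le N+\hatN$), linearize the square-root term, apply Gronwall, and let $n\to\infty$ by monotone convergence. The only cosmetic difference is that the paper packages the small/large-jump estimate and the $L^1$--$L^2$ dichotomy into an auxiliary result (Proposition~\ref{prop:prem_first_moment_estimate}) which is then invoked, whereas you carry out this splitting inline; the mathematical content is the same.
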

\begin{proof}
	Since $X$ is c\`adl\`ag and $\FF$-adapted, $\tau_m := \inf\{t\geq 0 : |X_t| \geq m\}$ (with the usual convention $\inf \emptyset=+\infty$) is an $\FF$-stopping time for every $m\geq1$ (see, e.g., \cite[Example I.5.1]{IW89}). Besides, $\tau_m\to \infty$ almost surely as $m\to \infty$ since the paths of $X$ are c\`adl\`ag and thus locally bounded. We first write from Equation~\eqref{eq:SVE}
	\begin{align*}
		|\ind_{t<\tau_m} X_{t}|\le & \Bigl|X_0 +  \int_0^{t\wedge \tau_m} {K(t-s)\, \mu(X_{s})\,\ud s} + \int_0^{t\wedge \tau_m} {K(t-s)\,\sigma(X_{s})\,\ud B_s}\\
		&+\int_0^{t\wedge \tau_m}\int_{\,U} {K(t-s)\,\eta(X_{s-}, u)\,\tildeN (\ud s, \ud u)} \Bigr|. 
	\end{align*}
	This inequality is clear on $\{t\ge \tau_m \}$ and is an equality on $\{t<\tau_m\}$.  Then, by applying Proposition~\ref{prop:prem_first_moment_estimate} with $p=0$, $\tau=t\wedge\tau_m$, $q=t$ and $H(t,s)=K(t-s)$, we get 
	$$\E[\ind_{t<\tau_m} |X_{t}|]\le |X_0|+  C_L \left(\max_{[0,t]} K \right) \left(1+ 2t +2\int_0^t \E[\ind_{s<\tau_m}|X_{s}|] \ud s \right).$$
	Let $T>0$ and $C_{T,L,K}= 2C_L \left(\max_{[0,T]} K \right) (1+ T)$. We have
	\begin{equation*}
		\EE\bigl[\bigl| \ind_{t<\tau_m} X_{t} \bigr|\bigr] \leq |X_0|+C_{T,L,K} + C_{T,L,K}\int_0^t {\EE\bigl[\bigl|\ind_{s<\tau_m} X_{s}\bigr|\bigr]\ud s},
	\end{equation*}
	We conclude by using Gronwall's Lemma and the monotone convergence theorem as $m\to \infty$. 
\end{proof}

\begin{rem} 
	Unlike \cite{AJCLP21,PS23}, we cannot have higher moments than the first-order moment in  Lemma~\ref{lem:moments1}. This is because $\int_{\{u \in U : |\eta(x, u)|\geq1\}} {\bigl|\eta(x, u)\bigr|\,\pi(\ud u)}<+\infty$, for all $x\in\R$, as imposed by Assumption \ref{ass:conditions1}. This constraint is necessary for the Poisson random measure $N$ to represent the jumps of an alpha-stable L\'evy process (see Section \ref{sec:sec6}). {\red Let us also note that we need to assume $\sigma(x)^2 \leq L(1+|x|)$, for all $x\in\R$, in order to get first-order moments. Working with $L^1$ norms will reveal to be crucial later when dealing with non-negative processes $X$, for which we have $\EE[|X_t|] = \EE[X_t]$. This was notably used by \cite[Proposition 2.3]{FL10}.}
\end{rem}
Let us now formulate our local regularity conditions on the coefficients $\mu, \sigma$ and $\eta$ of Equation~\eqref{eq:SVE}. 
\begin{ass}\label{ass:local_conditions}
	Suppose that
	\begin{enumerate}
		\item[(i)] for every $m\geq1$, there exists a constant $L_m^{\prime} > 0$ such that 
		\begin{equation*}
			\bigl|\mu(x) - \mu(y)\bigr| + \bigl|\sigma(x) - \sigma(y)\bigr|^2 \leq L_m^{\prime}\,\bigl|x - y\bigr|, \qquad \text{for all } (x,y)\in[-m,m]^2;
		\end{equation*}
		\item[(ii)]   the function $x\mapsto\eta(x,u)$ is non-decreasing for every $u \in U$ and, for every $m\geq1$, there exists a non-negative Borel function $f_m:U\to\R_+$ such that 
		\begin{equation*}
			\bigl|\eta(x, u) - \eta(y, u)\bigr| \leq \bigl|x - y\bigr|^{1/2}f_m(u), \qquad \text{for all } (x,y,u) \in [-m,m]^2 \times U,
		\end{equation*}
		where $f_m$ satisfies $\int_{\,U} {(f_m(u) \wedge f_m(u)^2)\,\pi(\ud u)} < +\infty$.
	\end{enumerate}
\end{ass}
\begin{ass}\label{ass:nonneg}
	The coefficients satisfy $\sigma(0)=0$, $\mu(0)\geq0$ and $\eta(0,u)=0$ for all $u\in U$.
\end{ass}
\noindent The conditions stated in Assumptions~\ref{ass:conditions1}, \ref{ass:local_conditions} and~\ref{ass:nonneg} are rather close to those of Li and Mytnik~\cite[Equations (2b-f)]{LM11}. As illustrated by Lemma~\ref{lem:moments1}, Assumption~\ref{ass:conditions1} gives us bounded first order moments, while  Assumption~\ref{ass:nonneg} is crucial for the nonnegativity of the solution. The Assumption~\ref{ass:local_conditions} on the local regularity will be used for both strong existence and pathwise uniqueness of the solution. Note that our Assumption~\ref{ass:local_conditions} is a bit stronger than~\cite[Equations (2b-c)]{LM11} that corresponds to Assumption~\ref{ass:YW_conditions} below. We will be able to show pathwise uniqueness under Assumption~\ref{ass:YW_conditions}, but we need Assumption~\ref{ass:local_conditions} for the strong existence. Roughly speaking, this is due to our approach using an approximating sequence of processes.  We use a doubly-indexed variant of the Yamada--Watanabe functions for their convergence (see Appendix~\ref{sec:appendix}) that use the particular behaviour of the square-root function.

We now turn to the assumptions on the kernel function~$K$. The main difficulty is to have sufficient conditions on~$K$ that guarantee the nonnegativity of the solution~$X$ of~\eqref{eq:SVE}, which motivates the following definition. 
\begin{defn}\cite[Definition 2.1]{Alfonsi23}\label{def:non_negativity} 
	Let $K : \R_+ \to \R_+$ be a non-negative function such that $0 < K(0) < +\infty$. $K$ is said to \emph{preserve non-negativity} if, for any $M\in \N^*$, any $x_1,\dots,x_M \in \R$ and any $0 \leq t_1 < \dots < t_M$ such that 
	\begin{equation*}
		\sum_{j=1}^m x_j\,K(t_m - t_j) \geq 0, \qquad \text{for every } m \in \{1, \dots, M\},
	\end{equation*}
	it holds that 
	\begin{equation*}
		\sum_{m=1}^M \ind_{\{t_m \leq t\}} x_m\,K(t-t_m)\, \geq 0, \qquad \text{for all } t\geq0.
	\end{equation*}
\end{defn}
We are now in position to state our main result, whose proof is postponed at the end of Section~\ref{sec:sec4}.
\begin{thm}\label{thm:main_result}
	Suppose that $X_0\geq0$, Assumptions \ref{ass:conditions1}, \ref{ass:local_conditions}  and~\ref{ass:nonneg} hold true, $K \in C^2(\R_+,\R_+)$ is non-increasing, preserves non-negativity and such that $K(0)>0$. Then, there exists a pathwise unique non-negative c\`adl\`ag solution $X=(X_t)_{t\geq0}$ of Equation \eqref{eq:SVE}.
\end{thm}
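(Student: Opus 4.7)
The statement combines two distinct claims, pathwise uniqueness and strong existence, which I would establish separately and then assemble. Throughout, the smoothness and monotonicity of $K$ play technical roles, while the structural hypothesis that $K$ preserves non-negativity (Definition~\ref{def:non_negativity}) is what ultimately allows the solution to stay in $\R_+$.

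For pathwise uniqueness, I would follow a Yamada--Watanabe strategy adapted to the Volterra setting, corresponding to the content of Section~\ref{sec:sec3}. Let $X$ and $Y$ be two non-negative c\`adl\`ag solutions with $X_0 = Y_0$, and introduce the stopping times $\tau_m := \inf\{t \ge 0 : X_t \vee Y_t \ge m\}$, which tend to $+\infty$ almost surely. On $\dbra{0,\tau_m}$ the coefficients are Lipschitz (for $\mu$), $1/2$-H\"older (for $\sigma$) and $1/2$-H\"older with $x\mapsto\eta(x,u)$ non-decreasing (for $\eta$), by Assumption~\ref{ass:local_conditions}. Because $X-Y$ is not a semimartingale (the kernel $K(t-s)$ depends on $t$), It\^o's formula cannot be applied directly; instead, I would estimate $\EE[|X_{t\wedge\tau_m} - Y_{t\wedge\tau_m}|]$ by combining Yamada--Watanabe smoothing functions $\varphi_\varepsilon$ with an $L^1$-type Volterra estimate in the spirit of Proposition~\ref{prop:prem_first_moment_estimate}. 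The $1/2$-H\"older behaviour of $\sigma$ is absorbed by the standard property of $\varphi_\varepsilon$, while the monotonicity of $\eta(\cdot,u)$ provides the sign cancellation that makes it possible to control the jump contribution using only the $L^1\wedge L^2$ bound on $f_m$. A Gronwall argument then yields $\EE[|X_{t\wedge\tau_m} - Y_{t\wedge\tau_m}|] = 0$ for every $t,m$, and letting $m\to\infty$ concludes.

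For strong existence, I would implement the splitting scheme of~\cite{Alfonsi23} from Section~\ref{sec:sec4}. On a uniform mesh $t_k^n := kT/n$ one decouples the convolution from the stochastic integration, defining the approximation $X^n$ piecewise so that on each $[t_k^n, t_{k+1}^n]$ it solves an \emph{ordinary} SDE with jumps whose coefficients are rescaled versions of $\mu,\sigma,\eta$ via values of $K$ at grid points. Two features of this construction are essential: first, each local SDE has a non-negative c\`adl\`ag strong solution by Li--Mytnik~\cite{LM11} under Assumptions~\ref{ass:conditions1},~\ref{ass:local_conditions} and~\ref{ass:nonneg}; second, the non-negativity preservation of $K$ propagates this positivity to the aggregated process $X^n$ on $[0,T]$, and this in turn allows Proposition~\ref{prop:first_moment_estimates} to deliver a uniform-in-$n$ bound $\sup_n \sup_{t\le T}\EE[X^n_t] < +\infty$, relying crucially on the identity $\EE[|X^n_t|] = \EE[X^n_t]$.

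The main obstacle is the convergence of the approximations $X^n$. I would compare $X^n$ with $X^{n'}$ (for $n' > n$) using the doubly-indexed variant of the Yamada--Watanabe functions described in Appendix~\ref{sec:appendix}, aiming to show that $\EE[|X^n_t - X^{n'}_t|]$ vanishes uniformly on compacts. Two genuinely new difficulties arise compared with the continuous Volterra setting of~\cite{PS23}: (a)~the jumps of $X^n$ and $X^{n'}$ occur at common Poisson times but with different sizes, and their interaction with the smoothing functions must be estimated sharply, which is precisely the role of the technical Lemma~\ref{lem:lem_varphi_2} — a generic Taylor expansion of $\varphi$ being insufficient; and (b)~only the first moment is available (because of the $L^1\wedge L^2$ integrability imposed on the jumps), so every BDG-type estimate has to be carried out at the $L^1$ level, which is possible only thanks to the non-negativity of the approximations. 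Once convergence is established, the limit $X$ is a non-negative c\`adl\`ag solution of~\eqref{eq:SVE} by passing to the limit term by term, and uniqueness follows from the first part of the proof.
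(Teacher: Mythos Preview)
Your outline captures the two-part structure, but there are two genuine gaps.

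For pathwise uniqueness, your claim that ``$X-Y$ is not a semimartingale'' and that ``It\^o's formula cannot be applied directly'' is precisely what the hypothesis $K\in C^2$ is there to rule out. By Proposition~\ref{prop:semi_martingale}, the process $Z_t = \int_0^t K(t-s)\,\ud H_s$ (with $H$ the driving semimartingale of $X-Y$) admits the representation
\[
Z_t = K(0)\,H_t + K'(0)\int_0^t H_s\,\ud s + \int_0^t\biggl(\int_0^s K''(s-r)\,H_r\,\ud r\biggr)\ud s,
\]
which \emph{is} a semimartingale. The paper's proof of Theorem~\ref{thm:pathwise_uniqueness} therefore applies It\^o's formula directly to $\varphi_k(Z_{t\wedge\tau_m})$; the finite-variation correction (the term $\uV$) is controlled via $|K'(0)| + \int_0^T |K''|$, and this is exactly why $C^2$ regularity is assumed. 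Your proposed alternative is also internally inconsistent: Yamada--Watanabe smoothing functions are useful precisely because they are fed into It\^o's formula, so invoking them while claiming It\^o is unavailable does not yield a workable argument.

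For strong existence, you are missing a localization step that the paper carries out explicitly. The convergence of the approximation scheme (Proposition~\ref{prop:barX} and Theorem~\ref{thm:existence}) is established under the \emph{global} Assumption~\ref{ass:global_conditions}, not under the local Assumption~\ref{ass:local_conditions}; the paper stresses that the specific form $\rho(t)^2 = L' t$ is needed for the last bound in~\eqref{eq:varphi_existence}, without which the estimate on the term $\uIII$ (and the application of Lemma~\ref{lem:lem_varphi_2}) does not close. The actual proof of Theorem~\ref{thm:main_result} first truncates the coefficients via $\pi_m(x)=(-m)\vee(x\wedge m)$, so that $(\mu_m,\sigma_m,\eta_m):=(\mu\circ\pi_m,\sigma\circ\pi_m,\eta(\pi_m(\cdot),\cdot))$ satisfy the global assumption; obtains a solution $X^m$ from Theorem~\ref{thm:existence}; uses pathwise uniqueness up to $\tau_m=\inf\{t:X_t^{m+1}\ge m\}$ to force $X^m=X^{m+1}$ on $[0,\tau_m)$; and finally shows $\tau_m\to\infty$ a.s.\ via the uniform first-moment bound of Lemma~\ref{lem:moments1} combined with the semimartingale decomposition of Proposition~\ref{prop:semi_martingale}. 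Running your convergence argument directly under local $1/2$-H\"older control would not work, because the processes $\xi^N$ are unbounded and no stopping is available inside the two-parameter comparison of $\barX^M$ with $\barX^N$.
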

It is shown in~\cite[Theorem 2.3]{Alfonsi23} that completely monotone kernels preserve nonnegativity.  Recall that a function $K:\R_+\to \R_+$ such that $ K(0)>0$ is said to be \emph{completely monotone} if $K \in C^{\infty}(\R_+,\R_+)$ such that $(-1)^n\,K^{(n)}\geq0$ for every $n\geq0$. By Bernstein's theorem, this is equivalent to the the existence of a finite (non trivial) Borel measure $\theta$ on $\R_+$ such that 
\begin{equation*}
	K(t) = \int_0^{+\infty} {e^{-\lambda t}\,\theta(\ud \lambda)}, \qquad \text{for all } t\geq0.
\end{equation*}
For example, if we set $\theta(\ud \lambda) = \sum_{i=1}^n w_i \delta_{\lambda_i}(\ud \lambda)$ with $n\geq1$ and $0 <\lambda_1 < \lambda_2 < \cdots < \lambda_n$ and $w_1,\dots,w_n>0$, then $K(t) = \sum_{i=1}^n w_i e^{-\lambda_i t}$, for all $t\geq0$, defines a completely monotone function. 
For $X_0\geq0$, consider the stochastic Volterra equation
\be\label{eq:sum_exponentials}
X_t = X_0 + \sum_{i=1}^n\int_0^t w_i {e^{-\lambda_i(t-s)}\biggl(\mu(X_{s})\,\ud s + \sigma(X_{s})\,\ud B_s + \int_{\,U} {\eta(X_{s-}, u)\,\tildeN (\ud s, \ud u)}\biggr)}.
\ee
By Theorem~\ref{thm:main_result}, there exists a pathwise unique non-negative c\`adl\`ag solution $X=(X_t)_{t\geq0}$ of Equation~\eqref{eq:sum_exponentials}. If we define for $i\in \{1,\dots,n\}$ the processes 
$$X^i_t=\int_0^t {e^{-\lambda_i(t-s)}\biggl(\mu(X_{s})\,\ud s + \sigma(X_{s})\,\ud B_s + \int_{\,U} {\eta(X_{s-}, u)\,\tildeN (\ud s, \ud u)}\biggr)},$$
we get $X_t=X_0+\sum_{i=1}^n w_i X^i_t$ with
$$dX^i_t=-\lambda X^i_t dt +\mu(X_{t})\,\ud t + \sigma(X_{t})\,\ud B_t + \int_{\,U} {\eta(X_{t-}, u)\,\tildeN (\ud t, \ud u)}.$$
Thus, $(X^1_t,\dots,X^n_t)$ solves a classical SDE with jumps. But even in this case, it may be not obvious that this SDE admits a solution and that $X_t$ remains non-negative. This is the contribution of Theorem~\ref{thm:main_result}. Note that combining this discussion with the results of Section \ref{sec:sec6} (e.g. Corollary \ref{cor:Levy_result}), we can define a multifactor alpha-CIR process.

{\red\begin{rem}
  It would be interesting to deal with completely monotone kernels exploding in zero (e.g. the fractional kernel). This however raises important technical difficulties. In the case without jumps, \cite{MS15} obtain pathwise uniqueness and strong existence for the fractional kernel but with a diffusion coefficient $\sigma(x)=c |x|^\gamma$ with $\gamma>1/2$ while \cite{AJEE19a} consider $\gamma=1/2$ but require as in our work a non-exploding kernel. In the case with jumps, \cite{AJCLP21,BLP24}  obtain weak solutions with a kernel possibly exploding in zero, but assume square integrable jumps (i.e. $\int_{\,U} {\eta(x, u)^2\,\pi(\ud u)}<+\infty$, for all $x\in\R$). Here, we get strong solutions with jumps satisfying $\int_{\,U} {|\eta(x, u)|\wedge  \eta(x, u)^2 \,\pi(\ud u)}<+\infty$, for all $x\in\R$, but we have to assume $K(0)<+\infty$.
\end{rem}}

\section{Pathwise uniqueness}\label{sec:sec3}
In this section, we investigate the pathwise uniqueness of c\`adl\`ag solutions of Equation~\eqref{eq:SVE}. We are able to prove it under slightly weaker regularity conditions on the coefficients $\mu, \sigma$ and $\eta$ than those of Assumption \ref{ass:local_conditions}.
\begin{ass}\label{ass:YW_conditions}
	Suppose that
	\begin{enumerate}
		\item[(i)] for every $m\geq1$, there exists a non-decreasing and concave function $r_m:\R_+\to\R_+$ where $r_m(0)=0$, $r_m(x) > 0$ for $x>0$ and $\int_0^{\varepsilon} {r_m(x)^{-1}\ud x} = +\infty$ for all $\varepsilon > 0$, such that 
		\begin{equation*}
			\bigl|\mu(x) - \mu(y)\bigr| \leq r_m\bigl(|x-y|\bigr), \qquad \text{for all } (x,y)\in[-m,m]^2;
		\end{equation*}
		\item[(ii)]  the function $x\mapsto\eta(x,u)$ is non-decreasing for every $u \in U$ and, for every $m\geq1$, there exist a non-negative Borel function $f_m:U\to\R_+$ and a non-decreasing function $\rho_m:\R_+\to\R_+$ with $\rho_m(0)=0$, $\rho_m(x) > 0$ for $x>0$ and $\int_0^{\varepsilon} {\rho_m(x)^{-2}\ud x} = +\infty$ for all $\varepsilon > 0$, such that
		\begin{equation*}
			\bigl|\sigma(x) - \sigma(y)\bigr| \leq \rho_m\bigl(|x-y|\bigr) \quad \text{and} \quad \bigl|\eta(x,u) - \eta(y,u)\bigr| \leq \rho_m\bigl(|x-y|\bigr)\,f_m(u), 
		\end{equation*}
		for all $(x,y,u)\in[-m,m]^2 \times U$, where $f_m$ satisfies $\int_{\,U} {(f_m(u) \wedge f_m(u)^2)\,\pi(\ud u)} < +\infty$.
	\end{enumerate}
\end{ass} 
\noindent Assumption~\ref{ass:YW_conditions} essentially corresponds to the assumptions made by Li and Mytnik~\cite[Equations (2b-c)]{LM11}. Moreover, as for~\cite[Propositions 3.1 and 3.3]{LM11}, our result extends to c\`adl\`ag solutions of Equation \eqref{eq:SVE} which are not necessarily non-negative.
\begin{thm}\label{thm:pathwise_uniqueness}
	Suppose that $K \in C^2(\R_+,\R_+)$ with $K(0)>0$, Assumptions \ref{ass:conditions1} and \ref{ass:YW_conditions} hold true. Then, pathwise uniqueness holds for Equation~\eqref{eq:SVE}. More precisely, if $\tau:\Omega \to [0,+\infty]$ is an $\FF$-stopping time and $X,Y$ are two c\`adl\`ag solutions of Equation~\eqref{eq:SVE} up to~$\tau$, then it holds that $\PP(X_t=Y_t, \forall t \in[0,\tau))=1$. 
\end{thm}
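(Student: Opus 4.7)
My plan is to exploit the $C^2$ regularity of $K$ to turn the difference $Z := X - Y$ into a genuine semimartingale, and then to run a Yamada--Watanabe argument on $Z$ in the spirit of~\cite{LM11}.

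\emph{Reducing to a semimartingale.} I first localize with $\tau_m := \tau \wedge \inf\{t : |X_t|\vee |Y_t| \geq m\}$, which increases to $\tau$ almost surely; inside the level-$m$ region Assumption~\ref{ass:YW_conditions} provides the triple $(r_m,\rho_m,f_m)$. Using $K(t-s) = K(0) + \int_s^t K'(u-s)\,\ud u$ and stochastic Fubini,
\[
  Z_t = K(0)\,\Delta M_t + \int_0^t \Bigl( K'(0)\,\Delta M_u + \int_0^u K''(u-r)\,\Delta M_r\,\ud r \Bigr)\ud u,
\]
where $\Delta M_t$ is the difference of the two ``raw'' (kernel-free) semimartingale drivers obtained by stripping $K$ out of~\eqref{eq:SVE}. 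Hence $Z$ is a semimartingale with continuous quadratic variation $K(0)^2(\sigma(X_t)-\sigma(Y_t))^2\,\ud t$ and jumps $K(0)(\eta(X_{t-},u)-\eta(Y_{t-},u))$, plus a Volterra ``memory'' drift $K'(0)\,\Delta M_t + \int_0^t K''(t-r)\,\Delta M_r\,\ud r$. Inverting the relation $K(0)|\Delta M_t| \le |Z_t| + \int_0^t |K'(t-r)|\,|\Delta M_r|\,\ud r$ by scalar Gronwall also yields $|\Delta M_t| \le C_{K,T}\sup_{r\le t}|Z_r|$.

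\emph{Yamada--Watanabe estimate.} I apply It\^o's formula to $\phi_n(Z_{t\wedge \tau_m})$, where $\phi_n \in C^2(\R,\R_+)$ is the standard Yamada--Watanabe function built from $\rho_m$: $\phi_n(z) \to |z|$, $|\phi_n'|\leq 1$, $\phi_n''\ge 0$ compactly supported near $0$, and $\phi_n''(z)\rho_m(|z|)^2 \le \epsilon_n \to 0$ (available since $\int_{0^+}\rho_m^{-2}\,\ud x = +\infty$). After taking expectation (the martingale parts vanishing by localization), three contributions appear. The diffusion piece is at most $\tfrac12 K(0)^2 T\,\epsilon_n \to 0$. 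The jump piece, rewritten via the compensator $\pi(\ud u)\,\ud s$, is handled as in~\cite[Proof of Proposition 3.1]{LM11}: monotonicity of $\eta(\cdot,u)$ forces the jump $K(0)\eta_0(s,u) := K(0)(\eta(X_{s-},u)-\eta(Y_{s-},u))$ to have the same sign as $Z_{s-}$, so that the Taylor remainder $\phi_n(Z_{s-}+K(0)\eta_0) - \phi_n(Z_{s-}) - \phi_n'(Z_{s-})K(0)\eta_0$ is nonnegative and, after splitting over $\{f_m\leq 1\}$ and $\{f_m>1\}$, is controlled by $C\,\epsilon_n\int_{\,U}(f_m\wedge f_m^2)\pi(\ud u) \to 0$. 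The drift contribution is estimated via $|\phi_n'|\le 1$, $|\mu(X_s)-\mu(Y_s)|\le r_m(|Z_s|)$, and the bound on $|\Delta M|$ obtained above.

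\emph{Closing the estimate.} Letting $n \to \infty$ leads, by Jensen's inequality and the concavity of $r_m$, to
\[
  \EE[|Z_{t\wedge\tau_m}|] \le K(0)\int_0^t r_m\bigl(\EE[|Z_{s\wedge\tau_m}|]\bigr)\ud s + C_{K,T}\int_0^t \EE\bigl[\sup_{r\le s\wedge \tau_m}|Z_r|\bigr]\ud s.
\]
A BDG/Doob bound on the martingale parts of $Z$ upgrades the left-hand side to $\EE[\sup_{r\le t\wedge\tau_m}|Z_r|]$; Bihari's inequality, invoking the Osgood condition $\int_{0^+}r_m^{-1}\,\ud x = +\infty$, then forces $Z_{\cdot\wedge\tau_m}\equiv 0$, and sending $m\to\infty$ concludes.

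\emph{Main obstacle.} The delicate step is the jump term: unlike the standard Li--Mytnik SDE setting, we must simultaneously absorb the Volterra memory contribution $\int_0^t K''(t-r)\Delta M_r\,\ud r$ in the drift of $Z$ (through the scalar Gronwall inversion linking $|\Delta M|$ and $|Z|$) and control a Taylor remainder $\phi_n(Z_{s-}+K(0)\eta_0) - \phi_n(Z_{s-}) - \phi_n'(Z_{s-})K(0)\eta_0$ where $\eta_0$ is only $\rho_m$-H\"older and merely monotone in the space variable. The sign-matching supplied by monotonicity of $\eta(\cdot,u)$ is what keeps that remainder compatible with the Yamada--Watanabe cancellation, and the split of the $u$-integral according to $\{f_m\le 1\}$ vs.\ $\{f_m>1\}$ is essential to exploit only the first-order integrability $\int_{\,U}(f_m\wedge f_m^2)\pi(\ud u) < +\infty$ permitted by Assumption~\ref{ass:YW_conditions}.
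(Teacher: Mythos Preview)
Your overall strategy---turn $Z=X-Y$ into a semimartingale via the $C^2$ regularity of $K$ and run Yamada--Watanabe---is exactly the paper's. The gap is in how you close the Gronwall loop through the Volterra memory term. By inverting $K(0)|\Delta M_t|\le |Z_t|+\int_0^t|K'(t-r)||\Delta M_r|\,\ud r$ \emph{pathwise}, you get $|\Delta M_t|\le C_{K,T}\sup_{r\le t}|Z_r|$, which forces $\EE[\sup_{r\le s\wedge\tau_m}|Z_r|]$ onto the right-hand side of your final inequality. The proposed ``BDG/Doob upgrade'' of the left-hand side does not close this: BDG on the Brownian part yields $C\,\EE[(\int_0^t\rho_m(|Z_s|)^2\,\ud s)^{1/2}]$, a square root of a time integral that cannot be rewritten as $\int_0^t F(g(s))\,\ud s$ with $\int_{0^+}F^{-1}=+\infty$ (no concavity on $\rho_m$ is assumed), and BDG on the jump part produces $\EE[[\Delta M^d]_t^{1/2}]$ whose estimation over $\{f_m>1\}$ again fails to land in Bihari form under the mere assumption $\int_U(f_m\wedge f_m^2)\,\pi(\ud u)<+\infty$. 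The paper avoids the supremum altogether: it keeps the memory contribution as $C\int_0^t|H_{s\wedge\tau_m}|\,\ud s$ (their $H$ is your $\Delta M$), derives the companion estimate $\EE[|H_{t\wedge\tau_m}|]\le K(0)^{-1}(\EE[|Z_{t\wedge\tau_m}|]+\max_{[0,T]}|K'|\int_0^t\EE[|H_{s\wedge\tau_m}|]\,\ud s)$ \emph{in expectation}, adds the two, and applies Osgood--Gr\"onwall to the single scalar $t\mapsto\EE[|Z_{t\wedge\tau_m}|+|H_{t\wedge\tau_m}|]$.

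A smaller imprecision: your claimed bound on the jump compensator term, ``$C\epsilon_n\int_U(f_m\wedge f_m^2)\,\pi(\ud u)$'', is not correct on $\{f_m>1\}$. There the mean-value estimate gives only $2K(0)|\eta_0|\le 2K(0)\rho_m(|Z_{s-}|)f_m(u)$, with no $\epsilon_n$. The paper handles this with a second truncation parameter: split at $\{f_m<n\}$ versus $\{f_m\ge n\}$, bound the latter piece by $2K(0)\rho_m(2m)\int_{\{f_m\ge n\}}f_m\,\pi(\ud u)$, and send $n\to\infty$ \emph{after} the Yamada--Watanabe index, using dominated convergence to make $\int_{\{f_m\ge n\}}f_m\,\pi(\ud u)\to 0$.
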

\begin{proof} 
	Let $X$ and $Y$ be two c\`adl\`ag solutions of Equation~\eqref{eq:SVE} up to~$\tau$ with $X_0 = Y_0$ and define, for every $m\geq1$, $\tau_m := \inf\bigl\{t\geq0 : |X_t| \geq m \text{ or } |Y_t|\geq m\bigr\}$. Since $X=(X_t)_{t\geq0}$ and $Y=(Y_t)_{t\geq0}$ are c\`adl\`ag and $\FF$-adapted by definition, $\tau_m$ is an $\FF$-stopping time for every $m\geq1$ and we have $\PP(\tau_m\to+\infty,\,m\to+\infty)=1$. We define then the process $Z$ by 
	$$ Z_t=\int_0^t K(t-s) \,\ud H_s, $$
	where we define the process $H_t=(H_t)_{t\geq0}$ as
	\begin{equation*}
		H_t := \int_0^{t} \ind_{s<\tau}{\biggl(\bigl(\mu(X_{s}) - \mu(Y_{s})\bigr)\ud s + \bigl(\sigma(X_{s})-\sigma(Y_{s})\bigr)\ud B_s + \int_{\,U} {\bigl(\eta(X_{s-}, u)-\eta(Y_{s-}, u)\bigr)\tildeN (\ud s, \ud u)}\biggr)}.
	\end{equation*} 
	By construction, we have $Z_t=X_t-Y_t$ for $t\in[0,\tau)$. Using $K \in C^2(\R_+,\R_+)$ and Proposition~\ref{prop:semi_martingale}, we also have
	\begin{equation*}
		Z_{t} = K(0)\,H_{t  } + K'(0) \int_0^{t} { H_{s} \ud s}+\int_0^{t} {\biggl(\int_0^s {K''(s-r) H_{r}} \ud r \biggr)\ud s},
	\end{equation*}
	so that the process $Z$ is c\`adl\`ag. 
	In addition, the process $(Z_{t\wedge \tau_m})_{t\ge 0}$ is an $\FF$-semimartingale. 
	Under Assumption \ref{ass:YW_conditions}-(ii), we consider a sequence $(a_k)_{k\in\N}\in(0,1]^{\N}$ such that 
	\begin{equation*}
		a_0=1, \qquad a_{k-1} > a_k, \qquad a_k \underset{k\to+\infty}{\longrightarrow} 0, \quad \text{and} \quad \int_{a_k}^{a_{k-1}} {\frac{\ud x}{\rho_m(x)^2}} = k,
	\end{equation*}
	for every $k\geq1$. Following the argument used in the proof of \cite[Theorem 1]{YW71}, we can construct, for every $m\geq1$, a sequence of smooth functions $(\varphi_k)_{k\in\N} \in C^2(\R,\R)^{\N}$ such that $\varphi_k(0)=0$, $\varphi_k(-x)=\varphi_k(x)$ and for all $x\geq0$,
	\be\label{eq:varphi_YW}
	\begin{aligned}
		&\varphi_k^{\prime}(x) = 0 \,\text{ if }\, x \leq a_k, \quad 0 \leq \varphi_k^{\prime}(x) \leq 1 \,\text{ if }\, a_k < x < a_{k-1}, \text{ and }\, \varphi_k^{\prime}(x) = 1 \,\text{ if }\, x \geq a_{k-1};\\
		&0 \leq \varphi_k^{\prime\prime}(x) \leq \frac{2}{k\,\rho_m(x)^2} \,\text{ if }\, a_k < x < a_{k-1}, \quad \varphi_k^{\prime\prime}(x) = 0 \,\text{ otherwise.}
	\end{aligned}
	\ee
	Most importantly, it holds that $\varphi_k(x)\to|x|$ non-decreasingly as $k\to+\infty$ for all $x\in\R$. Since $\varphi_k \in C^2(\R,\R)$ for every $m\in\N$ and $k\in\N$, we can apply It\^o's formula and get:
	\be\label{eq:ito_YW}
	\varphi_k(Z_{t\wedge\tau_m}) = \uI_t + \uII_t + \uIII_t + \uIV_t + \uV_t,
	\ee
	for all $t\in[0,T]$ and $T\in(0,+\infty)$, where we write
	\begin{align*}
		\uI_t &:= K(0)\int_0^{t\wedge\tau_m} {\varphi_k^{\prime}(Z_{s})\ind_{s<\tau} \bigl(\mu(X_{s}) - \mu(Y_{s})\bigr)\ud s},\\
		\uII_t &:= K(0)\int_0^{t\wedge\tau_m} {\varphi_k^{\prime}\bigl(Z_{s}\bigr)\ind_{s<\tau} \bigl(\sigma(X_{s})-\sigma(Y_{s})\bigr)\ud B_s}\\
		&\quad+ \int_0^{t\wedge\tau_m}\ind_{s<\tau}\int_{\,U} {\Bigl(\varphi_k\bigl(Z_{s-} + K(0)\,h\bigl(X_{s-}, Y_{s-} ,u\bigr)\bigr) - \varphi_k\bigl(Z_{s-}\bigr)\Bigr)\tildeN (\ud s, \ud u)},\\
		\uIII_t &:= \frac{1}{2}\,K(0)^2\int_0^{t\wedge\tau_m}{\varphi_k^{\prime\prime}\bigl(Z_{s}\bigr)\ind_{s<\tau} \bigl(\sigma(X_{s})-\sigma(Y_{s})\bigr)^2\ud s},\\
		\uIV_t &:= \int_0^{t\wedge\tau_m} \ind_{s<\tau} \int_{\,U} \Big(\varphi_k\bigl(Z_{s} + K(0)\,h\bigl(X_{s}, Y_{s}, u\bigr)\bigr)\\
		&\quad\quad- \varphi_k\bigl(Z_{s}\bigr) - K(0)\,h\bigl(X_{s}, Y_{s}, u\bigr)\,\varphi_k^{\prime}\bigl(Z_{s}\bigr)\Big) \pi(\ud u)\, \ud s\\
		\uV_t &:= \int_0^{t\wedge\tau_m} {\varphi_k^{\prime}\bigl(Z_s\bigr)\biggl( K'(0) H_s + \int_0^s {K''(s-r) H_r \,\ud r }\biggr)\ud s},
	\end{align*}
	and where we have set $h(x,y,u):=\eta(x,u)-\eta(y,u)$, for all $(x,y,u) \in \R^2 \times U$. Making use of $x\leq|x|$ for all $x\in\R$, \eqref{eq:varphi_YW} and Assumption \ref{ass:YW_conditions}-(i) (we have $|X_{s}|<m$ and $|Y_{s}|<m$ for $s<\tau_m$ by definition of $\tau_m$), we first get 
	\begin{equation*}
		\uI_t \leq \bigl|\uI_t\bigr| \leq K(0)\int_0^{t} {\bigl|\varphi_k^{\prime}\bigl(Z_{s}\bigr)\bigr|\,\ind_{s<\tau_m \wedge \tau}\bigl|\mu(X_{s}) - \mu(Y_{s})\bigr|\,\ud s} \leq K(0)\int_0^t {r_m\bigl(\bigl|Z_{s\wedge\tau_m}\bigr|\bigr)\,\ud s}.
	\end{equation*}
	We can then easily check notably by~\eqref{eq:varphi_YW} and Assumption \ref{ass:conditions1} that $\uII=(\uII_t)_{t\geq0}$ is an $\FF$-martingale and, hence, $\EE[\uII_t]=0$ for all $t\geq0$. We also deal with $\uIII$ through \eqref{eq:varphi_YW} and Assumption \ref{ass:YW_conditions}-(ii),
	\begin{equation*}
		\uIII_t \leq \frac{1}{k}\,K(0)^2\int_0^{t\wedge\tau_m} \ind_{s<\tau}{\frac{\bigl(\sigma(X_{s})-\sigma(Y_{s})\bigr)^2}{\rho_m\bigl(\bigl|Z_{s}\bigr|\bigr)^2}\,\ud s} \leq \frac{1}{k}\,T\,K(0)^2.
	\end{equation*}
	Concerning $\uIV$, we separate the integral over $U$ as follows, for $n\geq1$:
	\begin{align*}
		&\uIV_t = \int_0^{t\wedge\tau_m\wedge \tau}\int_{\{f_m(u)<n\}} \Bigl(\varphi_k\bigl(Z_{s} + K(0)\,h\bigl(X_{s}, Y_{s}, u\bigr)\bigr) - \varphi_k\bigl(Z_{s}\bigr) - K(0)\,h\bigl(X_{s}, Y_{s}, u\bigr)\,\varphi_k^{\prime}\bigl(Z_{s}\bigr)\Bigr)\ud s\,\pi(\ud u)\\
		&\quad+ \int_0^{t\wedge\tau_m\wedge \tau}\int_{\{f_m(u) \geq n\}} \Bigl(\varphi_k\bigl(Z_{s} + K(0)\,h\bigl(X_{s}, Y_{s}, u\bigr)\bigr) - \varphi_k\bigl(Z_{s}\bigr) - K(0)\,h\bigl(X_{s}, Y_{s}, u\bigr)\,\varphi_k^{\prime}\bigl(Z_{s}\bigr)\Bigr)\ud s\,\pi(\ud u)\\
		&=: \uIV_t^1 + \uIV_t^2.
	\end{align*}
	We rewrite $\uIV^1$ with Taylor's formula with integral remainder at order two while injecting \eqref{eq:varphi_YW},
	\begin{align*}
		&\varphi_k\bigl(Z_{s} + K(0)\,h\bigl(X_{s}, Y_{s}, u\bigr)\bigr) - \varphi_k(Z_{s}) - \varphi_k^{\prime}(Z_{s})\,K(0)\,h\bigl(X_{s}, Y_{s}, u\bigr)\\
		&= K(0)^2\,h\bigl(X_{s}, Y_{s}, u\bigr)^2\int_0^1 {(1-r)\,\varphi_k^{\prime\prime}\bigl(Z_{s} + r\,K(0)\,h\bigl(X_{s}, Y_{s}, u\bigr)\bigr)\,\ud r}\\
		&\quad\leq \frac{2}{k}\,K(0)^2\,h\bigl(X_{s}, Y_{s}, u\bigr)^2\int_0^1 {\frac{\ud r}{\rho_m\bigl(\bigl|Z_{s} + r\,K(0)\,h\bigl(X_{s}, Y_{s}, u\bigr)\bigr|\bigr)^2}}.
	\end{align*}
	Since the function $x\mapsto\eta(x, u)$ is non-decreasing for every $u \in U$ by Assumption \ref{ass:YW_conditions}-(ii), we have $Z_{s}\,h(X_{s},Y_{s},u) \geq 0$ almost surely for all $s \in [ 0,\tau)$, notably $|Z_{s} + r\,K(0)\,h(X_{s},Y_{s},u)| \geq |Z_{s}|$. Observing that $\rho_m$ is non-decreasing, it holds that $\rho_m(|Z_{s}|)\leq\rho_m(|Z_{s} + r\,K(0)\,h(X_{s},Y_{s},u)|)$. Then, injecting Assumption \ref{ass:YW_conditions}-(ii) (recall that $|X_{s}|\vee|Y_{s}| \leq m$ for $s<\tau_m$), it follows that
	\begin{equation*}
		\uIV_t^1 \leq \frac{2}{k}\,K(0)^2\int_0^{t}\int_{\{f_m(u)<n\}} {\!\! \ind_{s<\tau_m \wedge \tau}\frac{h\bigl(X_{s},Y_{s},u\bigr)^2}{\rho_m\bigl(\bigl|Z_{s}\bigr|\bigr)^2}\,\ud s\,\pi(\ud u)} \leq \frac{2}{k}\,T\,K(0)^2\,\int_{\{f_m(u)<n\}} {f_m(u)^2\,\pi(\ud u)}.
	\end{equation*}
	In the same vein, using in particular the mean-value theorem, again \eqref{eq:varphi_YW} and Assumption \ref{ass:YW_conditions}-(ii), we bound $\uIV^2$ as follows: 
	\begin{multline*} 
		\uIV_t^2 \leq \bigl|\uIV_t^2\bigr| \leq 2\,K(0)\int_0^{t}\int_{\{f_m(u) \geq n\}} {\ind_{s<\tau_m}\bigl|h\bigl(X_{s}, Y_{s}, u\bigr)\bigr|\,\ud s\,\pi(\ud u)}\\
		\leq 2\,T\,K(0)\,\rho_m(2\,m)\,\int_{\{f_m(u) \geq n\}} {f_m(u)\,\pi(\ud u)},
	\end{multline*}
	where we have also used the fact that  $\rho_m$ is non-decreasing. For the last term, we use~\eqref{eq:varphi_YW} {\red along with Tonelli's theorem} and obtain
	\begin{equation*}
		\uV_t \leq \bigl|\uV_t\bigr| \leq \biggl(\bigl|K^{\prime}(0)\bigr| + {\red\int_0^T {\bigl|K^{\prime\prime}(t)\bigr|\ud t}}\biggr)\int_0^t {\bigl|H_{s\wedge\tau_m}\bigr|\,\ud s}.
	\end{equation*}
	
	In total, injecting all the previously derived bounds into \eqref{eq:ito_YW} and taking the expectation, we obtain using that $r_m$ is concave
	\begin{multline*}
		\EE\bigl[\varphi_k(Z_{t\wedge\tau_m})\bigr] \leq C_{K,T}\biggl(\int_0^t {r_m\bigl(\EE\bigl[\bigl|Z_{s\wedge\tau_m}\bigr|\bigr]\bigr)\,\ud s} + \frac{1}{k}\,\biggl(1 + \int_{\{f_m(u)<n\}} {f_m(u)^2\,\pi(\ud u)}\biggr)\\
		+ \rho_m(2\,m)\int_{\{f_m(u) \geq n\}} {f_m(u)\,\pi(\ud u)} + \int_0^t {\EE\bigl[\bigl|H_{s\wedge\tau_m}\bigr|\bigr]\,\ud s}\biggr),
	\end{multline*}
	where $C_{K,T}\in \R_+$ is a constant depending on the kernel $K$ and $T$ . Letting now $k\to+\infty$ while using Beppo Levi's theorem as $\varphi_k(x)\to|x|$ non-decreasingly for all $x\in\R$, and letting then $n\to+\infty$ (note that $\lim_{n\to+\infty}\,\int_{\{f_m(u) \geq n\}}{f_m(u)\,\pi(\ud u)}=0$ by dominated convergence), we have
	\be\label{eq:eq_Z_YW}
	\EE\bigl[\bigl|Z_{t\wedge\tau_m}\bigr|\bigr] \leq C_{K,T}\biggl(\int_0^t {r_m\bigl(\EE\bigl[\bigl|Z_{s\wedge\tau_m}\bigr|\bigr]\bigr)\,\ud s} + \int_0^t {\EE\bigl[\bigl|H_{s\wedge\tau_m}\bigr|\bigr]\,\ud s}\biggr).
	\ee
	
	In order to derive an inequality for $|Z|+|H|$, we use Proposition~\ref{prop:semi_martingale} and get, since $K(0)>0$, 
	\begin{align}
		\bigl|H_{t\wedge\tau_m}\bigr| &\leq \frac{1}{K(0)}\biggl(\bigl|Z_{t\wedge\tau_m}\bigr| + \left|\int_0^{t\wedge \tau_m} {K^{\prime}(t \wedge \tau_m-s) H_s}\ud s \right|\biggr) \notag\\
		&\leq \frac{1}{K(0)}\biggl(\bigl|Z_{t\wedge\tau_m}\bigr| + \max_{[0,T]}\bigl|K^{\prime}\bigr| \int_0^t {\bigl|H_{s\wedge\tau_m}\bigr|\,\ud s}\biggr).\label{eq:eq_H_YW}
	\end{align}
	Combining therefore \eqref{eq:eq_Z_YW} and  \eqref{eq:eq_H_YW}, we get the following inequality for $|Z|+|H|$,
	\begin{equation*}
		\EE\bigl[\bigl|Z_{t\wedge\tau_m}\bigr| + \bigl|H_{t\wedge\tau_m}\bigr|\bigr] \leq C_{K,T}\int_0^t {\Bigl(r_m\bigl(\EE\bigl[\bigl|Z_{s\wedge\tau_m}\bigr| + \bigl|H_{s\wedge\tau_m}\bigr|\bigr]\bigr) + \EE\bigl[\bigl|Z_{s\wedge\tau_m}\bigr| + \bigl|H_{s\wedge\tau_m}\bigr|\bigr]\Bigr)\ud s},
	\end{equation*}
	for all $t\in[0,T]$ where $T\in(0,+\infty)$ and we recall that $r_m$ is non-decreasing. Thus, by Gr\"onwall's lemma with Osgood's condition (see e.g. Lemma 3.1 of \cite{ChLe}, noting that $\int_0^\varepsilon \frac{1}{x+r_m(x)}dx =+\infty$), we have $\EE[|Z_{t\wedge\tau_m}|]=0$ for all $t\in[0,T]$. Since $\PP(\tau_m\to+\infty,\,m\to+\infty)=1$ and $Z$ is c\`adl\`ag, we get $\PP(Z_t=0, \forall t \in[0,T])=1$. This holds for all $T\in(0,+\infty)$, then $Z$ is null almost surely and, since $Z_t=X_t-Y_t$ for $t\in[0,\tau)$, we deduce that $\PP(X_t=Y_t, \forall t\in[0,\tau))=1$.
\end{proof}

{\red \begin{rem} We see from the estimate of the term $\uV_t$ that we do not need $K''$ to be continuous, but only its local integrability. In fact, if $K'(t)=K'(0)+\int_0^t K''(u) \ud u$ with $ \int_0^t |K''(u)| \ud u<\infty$ for any $t>0$, then the conclusions of Theorem~\ref{thm:pathwise_uniqueness} hold. In fact, all the results of the present paper hold with this weaker assumption, but we prefer to keep in the statements $K\in C^2(\R_+,\R)$ for conciseness since the main difficulty of our results is not there.  
\end{rem}}

\section{Strong existence and non-negative approximation scheme}\label{sec:sec4} 
Set $T\in(0,+\infty)$, $N\in\N^*$, and $t_k := k\,T/N$ for each $k\in\{0,\ldots,N\}$. In this section, we wish to construct a non-negative c\`adl\`ag solution of Equation~\eqref{eq:SVE} on $[0,T]$ by means of two c\`adl\`ag processes: an approximation scheme $\hatX^N = (\hatX_t^N)_{t\in[0,T]}$ and an auxiliary process $\xi^N=(\xi_t^N)_{t\in[0,T]}$. To do so,  we follow the construction proposed by Alfonsi~\cite[Section 3]{Alfonsi23} and adapt it to Equation~\eqref{eq:SVE}. The main difference between~\cite{Alfonsi23} and the present study is that we analyse the convergence in terms of $L^1$ instead of $L^2$ error. This is due to our assumption on the jumps. Indeed, our goal is to extend to Volterra equations the framework of Li and Mytnik~\cite{LM11} for which only first order moments are finite. 

We first recall a useful result for the nonnegativity of the approximating processes $\hatX^N$ and $\xi^N$.
\begin{prop}\label{prop_positif}\cite[Proposition 2.1]{Alfonsi23}
	Let $K:\R_+ \to \R_+$ be a non-increasing kernel that preserves non-negativity such that $0<K(0)<+\infty$. If $x_0\ge 0$, $k\in \N^*$ and $x_1,\dots, x_k \in \R$ are such that 
	$$\forall i \in \{1,\dots, k\}, \  x_0 +\sum_{j=1}^i x_j K(t_i-t_j)\ge 0, $$
	then we have $x_0 +\sum_{j=1}^k x_j \ind_{t_j \le t}  K(t-t_j) \ge 0$ for all $t\ge 0$.
\end{prop}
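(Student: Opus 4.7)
The plan is to reduce the statement to a direct application of Definition~\ref{def:non_negativity} by absorbing the affine constant $x_0$ into a single auxiliary mass placed at time~$0$. The obstacle is that the definition only handles linear combinations $\sum_j x_j K(t-t_j)$ with no free additive term, so I must introduce a coefficient $y_0$ at a new time $0$ that is simultaneously (a) large enough for the augmented hypothesis of Definition~\ref{def:non_negativity} to hold at each node $t_m$, and (b) small enough that the resulting conclusion still transfers into $x_0 + \sum \geq 0$.

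I would argue separately on each interval $[t_i,t_{i+1})$ (with the conventions $t_0 := 0$ and $t_{k+1} := +\infty$). The case $i=0$ is trivial since the sum is empty and $x_0 \geq 0$. For $i \in \{1,\dots,k\}$ with $K(t_i) > 0$, I set $y_0 := x_0/K(t_i)$ and apply Definition~\ref{def:non_negativity} to the sequence $(y_0, x_1, \dots, x_i)$ at the times $(0, t_1, \dots, t_i)$. The verification of the required hypothesis uses only the monotonicity of $K$ and $t_m \leq t_i$ for $m\leq i$: one has $K(t_m) \geq K(t_i)$, hence $y_0 K(t_m) \geq x_0$, and adding the standing assumption yields
\begin{equation*}
y_0 K(t_m) + \sum_{j=1}^m x_j K(t_m - t_j) \;\geq\; x_0 + \sum_{j=1}^m x_j K(t_m - t_j) \;\geq\; 0,
\end{equation*}
while $y_0 K(0) \geq 0$ is immediate. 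Definition~\ref{def:non_negativity} then delivers $y_0 K(t) + \sum_{j=1}^i \ind_{t_j \leq t}\, x_j K(t-t_j) \geq 0$ for every $t \geq 0$; restricting to $t \in [t_i,t_{i+1})$ and using $K(t) \leq K(t_i)$ (so that $y_0 K(t) \leq x_0$) gives
\begin{equation*}
x_0 + \sum_{j=1}^i x_j K(t - t_j) \;\geq\; y_0 K(t) + \sum_{j=1}^i x_j K(t - t_j) \;\geq\; 0,
\end{equation*}
which is the desired inequality, since $\ind_{t_j\le t}=1$ exactly for $j\leq i$ on this interval.

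The main subtlety, and the key conceptual point of the proof, is that $y_0 = x_0/K(t_i)$ is calibrated to one specific interval: it dominates $x_0$ on $[0,t_i]$ (to inherit the hypothesis) while being dominated by $x_0$ on $[t_i,+\infty)$ (to transfer the conclusion). A single universal $y_0$ cannot do both, so Definition~\ref{def:non_negativity} has to be invoked once per interval, with a fresh coefficient each time. The degenerate case $K(t_i)=0$ poses no genuine obstruction: by monotonicity and non-negativity $K$ then vanishes identically on $[t_i,+\infty)$, so most terms of the sum drop out, or equivalently one can apply the argument at a slightly earlier time $s < t_i$ with $K(s) > 0$ and pass to the limit using the continuity of $K$.
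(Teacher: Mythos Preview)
The paper does not prove this proposition; it merely cites \cite[Proposition 2.1]{Alfonsi23}. So there is no in-paper argument to compare against, and your task reduces to producing a self-contained proof from Definition~\ref{def:non_negativity}.

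Your main argument --- fixing $i$, introducing $y_0 = x_0/K(t_i)$ at time~$0$, verifying the augmented hypothesis via $K(t_m)\ge K(t_i)$, and then using $K(t)\le K(t_i)$ on $[t_i,t_{i+1})$ to recover the affine constant --- is correct and is exactly the natural reduction. (A cosmetic remark: if one allows $t_1=0$, the augmented time sequence $(0,t_1,\dots)$ is not strictly increasing; this is harmless since one can merge $y_0$ with $x_1$ at time~$0$, but it deserves a word.)

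The treatment of the degenerate case $K(t_i)=0$ is where the write-up is loose. The claim that ``most terms of the sum drop out'' is not right: for $t\in[t_i,t_{i+1})$ and $j$ close to $i$, the argument $t-t_j$ can be arbitrarily small, so $K(t-t_j)$ need not vanish. Your fallback --- ``apply the argument at a slightly earlier time $s<t_i$ with $K(s)>0$ and pass to the limit using the continuity of $K$'' --- invokes continuity, which is \emph{not} among the hypotheses of the proposition as stated, and even granting continuity the limiting procedure is not spelled out (which quantity is the limit taken in, and why does the indicator structure survive it?). In the paper's applications $K\in C^2$ and, for completely monotone kernels with $K(0)>0$, one in fact has $K>0$ everywhere, so the degenerate case never occurs; but if you want the proof to match the proposition as written, this case needs an honest argument rather than a gesture.
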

\noindent Note that this result is true for any discretization grid $0\le t_1<\dots<t_k$, but we state it here directly for convenience on the regular time grid defined above.

We now introduce the approximation schemes:
\begin{enumerate}
	\item[\underline{$k=0$:}] We define $(\hatX_t^N)_{t\in[t_0,t_1)}$ as $\hatX_t^N := X_0$ for $t\in[t_0, t_1)$ and $(\xi_t^N)_{t\in[t_0,t_1)}$ as a c\`adl\`ag solution of 
	\be\label{eq:Fu_and_Li_equation_initial}
	\begin{aligned}
		\xi_t^N &= \hatX_{t_1-}^N + \int_{t_0}^t {K(0)\,\biggl(\mu(\xi_{s}^N)\,\ud s + \sigma(\xi_{s}^N)\,\ud B_s + \int_{\,U} {\eta(\xi_{s-}^N, u)\,\tildeN (\ud s, \ud u)}\biggr)}, 
	\end{aligned}
	\ee
	for $t\in[t_0, t_1)$, where we observe that $\hatX_{t_1-}^N = X_0$.
	\item[\underline{$k=1$:}] We then define $(\hatX_t^N)_{t\in[t_1,t_2)}$ by setting $\hatX_{t_1}^N := \xi_{t_1-}^N$ and 
	\begin{equation*}
		\hatX_t^N := X_0 + \frac{\hatX_{t_1}^N - \hatX_{t_1-}^N}{K(0)}\,K(t-t_1), \qquad t\in[t_1, t_2),
	\end{equation*}
	and $(\xi_t^N)_{t\in[t_1, t_2)}$ as a c\`adl\`ag solution of
	\begin{equation*}
		\xi_t^N = \hatX_{t_2-}^N + \int_{t_1}^t {K(0)\,\biggl(\mu(\xi_{s}^N)\,\ud s + \sigma(\xi_{s}^N)\,\ud B_s + \int_{\,U} {\eta(\xi_{s-}^N, u)\,\tildeN (\ud s, \ud u)}\biggr)}, 
	\end{equation*}
	for $t\in[t_1, t_2)$ where, by continuity of $K$, we have $\hatX_{t_2-}^N = X_0 + \frac{\xi_{t_1-}^N - X_0}{K(0)}K(t_2-t_1)$.
	\item[\underline{$k\geq2$:}] We now assume that we have constructed $(\hatX_t^N)_{t\in[t_0, t_k)}$ and $(\xi_t^N)_{t\in[t_0, t_k)}$, $k<N$, by iteration. In the same vein as above, we set $\hatX_{t_k}^N := \xi_{t_k-}^N$, and define $(\hatX_t^N)_{t\in[t_k, t_{k+1})}$ as
	\be\label{eq:hatX}
	\hatX_t^N := X_0 + \sum_{j=1}^k \frac{\hatX_{t_j}^N - \hatX_{t_j-}^N}{K(0)}\,K(t-t_j), \qquad t\in[t_k, t_{k+1}),
	\ee
	and $(\xi_t^N)_{t\in[t_k, t_{k+1})}$ as a c\`adl\`ag solution of
	\be\label{eq:Fu_and_Li_equation}
	\xi_t^N = \hatX_{t_{k+1}-}^N + \int_{t_k}^t {K(0)\,\biggl(\mu(\xi_{s}^N)\,\ud s + \sigma(\xi_{s}^N)\,\ud B_s + \int_{\,U} {\eta(\xi_{s-}^N, u)\,\tildeN (\ud s, \ud u)}\biggr)},
	\ee
	for $t\in[t_k, t_{k+1})$ where, as before, we have $\hatX_{t_{k+1}-}^N = X_0 + \sum_{1 \leq j \leq k} \frac{\hatX_{t_j}^N - \hatX_{t_j-}^N}{K(0)}K(t_{k+1}-t_j)$.
\end{enumerate}
\begin{rem}
	We observe that when $k=N-1$, we can define $\hatX_{t_N}^N := \hatX_{t_N-}^N$ and $\xi_{t_N}^N := \xi_{t_N-}^N$, since $t_N = T$ is a deterministic time at which no jump can happen almost surely. Thus, $\hatX^N$ and $\xi^N$ are c\`adl\`ag processes on $[0,T]$.
\end{rem}
The next lemma shows, under suitable conditions, that the processes $\hat{X}^N$ and $\xi^N$ are well defined.
\begin{lem}\label{lem:nonnegative_approx}
	Let $X_0\geq0$, Assumptions \ref{ass:conditions1}, \ref{ass:nonneg} and \ref{ass:YW_conditions} hold true, $K : \R_+ \to \R_+$ be non-negative, continuous, non-increasing and non-negativity preserving such that $K(0)>0$. Then, the c\`adl\`ag processes $\hatX^N = (\hatX_t^N)_{t\in[0,T]}$ and $\xi^N=(\xi_t^N)_{t\in[0,T]}$ constructed above are well defined, unique and non-negative:
	\begin{enumerate}
		\item[(i)] for every $k\in\{0,\ldots,N-1\}$, there exists a pathwise unique non-negative c\`adl\`ag solution $(\xi_t^N)_{t\in[t_k, t_{k+1})}$ of Equation~\eqref{eq:Fu_and_Li_equation};
		\item[(ii)] it holds that $\PP(\hatX_t^N \geq 0,\forall t\in[0,T])=1$ and $\PP(\xi_t^N \geq 0,\forall t\in[0,T])=1$.
	\end{enumerate}
\end{lem}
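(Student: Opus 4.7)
The plan is to proceed by induction on $k\in\{0,\dots,N-1\}$, simultaneously constructing $\hatX^N$ and $\xi^N$ on $[0,t_{k+1})$ at the $k$-th step and showing that both processes are non-negative with $\hatX^N_{t_{k+1}-}\ge 0$. Two ingredients are essential: a strong existence and pathwise uniqueness result for non-negative c\`adl\`ag solutions of classical SDEs with jumps starting from a non-negative $\cF_{t_k}$-measurable initial condition, which produces $\xi^N$ on each subinterval; and Proposition~\ref{prop_positif}, which propagates non-negativity of $\hatX^N$ from the grid points $t_1,\dots,t_k$ to every intermediate time $t\in[t_k,t_{k+1})$.

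First I would observe that on each subinterval $[t_k,t_{k+1})$, Equation~\eqref{eq:Fu_and_Li_equation} is a classical one-dimensional SDE with jumps driven by $B$ and $\tildeN$, with rescaled coefficients $\tilde\mu:=K(0)\mu$, $\tilde\sigma:=K(0)\sigma$ and $\tilde\eta:=K(0)\eta$. Since $K(0)>0$, these rescaled coefficients inherit from Assumptions~\ref{ass:conditions1}, \ref{ass:nonneg} and~\ref{ass:YW_conditions} the linear growth bound, the Yamada--Watanabe type local regularity (the monotonicity of $\eta(\cdot,u)$ in its first argument being preserved), and the boundary conditions $\tilde\sigma(0)=0$, $\tilde\mu(0)\ge 0$, $\tilde\eta(0,u)=0$. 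The strong existence and pathwise uniqueness of a non-negative c\`adl\`ag solution starting at time $t_k$ from any non-negative $\cF_{t_k}$-measurable initial value is then supplied by the classical SDE results of Fu and Li~\cite{FL10} and Li and Mytnik~\cite{LM11}.

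The induction then runs cleanly. For $k=0$ one has $\hatX^N_{t_1-}=X_0\ge 0$, and the preceding paragraph furnishes a pathwise unique non-negative $\xi^N$ on $[0,t_1)$. Assuming that $\hatX^N$ and $\xi^N$ are defined and non-negative on $[0,t_k)$ with $\hatX^N_{t_i}=\xi^N_{t_i-}\ge 0$ for every $1\le i\le k$, I would set $\hatX^N_{t_k}:=\xi^N_{t_k-}$, extend $\hatX^N$ to $[t_k,t_{k+1})$ through formula~\eqref{eq:hatX}, and write $y_j:=(\hatX^N_{t_j}-\hatX^N_{t_j-})/K(0)$. Direct inspection of~\eqref{eq:hatX} then yields
\begin{equation*}
\hatX^N_{t_i-}=X_0+\sum_{j=1}^{i-1}y_j\,K(t_i-t_j),\qquad \hatX^N_{t_i}=X_0+\sum_{j=1}^{i}y_j\,K(t_i-t_j),
\end{equation*}
where the second identity uses $K(t_i-t_i)=K(0)$. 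The induction hypothesis $\hatX^N_{t_i}\ge 0$ for $i=1,\dots,k$ is therefore exactly the partial-sum hypothesis required by Proposition~\ref{prop_positif}, whose conclusion together with the continuity of $K$ yields $\hatX^N_t\ge 0$ for every $t\in[t_k,t_{k+1})$ and $\hatX^N_{t_{k+1}-}\ge 0$. Applying the SDE well-posedness once more with this non-negative $\cF_{t_k}$-measurable initial value closes the induction. The main obstacle I anticipate is the bookkeeping verification that the shifted SDE~\eqref{eq:Fu_and_Li_equation} with the rescaled coefficients and a random $\cF_{t_k}$-measurable non-negative initial value fits verbatim into the framework of~\cite{FL10,LM11}; no genuine analytical difficulty appears beyond this, since Proposition~\ref{prop_positif} settles the non-negativity of $\hatX^N$ on the open subintervals in one stroke as soon as the grid values are controlled.
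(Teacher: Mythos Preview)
Your proposal is correct and follows essentially the same approach as the paper's proof: induction on $k$, invoking \cite[Theorem 2.3]{LM11} on each subinterval to produce the non-negative $\xi^N$, and applying Proposition~\ref{prop_positif} to propagate non-negativity of $\hatX^N$ from the grid points to the whole interval. Your version is slightly more explicit about the rescaled coefficients and the $\cF_{t_k}$-measurable random initial data, but the structure and key ingredients are identical.
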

\begin{proof}
	The arguments follow the proof of~\cite[Theorem 3.2]{Alfonsi23}, and we show by induction on $k\in\{1,\ldots,N\}$ that $\PP(\hatX_t^N \geq 0,\forall t\in[0,t_k])=1$ and $\PP(\xi_t^N \geq 0,\forall t\in[0,t_k))=1$ as follows.
	\begin{itemize}
		\item For $k=1$, since $X_0\geq0$, we have trivially $\PP(\hatX_t^N\geq0,\forall t\in[0, t_1))=1$ by construction. Moreover, under Assumptions~\ref{ass:conditions1}, \ref{ass:nonneg} and \ref{ass:YW_conditions}, there exists by \cite[Theorem 2.3]{LM11} a pathwise unique non-negative c\`adl\`ag solution $\xi^N = (\xi_t^N)_{t\in[0, t_1)}$ of Equation~\eqref{eq:Fu_and_Li_equation_initial} with initial value $\hatX_{t_1-}^N = X_0 \geq 0$ almost surely, thus ensuring that $\hatX_{t_1}^N := \xi_{t_1-}^N \geq 0$ almost surely;
		\item Suppose now that $\PP(\hatX_t^N \geq 0,\forall t\in[0,t_k])=1$ for {\red$k\geq1$}. By using Equation~\eqref{eq:hatX}, we write
		\begin{equation*}
			\hatX_{t_i}^N = X_0 + \sum_{j=1}^i \frac{\hatX_{t_j}^N - \hatX_{t_j-}^N}{K(0)}\,K(t_i - t_j) \geq 0, \qquad \text{for all } i\in\{1,\ldots,k\}.
		\end{equation*}
		Since $K$ is non-increasing and preserves non-negativity such that $0 < K(0) < +\infty$, then by Proposition~\ref{prop_positif}, we have $\PP(\hatX_t^N \geq 0,\forall t\in[t_k, t_{k+1}))=1$, notably $\hatX_{t_{k+1}-}^N \geq 0$. Applying again \cite[Theorem 2.3]{LM11}, there exists a pathwise unique non-negative c\`adl\`ag solution $\xi^N = (\xi_t^N)_{t\in[t_k, t_{k+1})}$ of equation \eqref{eq:Fu_and_Li_equation}, yielding $\hatX_{t_{k+1}}^N := \xi_{t_{k+1}-}^N \geq 0$ almost surely.
	\end{itemize}
\end{proof}

By Lemma \ref{lem:nonnegative_approx}, we can exploit the non-negativity of the processes $\hatX^N = (\hatX_t^N)_{t\in[0,T]}$ and $\xi^N=(\xi_t^N)_{t\in[0,T]}$ by working directly with their first-order moments (see \cite[Proposition 2.3]{FL10}).
\begin{prop}\label{prop:first_moment_estimates}
	Let the assumptions of Lemma~\ref{lem:nonnegative_approx} hold and  $\hatX^N = (\hatX_t^N)_{t\in[0,T]}$ and $\xi^N=(\xi_t^N)_{t\in[0,T]}$ be the processes defined therein.  Then, there exists a constant $C_{L,K,T,X_0} \in \R_+$ such that
	\begin{equation*}
		\sup_{N \ge 1}\,{\red\sup_{t\in[0,T]}\,\EE\bigl[\xi_t^N + \hatX_t^N\bigr]} \leq C_{L,K,T,X_0}.
	\end{equation*}
\end{prop}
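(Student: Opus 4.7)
The plan is to leverage the non-negativity of $\xi^N$ and $\hatX^N$ established in Lemma~\ref{lem:nonnegative_approx}, combined with Assumption~\ref{ass:conditions1}: since $\xi^N_s\geq 0$, one has $|\mu(\xi^N_s)| \leq L(1+\xi^N_s)$, a bound that is linear in $\xi^N_s$ itself and will enable a Gr\"onwall closure on the first moment. This is the $L^1$ analogue of the $L^2$ scheme of~\cite{Alfonsi23} and mirrors the trick of~\cite[Proposition 2.3]{FL10}. I would proceed in three steps: first verify that $\EE[\xi^N_t]$ is finite for each fixed $N$, then derive two complementary inequalities (one from~\eqref{eq:Fu_and_Li_equation}, one from~\eqref{eq:hatX}), and finally combine them.

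Finiteness of $\EE[\xi^N_t]$ for fixed $N$ would follow by iteration over the subintervals: on $[0,t_1)$, $\xi^N$ solves a classical jump SDE with constant kernel $K(0)$, so Lemma~\ref{lem:moments1} (applied with $K\equiv K(0)$) gives $\sup_{t\in[0,t_1]}\EE[\xi^N_t] < +\infty$; this propagates via~\eqref{eq:hatX} to $\EE[\hatX^N_{t_2-}]$ and then to $\xi^N$ on $[t_1,t_2)$, and so on. Consequently, the stochastic integrals in~\eqref{eq:Fu_and_Li_equation} are true martingales with zero expectation on each subinterval, so taking expectation and using $\xi^N_s\geq 0$ together with $|\mu(x)|\leq L(1+x)$ for $x\geq 0$ yields, for $t\in[t_k,t_{k+1})$,
\begin{equation*}
\EE[\xi^N_t] \leq \EE[\hatX^N_{t_{k+1}-}] + L\,K(0)\,T + L\,K(0)\int_{t_k}^t \EE[\xi^N_s]\,\ud s.
\end{equation*}
Since the increment $\Delta_j := \hatX^N_{t_j} - \hatX^N_{t_j-}$ coincides with the SDE increment of $\xi^N$ on $[t_{j-1},t_j]$ (from the construction $\hatX^N_{t_j}=\xi^N_{t_j-}$ and $\xi^N_{t_{j-1}}=\hatX^N_{t_j-}$), we have $\EE[\Delta_j] = K(0)\int_{t_{j-1}}^{t_j}\EE[\mu(\xi^N_s)]\,\ud s$; substituting this in~\eqref{eq:hatX} at $t=t_{k+1}-$ and using the crucial bound $K(t_{k+1}-t_j)\leq K(0)$ (from $K$ non-increasing) gives
\begin{equation*}
\EE[\hatX^N_{t_{k+1}-}] \leq X_0 + L\,K(0)\,T + L\,K(0)\int_0^{t_k} \EE[\xi^N_s]\,\ud s.
\end{equation*}
Substituting the latter into the former and using $\int_0^{t_k}+\int_{t_k}^t=\int_0^t$ produces the self-referential inequality $\EE[\xi^N_t]\leq X_0 + 2LK(0)T + LK(0)\int_0^t\EE[\xi^N_s]\,\ud s$ valid on $[0,T]$, to which Gr\"onwall's lemma yields a uniform-in-$N$ bound on $\EE[\xi^N_t]$. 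Feeding this back into the second inequality and then into~\eqref{eq:hatX} controls $\EE[\hatX^N_t]$ uniformly on $[0,T]$.

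The main technical subtlety is that under Assumption~\ref{ass:conditions1} the compensated-Poisson integrals are not a priori $L^2$-martingales (only $|\eta|\wedge\eta^2$ is $\pi$-integrable), so the zero-expectation step above has to go through the localization-and-Fatou argument encapsulated in Proposition~\ref{prop:prem_first_moment_estimate}, applied on each subinterval $[t_k,t_{k+1})$ with constant kernel $H(t,s)=K(0)$, exactly as in the proof of Lemma~\ref{lem:moments1}. Note that the decay of $K$ plays no role in the final constant, only the value $K(0)$ does, which is consistent with the use of $K(t_{k+1}-t_j)\leq K(0)$.
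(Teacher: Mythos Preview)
Your proposal is correct and follows essentially the same approach as the paper: exploit non-negativity to turn Assumption~\ref{ass:conditions1} into the linear bound $|\mu(\xi^N_s)|\le L(1+\xi^N_s)$, kill the stochastic integrals via a localization/martingale argument, and close with Gr\"onwall. The only organizational difference is that the paper applies Gr\"onwall locally on each $[t_k,t_{k+1})$ to get $1+\sup_{[t_k,t_{k+1})}\EE[\xi^N_t]\le (1+\EE[\hatX^N_{t_{k+1}-}])e^{K(0)LT}$ and then feeds this into a discrete Gr\"onwall for the sequence $(\EE[\hatX^N_{t_k-}])_k$, whereas you merge the two inequalities into a single continuous Gr\"onwall on $[0,T]$---a slightly more economical packaging of the same estimates.
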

$\PP(\xi_t^N \geq 0,\forall t\in[t_k, t_{k+1}))=1$ for every $k\in\{0,\ldots,N-1\}$ by virtue of Lemma~\ref{lem:nonnegative_approx}
\begin{proof}
	For every $m\geq1$, define $\tau_m := \inf\{t \in [ 0 , T] : \xi_t^N \geq m\}$  and $\tau_m^k:=\tau_m \vee t_k$. Since $\xi^N$ is c\`adl\`ag and $\FF$-adapted by construction, $\tau_m$ is an $\FF$-stopping time for every $m\geq1$. {\red From Equation \eqref{eq:Fu_and_Li_equation}, we get for $t\in[t_k,t_{k+1})$
	\begin{multline*} 
		\EE\bigl[\xi_{t\wedge\tau^k_m}^N\bigl|\cF_{t_k}\bigr] = \hatX_{t_{k+1}-}^N + \EE\biggl[\int_{t_k}^{t\wedge\tau^k_m} {\!K(0)\,\mu(\xi_{s}^N)\,\ud s}\,\biggl|\cF_{t_k}\biggr] + \EE\biggl[\int_{t_k}^{t\wedge\tau_m^k} {\!K(0)\,\sigma(\xi_{s}^N)\,\ud B_s}\,\biggl|\cF_{t_k}\biggr]\\
		+ \EE\biggl[\int_{t_k}^{t\wedge\tau_m^k}\!\int_{\,U} {K(0)\,\eta(\xi_{s-}^N, u)\,\tildeN (\ud s, \ud u)}\,\biggl|\cF_{t_k}\biggr].
	\end{multline*}
	Note that these conditional expectations are well defined since $\PP(\xi_t^N \geq 0,\forall t\in[t_k, t_{k+1}))=1$ for every $k\in\{0,\ldots,N-1\}$ by virtue of Lemma~\ref{lem:nonnegative_approx}. Besides, it can be easily checked that the last two conditional expectations of the right-hand side are null since both integrated processes are $\FF$-martingales under Assumption \ref{ass:conditions1}. Using again Assumption~\ref{ass:conditions1} and the non-negativity of $\xi$, we have
	\begin{equation*}
		\EE\bigl[\xi_{t\wedge\tau_m^k}^N\bigl|\cF_{t_k}\bigr] \leq \hatX_{t_{k+1}-}^N +K(0)\,L\int_{t_k}^t {\bigl(1 + \EE\bigl[\xi_{s\wedge\tau_m^k}^N\bigl|\cF_{t_k}\bigr]\bigr)\ud s}.
	\end{equation*}
	After applying Gr\"onwall's lemma and taking the expectation, this yields
	\begin{equation*}
		1 + \EE\bigl[\xi_{t\wedge\tau_m^k}^N\bigr] \leq \bigl(1+\EE\bigl[\hatX_{t_{k+1}-}^N\bigr]\bigr)e^{K(0)L\,T}.
	\end{equation*}
	Since $\xi$ is a c\`adl\`ag process, we have $\tau_m\to +\infty$ almost surely as $m\to +\infty$ and thus $\tau_m^k\to +\infty$. Using then Fatou's lemma and taking the supremum, we obtain
	\be\label{eq:first_moment_xi}
	1+\sup_{t\in[t_k, t_{k+1})}\!\EE\bigl[\xi_t^N\bigr] \leq \bigl(1 + \EE\bigl[\hatX_{t_{k+1}-}^N\bigr]\bigr)e^{K(0)L\,T}.
	\ee}
	From this, we easily have by induction on $k$ that $\EE[\xi^N_t]<\infty$ and $\EE[\hatX_{t}^N]<\infty$ for all $t\in[t_0,t_k)$ and thus for $t\in[0,T]$. 
	Recall that by continuity of $K$, we have for every $k\in\{0,\ldots,N-1\}$,
	\begin{equation*}
		\hatX_{t_{k+1}-}^N = X_0 + \sum_{j=1}^k \frac{\hatX_{t_j}^N - \hatX_{t_j-}^N}{K(0)}\,K(t_{k+1}-t_j).
	\end{equation*}
	Using $\hatX_{t_j}^N = \xi_{t_j-}^N$, $\hatX_{t_j-}^N = \xi_{t_{j-1}}^N$,  Assumption~\ref{ass:conditions1} and~\eqref{eq:first_moment_xi}, we obtain  
	\begin{align*}
		\EE\bigl[\hatX_{t_{k+1}-}^{N}\bigr] &= X_0 + \sum_{j=1}^k K(t_{k+1}-t_j)\,\EE\biggl[\int_{t_{j-1}}^{t_j} {\mu(\xi_{s-}^N)\,\ud s}\biggr]\\
		&\quad\leq X_0 + L\,\left(\max_{[0,T]}K \right)\,\frac{T}{N}\,\sum_{j=1}^k \Bigl(1 + {\red\sup_{t\in[t_{j-1}, t_j)}\!\EE\bigl[\xi_t^N\bigr]}\Bigr)\\
		&\quad\leq X_0 + L\,e^{K(0)L\,T} \left(\max_{[0,T]}K\right)\,\frac{T}{N}\,\sum_{j=1}^k \bigl(1 + \EE\bigl[\hatX_{t_j-}^N\bigr]\bigr),
	\end{align*}
	where we again used $x\leq|x|$ for all $x\in\R$, Assumption \ref{ass:conditions1}, and  \eqref{eq:first_moment_xi}. Thus, using a discrete version of Gr\"onwall's lemma (see, e.g., \cite{Clark87}), where we denote $C_{L,K,T}:= L\,\left(\max_{[0,T]}\!K\right)\,Te^{K(0)L\,T}$,
	\begin{align}
		1 + \max_{1\le k\le N}\EE\bigl[\hatX_{t_k-}^N\bigr] &\leq \bigl(1 + X_0\bigr)\Bigl(1 + \frac{C_{L,K,T}}{N}\Bigr)^N\notag\\
		&\leq \bigl(1 + X_0\bigr)e^{C_{L,K,T}}.\label{eq:first_moment_hatX}
	\end{align}
	Injecting then \eqref{eq:first_moment_hatX} into \eqref{eq:first_moment_xi}, we get by iteration over $k\in\{0,\ldots,N-1\}$,
	\begin{equation*}
		1 + {\red\sup_{t\in[0,T]}\EE\bigl[\xi_t^N\bigr]} \leq \bigl(1 + X_0\bigr)e^{K(0)L\,T + C_{L,K,T}}.
	\end{equation*}
	In the same fashion as above, we have for $t\in[t_k,t_{k+1})$
	\begin{align*}
		\EE\bigl[\hatX_{t}^{N}\bigr] &= X_0 + \sum_{j=1}^k K(t-t_j)\,\EE\biggl[\int_{t_{j-1}}^{t_j} {\mu(\xi_{s-}^N)\,\ud s}\biggr]\leq X_0 + L\,e^{K(0)L\,T}\left(\max_{[0,T]}K\right)\,\frac{T}{N}\,\sum_{j=1}^k \bigl(1 + \EE\bigl[\hatX_{t_j-}^N\bigr]\bigr),
	\end{align*}
	and thus
	\begin{equation*}
		{\red\sup_{t\in[0,T]}\EE\bigl[\hatX_t^N\bigr]} \leq X_0 + \bigl(1 + X_0\bigr)C_{L,K,T}\,e^{C_{L,K,T}}.\qedhere
	\end{equation*}
\end{proof}
The next two lemmata are technical results in order to state Proposition \ref{prop:barX} thereafter. In doing so, let us denote by $w_{K,T}(\delta)$, for all $\delta>0$, the modulus of continuity of $K$, which is given by
\begin{equation*}
	w_{K,T}(\delta) := \max\bigl\{\bigl|K(t)-K(s)\bigr|:(s,t)\in[0,T]^2,\,|t-s|\leq\delta\bigr\}.
\end{equation*}
\begin{lem}\label{lem:first_moment_estimates_bis} Let the assumptions of Lemma~\ref{lem:nonnegative_approx} hold and  $\hatX^N = (\hatX_t^N)_{t\in[0,T]}$ and $\xi^N=(\xi_t^N)_{t\in[0,T]}$ be the processes defined therein. 
	Then, there exists a constant $C_{L,K,T,X_0} \in \R_+$ such that
	\begin{equation*}
		{\red\sup_{t\in[0,T]}\EE\bigl[\bigl|\xi_t^N - \hatX_t^N\bigr|\bigr]} \leq C_{L,K,T,X_0} \sqrt{\frac{T}{N}}\Biggl(1 + N\,w_{K,T}\biggl(\frac{T}{N}\biggr)\Biggr).
	\end{equation*}
\end{lem}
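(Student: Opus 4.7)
The plan is to write $\xi_t^N - \hatX_t^N$ as an explicit sum of small martingale increments weighted by differences of the kernel. For $t\in[t_k,t_{k+1})$ with $k\in\{0,\ldots,N-1\}$, applying Equation~\eqref{eq:Fu_and_Li_equation} on $[t_{j-1},t_j)$ together with the identities $\hatX_{t_j}^N = \xi_{t_j-}^N$ and $\hatX_{t_j-}^N = \xi_{t_{j-1}}^N$ gives $\hatX_{t_j}^N - \hatX_{t_j-}^N = K(0)\,M_j$, where
\begin{equation*}
M_j := \int_{t_{j-1}}^{t_j}\Bigl(\mu(\xi_s^N)\,\ud s + \sigma(\xi_s^N)\,\ud B_s + \int_{\,U} \eta(\xi_{s-}^N,u)\,\tildeN(\ud s,\ud u)\Bigr),\qquad j=1,\ldots,N.
\end{equation*}
Substituting this into~\eqref{eq:hatX} and into the corresponding expression for $\hatX_{t_{k+1}-}^N$, and using $\xi_t^N = \hatX_{t_{k+1}-}^N + K(0)\,M_{k+1}(t)$ with $M_{k+1}(t)$ defined analogously on $[t_k,t]$, the $X_0$ terms cancel and one obtains
\begin{equation*}
\xi_t^N - \hatX_t^N = \sum_{j=1}^k \bigl[K(t_{k+1}-t_j) - K(t-t_j)\bigr]\,M_j + K(0)\,M_{k+1}(t).
\end{equation*}

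The key per-step estimate to establish is $\EE|M_j|\le C_{L,K,T,X_0}\sqrt{T/N}$, uniformly in $j$ and $N$. Under Assumption~\ref{ass:conditions1} the drift contribution is at most $L(T/N)(1 + \sup_{s\le T}\EE[\xi_s^N])$, and the Brownian contribution is at most $\sqrt{L(T/N)(1+\sup_{s\le T}\EE[\xi_s^N])}$ by It\^o's isometry and Jensen's inequality. For the compensated Poisson integral I would split $\eta = \eta\ind_{\{|\eta|\leq 1\}} + \eta\ind_{\{|\eta|>1\}}$: the small-jump part lies in $L^2$ thanks to $\int_{\,U}\eta^2\ind_{\{|\eta|\le 1\}}\,\pi(\ud u) \le \int_{\,U}(|\eta|\wedge \eta^2)\,\pi(\ud u) \le L(1+|x|)$, so the $L^2$-BDG/It\^o isometry combined with Jensen yields $O(\sqrt{T/N})$, while the large-jump part satisfies $\int_{\,U}|\eta|\ind_{\{|\eta|>1\}}\,\pi(\ud u)\le L(1+|x|)$, so controlling $N(\ud s,\ud u)$ and $\hatN(\ud s,\ud u)$ separately by the triangle inequality yields $O(T/N)$. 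Combined with the uniform first-moment bound $\sup_N\sup_{s\in[0,T]}\EE[\xi_s^N]\le C_{L,K,T,X_0}$ from Proposition~\ref{prop:first_moment_estimates} and $T/N\le\sqrt{T}\cdot\sqrt{T/N}$, these contributions sum to the claimed $C_{L,K,T,X_0}\sqrt{T/N}$ bound (equivalently, this is a direct application of Proposition~\ref{prop:prem_first_moment_estimate} with $H\equiv 1$ on each subinterval of length $T/N$, using Proposition~\ref{prop:first_moment_estimates} to absorb $\EE[\xi_s^N]$).

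Finally, since $|t_{k+1}-t|\le T/N$ for $t\in[t_k,t_{k+1})$, we have $|K(t_{k+1}-t_j)-K(t-t_j)|\le w_{K,T}(T/N)$, and taking expectations in the decomposition and applying the triangle inequality gives
\begin{equation*}
\EE\bigl|\xi_t^N - \hatX_t^N\bigr| \leq w_{K,T}(T/N)\sum_{j=1}^k \EE|M_j| + K(0)\,\EE|M_{k+1}(t)| \leq C_{L,K,T,X_0}\sqrt{T/N}\bigl(1 + N\,w_{K,T}(T/N)\bigr),
\end{equation*}
using $k\le N$. Taking the supremum over $t\in[0,T]$ then yields the claim. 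The main obstacle is the per-step rate $\EE|M_j|=O(\sqrt{T/N})$ under the mixed integrability condition $\int_{\,U}(|\eta|\wedge\eta^2)\,\pi(\ud u)<+\infty$ from Assumption~\ref{ass:conditions1}, which forces the small-jump/large-jump splitting above rather than a single clean BDG application in $L^2$; everything else is a direct consequence of the construction of the scheme and the uniform moment bound of Proposition~\ref{prop:first_moment_estimates}.
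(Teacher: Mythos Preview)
Your proposal is correct and follows essentially the same approach as the paper: you derive the exact decomposition $\xi_t^N-\hatX_t^N=\sum_{j=1}^k[K(t_{k+1}-t_j)-K(t-t_j)]M_j+K(0)M_{k+1}(t)$, whereas the paper reaches the same expression via the triangle inequality $|\xi_t^N-\hatX_t^N|\le|\xi_t^N-\hatX_{t_{k+1}-}^N|+|\hatX_{t_{k+1}-}^N-\hatX_t^N|$, and both then invoke Proposition~\ref{prop:prem_first_moment_estimate} together with the uniform moment bound of Proposition~\ref{prop:first_moment_estimates} to obtain $\EE|M_j|\le C\sqrt{T/N}$.
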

\begin{proof}
	For every $k\in\{0,\ldots,N-1\}$, for all $t\in[t_k, t_{k+1})$, we make use of \eqref{eq:hatX} and \eqref{eq:Fu_and_Li_equation} as follows:
	\begin{align*}
		\bigl|\xi_t^N - \hatX_t^N\bigr| &\leq \bigl|\xi_t^N - \hatX_{t_{k+1}-}^N\bigr| + \bigl|\hatX_{t_{k+1}-}^N - \hatX_t^N\bigr|\\
		&\leq K(0)\biggl|\int_{t_k}^t {\mu(\xi_{s}^N)\,\ud s} + \int_{t_k}^t {\sigma(\xi_{s}^N)\,\ud B_s} + \int_{t_k}^{t}\int_{\,U} {\eta(\xi_{s-}^N, u)\,\tildeN (\ud s, \ud u)}\biggr|\\
		&\quad+ \sum_{j=1}^k\,\Bigl|K(t_{k+1} - t_j) - K(t-t_j)\Bigr|\,\Biggl|\frac{\hatX_{t_j}^N-\hatX_{t_j-}^N}{K(0)}\Biggr|\\
		&\leq K(0)\biggl|\int_{t_k}^t {\mu(\xi_{s}^N)\,\ud s} + \int_{t_k}^t {\sigma(\xi_{s}^N)\,\ud B_s} + \int_{t_k}^{t}\int_{\,U} {\eta(\xi_{s-}^N, u)\,\tildeN (\ud s, \ud u)}\biggr|\\
		&\quad+ w_{K,T}\biggl(\frac{T}{N}\biggr)\sum_{j=1}^k\, \Biggl|\int_{t_{j-1}}^{t_j} {\mu(\xi_{s}^N)\,\ud s} + \int_{t_{j-1}}^{t_j} {\sigma(\xi_{s}^N)\,\ud B_s} + \int_{t_k}^{t}\int_{\,U} {\eta(\xi_{s-}^N, u)\,\tildeN (\ud s, \ud u)} \Biggr|.
	\end{align*}
	Then, for every $1\le j\le k+1$, we get by Proposition~\ref{prop:prem_first_moment_estimate}
	\begin{align*} 	&\EE\biggl[\biggl|\int_{t_j}^{t_{j+1}\wedge t} {\mu(\xi_{s}^N)\,\ud s} + \int_{t_j}^{t_{j+1}\wedge t} {\sigma(\xi_{s}^N)\,\ud B_s} + \int_{t_j}^{t_{j+1}\wedge t}\int_{\,U} {\eta(\xi_{s-}^N, u)\,\tildeN (\ud s, \ud u)}\biggr|\biggr]\\ 
		& \le C_L 
		\left( \frac T N + \int_{t_j}^{t_{j+1}\wedge t} {\EE\bigl[\xi_s^N \bigr]\ud s}  +  \biggl( \frac T N + \int_{t_j}^{t_{j+1}\wedge t} {\EE\bigl[\xi_s^N \bigr]\ud s} \biggr)^{1/2} \right) \\	&\leq C_L(1+\tilde{C}_{L,K,T,X_0})(\sqrt{T}+1)\sqrt{\frac T N},	\end{align*}
	where $\tilde{C}_{L,K,T,X_0}$ is the constant given by Proposition~\ref{prop:first_moment_estimates} which upper bounds $\EE[\xi_t^N]$. We set $C_{L,K,T,X_0}=C_L(1+\tilde{C}_{L,K,T,X_0})(1+\sqrt{T})$ and get
	\begin{align*}
		\EE\bigl[\bigl|\xi_t^N - \hatX_t^N\bigr|\bigr] &\leq C_{L,K,T,X_0}\left( K(0) + N w_{K,T}\biggl(\frac{T}{N}\biggr) \right) \sqrt{\frac T N},
	\end{align*}
	which gives the claim.
\end{proof}

When $K \in C^1(\R_+,\R_+)$, $Nw_{K,T}(T/N)$ is uniformly bounded in~$N$ and Lemma~\ref{lem:first_moment_estimates_bis} indicates that the two approximating processes $\xi^N$ and {\red$\hatX^N$} are close when $N$ gets large.  We now introduce  a third approximating process that will be more convenient to use with It\^o calculus.  
For every $N\geq1$, let $\nu(\cdot,N):[0,T]\to\{0,\cdots,N-1\}$ be such that $\nu(T,N) := N-1$ and for every $k\in\{0,\cdots,N-1\}$ and for all $t\in[t_k, t_{k+1})$, $\nu(t,N) := k$. We now rewrite $\hatX_t^N$ for $t\in[t_k, t_{k+1})$ as  
\begin{align}
	\hatX_t^N &= X_0 + \sum_{j=1}^k \int_{t_{j-1}}^{t_j} {\!K(t - t_j)\biggl(\mu(\xi_{s}^N)\,\ud s + \sigma(\xi_{s}^N)\,\ud B_s + \int_{\,U} {\eta(\xi_{s-}^N, u)\,\tildeN (\ud s, \ud u)}\biggr)}\notag\\
	&= X_0 + \int_0^{t_{\nu(t,N)}} {K\bigl(t - t_{\nu(s,N)+1}\bigr)\biggl(\mu(\xi_{s}^N)\,\ud s + \sigma(\xi_{s}^N)\,\ud B_s + \int_{\,U} {\eta(\xi_{s-}^N, u)\,\tildeN (\ud s, \ud u)}\biggr)}.\label{eq:nu_N}
\end{align}
Let us then define the process $\barX^N=(\barX_t^N)_{t\in[0,T]}$ as
\be\label{eq:barX}
\barX_t^N := X_0 + \int_0^t {K(t-s)\biggl(\mu(\xi_{s}^N)\,\ud s + \sigma(\xi_{s}^N)\,\ud B_s + \int_{\,U} {\eta(\xi_{s-}^N, u)\,\tildeN (\ud s, \ud u)}\biggr)}.
\ee
Note that the process $\bar{X}^N$ may not be non-negative. However, comparing~\eqref{eq:barX} and~\eqref{eq:nu_N}, we may expect it to be close to the non-negative process $\hat{X}^N$. This is stated in the following lemma. 

\begin{lem}\label{lem:barX} Let the assumptions of Lemma~\ref{lem:nonnegative_approx} hold, $\hatX^N$ and $\xi^N$ be the processes defined therein, and $\bar{X}^N$ be defined by~\eqref{eq:barX}.  Then, there exists a constant $C_{L,K,T,X_0} \in \R_+$ such that
	\begin{align*}
		{\red\sup_{t\in[0,T]}\EE\bigl[\bigl|\hatX_t^N - \barX_t^N\bigr|\bigr]} &\leq C_{L,K,T,X_0}\Biggl( \sqrt{\frac{T}{N}} + w_{K,T}\biggl(\frac{T}{N}\biggr)\Biggr),\\ 
		{\red\sup_{t\in[0,T]}\EE\bigl[\bigl|\xi_t^N - \barX_t^N\bigr|\bigr]} &\leq C_{L,K,T,X_0}\,\Biggl( \sqrt{\frac{T}{N}} + w_{K,T}\biggl(\frac{T}{N}\biggr) + \sqrt{\frac{T}{N}} \times N\, w_{K,T}\biggl(\frac{T}{N}\biggr) \Biggr).
	\end{align*}
	If in addition $K \in C^1(\R_+,\R_+)$, then there exists a constant $C_{L,K,T,X_0}\in \R_+$ such that
	\be\label{eq:lem articular_case}
	{\red\sup_{t\in[0,T]}\EE\bigl[\bigl|\hatX_t^N - \barX_t^N\bigr|\bigr] +\sup_{t\in[0,T]}\EE\bigl[\bigl|\xi_t^N - \barX_t^N\bigr|\bigr]} \leq C_{L,K,T,X_0}\,\frac{1}{\sqrt{N}}.
	\ee
\end{lem}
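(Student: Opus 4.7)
The plan is to use the explicit rewriting~\eqref{eq:nu_N} of $\hatX^N$ to compare it to $\barX^N$ term by term, bounding each piece with Proposition~\ref{prop:prem_first_moment_estimate} and the uniform first-moment bound from Proposition~\ref{prop:first_moment_estimates}.

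First I would write, for any $t\in[0,T]$,
\begin{align*}
\hatX_t^N - \barX_t^N &= \int_0^{t_{\nu(t,N)}}\!\!\!\bigl[K\bigl(t-t_{\nu(s,N)+1}\bigr)-K(t-s)\bigr]\biggl(\mu(\xi_s^N)\,\ud s + \sigma(\xi_s^N)\,\ud B_s + \int_U \eta(\xi_{s-}^N,u)\,\tildeN(\ud s,\ud u)\biggr) \\
&\quad- \int_{t_{\nu(t,N)}}^{t}\!\!\! K(t-s)\biggl(\mu(\xi_s^N)\,\ud s + \sigma(\xi_s^N)\,\ud B_s + \int_U \eta(\xi_{s-}^N,u)\,\tildeN(\ud s,\ud u)\biggr).
\end{align*}
On the interval of the first integral, $|t_{\nu(s,N)+1}-s|\le T/N$, so the weighting kernel is bounded in absolute value by $w_{K,T}(T/N)$; on the second interval, $K(t-s)\le K(0)$ since $K$ is non-increasing, and the length of integration is at most $T/N$. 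Applying Proposition~\ref{prop:prem_first_moment_estimate} to each piece (with weight $H(t,s)$ equal to the respective kernel difference or $K(t-s)$), together with the uniform bound $\sup_{N,s\le T}\EE[\xi_s^N]\le C_{L,K,T,X_0}$ from Proposition~\ref{prop:first_moment_estimates}, yields
\[
\EE\bigl[\bigl|\hatX_t^N-\barX_t^N\bigr|\bigr] \le C_{L,K,T,X_0}\,w_{K,T}(T/N)\,\bigl(T+\sqrt{T}\bigr) + C_{L,K,T,X_0}\,K(0)\,\bigl(T/N+\sqrt{T/N}\bigr),
\]
which gives the first bound after absorbing constants.

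Next, the bound on $\EE[|\xi_t^N-\barX_t^N|]$ follows immediately from the triangle inequality
\[
\EE\bigl[\bigl|\xi_t^N-\barX_t^N\bigr|\bigr] \le \EE\bigl[\bigl|\xi_t^N-\hatX_t^N\bigr|\bigr] + \EE\bigl[\bigl|\hatX_t^N-\barX_t^N\bigr|\bigr]
\]
by combining Lemma~\ref{lem:first_moment_estimates_bis} (which provides the $\sqrt{T/N}\,(1+N\,w_{K,T}(T/N))$ term) with the bound on $\EE[|\hatX_t^N-\barX_t^N|]$ just proved. Finally, if $K\in C^1(\R_+,\R_+)$ then $K$ is Lipschitz on $[0,T]$ with Lipschitz constant $\max_{[0,T]}|K'|$, so $w_{K,T}(T/N)\le \max_{[0,T]}|K'|\cdot T/N$; substituting this into the two estimates makes every term $O(1/\sqrt{N})$, yielding~\eqref{eq:lem articular_case}.

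The only non-routine step is the application of Proposition~\ref{prop:prem_first_moment_estimate} with the kernel differences as weights: one must check that the factor $|K(t-t_{\nu(s,N)+1})-K(t-s)|\le w_{K,T}(T/N)$ can be pulled out uniformly in $s$ to play the role of $\max H$ in that proposition, and similarly that $K(t-s)\le K(0)$ for the tail integral; once this is granted, the rest is a direct combination of the available a priori estimates on $\EE[\xi^N_s]$.
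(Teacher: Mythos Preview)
Your proposal is correct and follows essentially the same approach as the paper: the same splitting of $\hatX_t^N-\barX_t^N$ into the kernel-difference integral on $[0,t_{\nu(t,N)}]$ and the tail integral on $[t_{\nu(t,N)},t]$, each bounded via Proposition~\ref{prop:prem_first_moment_estimate} (with $\|H\|_t\le w_{K,T}(T/N)$ and $\|H\|_t\le \max_{[0,T]}K$ respectively) and Proposition~\ref{prop:first_moment_estimates}, then the triangle inequality with Lemma~\ref{lem:first_moment_estimates_bis} for the second estimate, and the Lipschitz bound on $w_{K,T}$ when $K\in C^1$.
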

\begin{proof}
	For all $t\in[0,T]$, using \eqref{eq:nu_N} and \eqref{eq:barX}, we write
	\begin{align*}
		\bigl|\hatX_t^N - \barX_t^N\bigr| &\leq \biggl|\int_0^{t_{\nu(t,N)}} {\!\!\Bigl(K\bigl(t - t_{\nu(s,N)+1}\bigr)-K\bigl(t - s\bigr)\Bigr)\!\biggl(\mu(\xi_{s}^N)\,\ud s + \sigma(\xi_{s}^N)\,\ud B_s + \int_{\,U} {\eta(\xi_{s-}^N, u)\,\tildeN (\ud s, \ud u)}\biggr)}\biggr|\\
		&\quad+ \biggl|\int_{t_{\nu(t,N)}}^t {\!\!K(t-s)\biggl(\mu(\xi_{s}^N)\,\ud s + \sigma(\xi_{s}^N)\,\ud B_s + \int_{\,U} {\eta(\xi_{s-}^N, u)\,\tildeN (\ud s, \ud u)}\biggr)}\biggr|\\
		&=: \uI + \uII.
	\end{align*}
	We take the expectation of $\uI$ and use Propositions~\ref{prop:first_moment_estimates} and~\ref{prop:prem_first_moment_estimate} with $p=0$, $q=\tau=t_{\nu(t,N)}$, and $H(t,s):=\bigl(K\bigl(t - t_{\nu(s,N)+1}\bigr)-K\bigl(t - s\bigr)\bigr)\ind_{\{s \leq t_{\nu(t,N)}\}}$, $t \leq T$, noticing that $\lVert H \rVert_t \leq w_{K,T}(T/N)$ to get
	\begin{equation*}
		\EE\bigl[\uI\bigr] \leq \tilde{C}_{L,K,T,X_0} w_{K,T}(T/N),
	\end{equation*}
	where $\tilde{C}_{L,K,T,X_0} \in \R_+$ is a constant depending only on $L$, the kernel~$K$, $T$ and $X_0$.
	Taking then the expectation of $\uII$, and using again Proposition~\ref{prop:first_moment_estimates} and~\ref{prop:prem_first_moment_estimate} with $p=t_{\nu(t,N)}$, $q=\tau=t$, $H(t,s):=K(t-s)\ind_{\{t_{\nu(t,N)} \leq s \leq t\}}$, $t \leq T$, with $\lVert H \rVert_t \leq \max_{[0,T]}K$ and $|p-q| \leq T/N$, we have
	\begin{equation*}
		\EE\bigl[\uII\bigr] = \EE\Bigl[\EE\bigl[\uII\,\bigl|\cF_p\bigr]\Bigr] \leq \tilde{C}_{L,K,T,X_0} \sqrt{\frac{T}{N}},
	\end{equation*}
	where we have made use of the law of iterated expectations and Jensen's inequality. The second inequality then follows by the triangle inequality {\red$\sup_{t\in[0,T]}\EE[|\xi_t^N - \barX_t^N|] \leq \sup_{t\in[0,T]}\EE[|\xi_t^N - \hatX_t^N|] + \sup_{t\in[0,T]}\EE[|\hatX_t^N - \barX_t^N|]$} and Lemma~\ref{lem:first_moment_estimates_bis}. When $K \in C^1(\R_+,\R_+)$, we have $w_{K,T}(T/N)\leq\max_{[0,T]}|K^{\prime}|\,T/N$ and thus~\eqref{eq:lem articular_case}.
\end{proof}

We are now in position to prove our strong existence result by showing the convergence of the approximating processes. We consider henceforth the approximating processes on $[0,T]$ associated respectively to the regular discretization grids of time steps $T/M$ and $T/N$, with $M,N\in \N^*$. Thus, we have at hand the processes $\hatX^M$ and $\hatX^N$, $\xi^M$ and $\xi^N$, along with $\barX^M$ and $\barX^N$. To upper bound $\EE[|\barX_t^M - \barX_t^N|]$, we need to introduce the following global assumption, which can be seen as a global version of Assumption~\ref{ass:local_conditions}.

\begin{ass}\label{ass:global_conditions}
	Suppose that
	\begin{enumerate}
		\item[(i)] there exists a constant $L^{\prime} > 0$ such that 
		\begin{equation*}
			\bigl|\mu(x) - \mu(y)\bigr| + \bigl|\sigma(x) - \sigma(y)\bigr|^2 \leq L^{\prime}\,\bigl|x - y\bigr|, \qquad \text{for all } (x,y)\in\R^2;
		\end{equation*}
		\item[(ii)]  the function $x\mapsto\eta(x,u)$ is non-decreasing for every $u \in U$ and there exists a non-negative Borel function $f:U\to\R_+$ such that 
		\begin{equation*}
			\bigl|\eta(x, u) - \eta(y, u)\bigr| \leq \bigl|x - y\bigr|^{1/2}f(u), \qquad \text{for all } (x,y,u) \in \R^2 \times U,
		\end{equation*}
		where $f$ satisfies $\int_{\,U} {(f(u) \wedge f(u)^2)\,\pi(\ud u)} < +\infty$.
	\end{enumerate}
\end{ass} 

{\red\begin{rem}
	In contrast with Assumption \ref{ass:YW_conditions}, we impose here $r_m(t)=\rho_m(t)^2=L^{\prime}\,t$, $t\in\R_+$. In fact, we need Yamada--Watanabe functions with further properties to deal with the approximation error in the next proposition. In particular, this special choice is important to have the last estimate of Equation \eqref{eq:varphi_existence} below. This was also pointed by \cite[Remark 2.4]{PS23}.
\end{rem}}

\begin{prop}\label{prop:barX} Suppose that $X_0\geq0$, Assumptions \ref{ass:conditions1}, \ref{ass:nonneg} and \ref{ass:global_conditions} hold true, $K \in C^2(\R_+,\R_+)$ is non-increasing, preserves non-negativity and such that $K(0)>0$.
	For $M,N>1$, $M \neq N$, let $\barX^M=(\barX_t^M)_{t\in[0,T]}$ and $\barX^N=(\barX_t^N)_{t\in[0,T]}$ be defined by~\eqref{eq:barX}.  Then, there exists a constant $C_{L,L^{\prime},K,T,f,X_0} \in \R_+$ such that
	\begin{equation*}
		{\red\sup_{t\in[0,T]}\EE\bigl[\bigl|\barX_t^M - \barX_t^N\bigr|\bigr]} \leq C_{L,L^{\prime},K,T,f,X_0}\,\frac{1}{\log(M \wedge N)}.
	\end{equation*}
\end{prop}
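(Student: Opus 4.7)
The plan is to apply a Yamada--Watanabe type argument to the difference $D_t := \barX_t^M - \barX_t^N$, along the lines of the proof of Theorem~\ref{thm:pathwise_uniqueness}, but with two crucial new features. First, we do not have $\xi^M = \xi^N$, so the coefficient differences to compare are $\mu(\xi^M)-\mu(\xi^N)$, $\sigma(\xi^M)-\sigma(\xi^N)$ and $\eta(\xi^M,\cdot)-\eta(\xi^N,\cdot)$. Second, we must \emph{quantify} a nonzero residual error rather than send it to zero. I would begin by writing $D_t = \int_0^t K(t-s)\,\ud H_s^{M,N}$ with $H^{M,N}$ the natural driving semimartingale, use Proposition~\ref{prop:semi_martingale} to recast $D$ as a genuine semimartingale involving $K(0) H^{M,N}$, $K'(0)\int H^{M,N}_s\,\ud s$ and a $K''$-term, and then apply It\^o's formula to $\varphi_k(D_t)$ with the doubly-indexed Yamada--Watanabe family from Appendix~\ref{sec:appendix}, suited to the choice $\rho(x)^2 = L' x$ permitted by Assumption~\ref{ass:global_conditions}.

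Taking expectations produces the same five terms as in~\eqref{eq:ito_YW}. The drift term is controlled by $K(0) L' \int_0^t \EE[|\xi_s^M - \xi_s^N|]\,\ud s$ via the global Lipschitz constant $L'$; the martingale part vanishes in expectation; the Brownian quadratic-variation and the compensated-jump terms are handled by the same Taylor-with-integral-remainder trick as in Theorem~\ref{thm:pathwise_uniqueness}, exploiting $(\sigma(x)-\sigma(y))^2 \le L'|x-y|$, the $1/2$-H\"older control of $\eta$ via $f$, and the fact that $D_s$ and the jump $K(0)(\eta(\xi_{s-}^M,u)-\eta(\xi_{s-}^N,u))$ share the same sign thanks to monotonicity in Assumption~\ref{ass:global_conditions}-(ii); the non-Markovian remainder is bounded by $\bigl(|K'(0)|+\int_0^T|K''(u)|\,\ud u\bigr)\int_0^t \EE[|H_s^{M,N}|]\,\ud s$ as in the estimate of $\uV_t$.

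The main obstacle is that $|\xi_s^M-\xi_s^N|$ does \emph{not} vanish as the Yamada--Watanabe index grows. I would bridge this by the triangle inequality
\begin{equation*}
|\xi_s^M - \xi_s^N| \;\le\; |D_s| + |\xi_s^M - \barX_s^M| + |\xi_s^N - \barX_s^N|
\end{equation*}
and invoke Lemma~\ref{lem:barX} to bound the expectation of the last two terms by $C_{L,K,T,X_0}/\sqrt{M \wedge N}$. After substitution, the standard bound $\varphi_k''(x)\le 2/(k\rho(x)^2)=2/(kL'x)$ on $(a_k,a_{k-1})$ splits the diffusion and jump contributions into a \emph{telescoping} part of size $O(T/k)$ and a \emph{residual} part of size $O(T/(k L' a_k \sqrt{M\wedge N}))$ coming from $R_s := |\xi_s^M - \barX_s^M| + |\xi_s^N - \barX_s^N|$. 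The doubly-indexed variant of $(\varphi_k)$ in Appendix~\ref{sec:appendix} is designed precisely to balance these two effects against the $L^1$-approximation error $\varphi_k(x)\ge |x|-a_{k-1}$.

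Finally, I would complement the inequality for $\EE[|D_t|]$ by a companion estimate for $\EE[|H_t^{M,N}|]$ obtained by inverting the convolution in Proposition~\ref{prop:semi_martingale}, exactly as in~\eqref{eq:eq_H_YW}, combine both into a closed inequality for $\EE[|D_t|]+\EE[|H_t^{M,N}|]$, and optimize the double index as a function of $M\wedge N$ --- where the special structure of the square-root $\rho$ enters --- to convert the trade-off into the announced $1/\log(M\wedge N)$ rate. Gr\"onwall then propagates this bound uniformly in $t\in[0,T]$. The delicate point is Step~3 of this scheme: one cannot simply let $k\to\infty$ as in the proof of Theorem~\ref{thm:pathwise_uniqueness}, and the logarithmic rate is the exact payoff of the square-root-tailored double indexing against the $(M\wedge N)^{-1/2}$ residual from Lemma~\ref{lem:barX}.
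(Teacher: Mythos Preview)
Your overall strategy matches the paper's: semimartingale representation via Proposition~\ref{prop:semi_martingale}, It\^o's formula with the doubly-indexed $\varphi_{\delta,\varepsilon}$, five terms, the triangle inequality to replace $|\xi^M-\xi^N|$ by $|D|$ plus residuals controlled by Lemma~\ref{lem:barX}, a companion inequality for $|H^{M,N}|$, Gr\"onwall, and optimization of $(\delta,\varepsilon)$ against $(M\wedge N)^{-1/2}$. The drift, martingale, quadratic-variation and $K''$-remainder terms go through as you describe.

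There is, however, a genuine gap in your treatment of the jump term $\uIV$. You assert that $D_s=\barX_s^M-\barX_s^N$ and the jump increment $K(0)\,h(\xi_{s-}^M,\xi_{s-}^N,u)$ share the same sign by monotonicity. This is false: monotonicity of $x\mapsto\eta(x,u)$ only gives $(\xi_s^M-\xi_s^N)\cdot h(\xi_s^M,\xi_s^N,u)\ge0$, and there is no reason for $\barX_s^M-\barX_s^N$ and $\xi_s^M-\xi_s^N$ to agree in sign. Without this alignment, the Taylor-remainder argument from Theorem~\ref{thm:pathwise_uniqueness} breaks: one cannot bound $\int_0^1\varphi_{\delta,\varepsilon}''\bigl(D_s+rK(0)h(\xi^M,\xi^N,u)\bigr)\,\ud r$ by $1/(|D_s|\log\delta)$ times an indicator, since the argument may pass through zero. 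Replacing this by the crude bound $\varphi_{\delta,\varepsilon}''\le 2\delta/(\varepsilon\log\delta)$ produces a term $\tfrac{\delta}{\varepsilon\log\delta}\int_0^t\EE[|D_s|]\,\ud s$, which blows up under the optimizing choice $\delta=\varepsilon^{-1}=(M\wedge N)^{1/4}$ and cannot be absorbed by Gr\"onwall.

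The paper resolves this by \emph{splitting} the jump increment: first replace $h(\xi^M,\xi^N,u)$ by $h(\barX^M,\barX^N,u)$, for which the sign alignment with $D_s$ \emph{does} hold and Lemma~\ref{lem:lem_varphi_1} yields a clean $O(\varepsilon^{1/2}+1/\log\delta)$ bound; then control the correction
\[
\varphi_{\delta,\varepsilon}\bigl(D_s+K(0)h(\xi^M,\xi^N,u)\bigr)-\varphi_{\delta,\varepsilon}\bigl(D_s+K(0)h(\barX^M,\barX^N,u)\bigr)-K(0)\bigl(h(\xi^M,\xi^N,u)-h(\barX^M,\barX^N,u)\bigr)\varphi_{\delta,\varepsilon}'(D_s)
\]
via the dedicated Lemma~\ref{lem:lem_varphi_2}, which delivers terms in $|\barX^M-\xi^M|^{1/2}+|\barX^N-\xi^N|^{1/2}$ and $\tfrac{\delta}{\varepsilon\log\delta}\bigl(|\barX^M-\xi^M|+|\barX^N-\xi^N|\bigr)$ --- precisely the residuals that Lemma~\ref{lem:barX} controls. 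This two-lemma device is the technical point singled out in the paper's introduction, and it is the missing ingredient in your outline.
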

\begin{proof}
	We begin by approximating the absolute value by suitable smooth functions $\varphi_{\delta,\varepsilon} \in C^2(\R,\R_+)$, where $\varepsilon \in (0,1)$ and $\delta \in (1,+\infty)$, such that for every $\varepsilon$ and $\delta$, $\varphi_{\delta,\varepsilon}$ satisfies
	\be\label{eq:varphi_existence}  
	|x| \leq \varepsilon + \varphi_{\delta,\varepsilon}(x), \quad 0 \leq |\varphi_{\delta,\varepsilon}^{\prime}(x)| \leq 1 \quad \text{and} \quad 0 \leq \varphi_{\delta,\varepsilon}^{\prime\prime}(x) \leq \frac{2}{|x|\log\delta}\,\ind_{[\varepsilon/\delta, \varepsilon]}(|x|), 
	\ee
	for all $x\in\R$ (refer to Appendix \ref{sec:appendix} for further details). Since $K \in C^2(\R_+,\R_+)$ by hypothesis, $\barX^{M,N}:=\barX^M-\barX^N$ is an $\FF$-semimartingale by Proposition \ref{prop:semi_martingale}, which we can express as 
	\begin{equation*}
		\barX_t^{M,N} = K(0)\,\Xi_t^{M,N} + K'(0)  \int_0^t \Xi_s^{M,N} \ud s +  \int_0^t {\biggl(\int_0^s {K''(s-r) \Xi_r^{M,N} \,\ud r}\biggr)\ud s},
	\end{equation*}
	where we define the process $\Xi^{M,N}=(\Xi_t^{M,N})_{t\in[0,T]}$ as
	\begin{equation*}
		\Xi_t^{M,N} := \int_0^t {\biggl(\bigl(\mu(\xi_{s}^M) - \mu(\xi_{s}^N)\bigr)\ud s + \bigl(\sigma(\xi_{s}^M)-\sigma(\xi_{s}^N)\bigr)\ud B_s + \int_{\,U} {\bigl(\eta(\xi_{s-}^M, u)-\eta(\xi_{s-}^N, u)\bigr)\tildeN (\ud s, \ud u)}\biggr)}.
	\end{equation*}
	By relying on \eqref{eq:varphi_existence} and applying It\^o's formula to $\varphi_{\delta,\varepsilon}(\barX^{M,N})$, we have
	\be\label{eq:eq_barX}
	\bigl|\barX_t^{M,N}\bigr| \leq \varepsilon + \varphi_{\delta,\varepsilon}\bigl(\barX_t^{M,N}\bigr) = \varepsilon + \uI_t + \uII_t + \uIII_t + \uIV_t + \uV_t,
	\ee
	for all $t\in[0,T]$, where we write
	\begin{align*}
		\uI_t &:= K(0)\int_0^t {\varphi_{\delta,\varepsilon}^{\prime}\bigl(\barX_s^{M,N}\bigr)\bigl(\mu(\xi_s^M) - \mu(\xi_s^N)\bigr)\ud s},\\
		\uII_t &:= K(0)\int_0^t {\varphi_{\delta,\varepsilon}^{\prime}\bigl(\barX_s^{M,N}\bigr)\bigl(\sigma(\xi_s^M)-\sigma(\xi_s^N)\bigr)\ud B_s}\\
		&\quad+ \int_0^t\int_{\,U} {\Bigl(\varphi_{\delta,\varepsilon}\bigl(\barX_{s-}^{M,N} + K(0)\,h\bigl(\xi_{s-}^M, \xi_{s-}^N, u\bigr)\bigr) - \varphi_{\delta,\varepsilon}\bigl(\barX_{s-}^{M,N}\bigr)\Bigr)\tildeN (\ud s, \ud u)},\\
		\uIII_t &:= \frac{1}{2}\,K(0)^2\int_0^t{\varphi_{\delta,\varepsilon}^{\prime\prime}\bigl(\barX_s^{M,N}\bigr)\bigl(\sigma(\xi_s^M)-\sigma(\xi_s^N)\bigr)^2\ud s},\\
		\uIV_t &:= \int_0^t\int_{\,U} \Big(\varphi_{\delta,\varepsilon}\bigl(\barX_s^{M,N} + K(0)\,h\bigl(\xi_s^M, \xi_s^N, u\bigr)\bigr)\\
		&\quad\quad- \varphi_{\delta,\varepsilon}\bigl(\barX_s^{M,N}\bigr) - K(0)\,h\bigl(\xi_s^M, \xi_s^N, u\bigr)\,\varphi_{\delta,\varepsilon}^{\prime}\bigl(\barX_s^{M,N}\bigr)\Big)\ud s\,\pi(\ud u)\\
		\uV_t &:= \int_0^t {\varphi_{\delta,\varepsilon}^{\prime}\bigl(\barX_s^{M,N}\bigr)\biggl(K'(0)\,\Xi_s^{M,N} + \int_0^s {K''(s-r) \Xi_r^{M,N} \,\ud r}\biggr)\ud s},
	\end{align*}
	and where we have set $h(x,y,u):=\eta(x,u)-\eta(y,u)$, for all $(x,y,u) \in \R^2 \times U$. Making use of $x\leq|x|$ for all $x\in\R$, \eqref{eq:varphi_existence} and Assumption \ref{ass:global_conditions}-(i), we first get 
	\begin{align*}
		\uI_t \leq \bigl|\uI_t\bigr| &\leq K(0)\int_0^t {\bigl|\varphi_{\delta,\varepsilon}^{\prime}\bigl(\barX_s^{M,N}\bigr)\bigr|\,\bigl|\mu(\xi_s^M) - \mu(\xi_s^N)\bigr|\,\ud s}\\
		&\leq K(0)\,L^{\prime}\biggl(\int_0^t {\bigl|\barX_s^M - \xi_s^M\bigr|\,\ud s} + \int_0^t {\bigl|\barX_s^{M,N}\bigr|\,\ud s} + \int_0^t {\bigl|\barX_s^N - \xi_s^N\bigr|\,\ud s}\biggr).
	\end{align*}
	We can then easily check notably by means of Assumption \ref{ass:conditions1}, \eqref{eq:varphi_existence} and Proposition \ref{prop:first_moment_estimates}, that $\uII=(\uII_t)_{t\in[0,T]}$ is an $\FF$-martingale and, hence, $\EE[\uII_t]=0$ for all $t\in[0,T]$. Subsequently, we deal with $\uIII$ by using \eqref{eq:varphi_existence} and Assumption \ref{ass:global_conditions}-(i),
	\begin{align*}
		\uIII_t &\leq K(0)^2\frac{L^{\prime}}{\log\delta}\int_0^t {\ind_{[\varepsilon/\delta, \varepsilon]}\bigl(\bigl|\barX_s^{M,N}\bigr|\bigr)\frac{\bigl|\xi_s^M - \xi_s^N\bigr|}{\bigl|\barX_s^{M,N}\bigr|}\,\ud s}\\
		&\leq K(0)^2\frac{\delta\,L^{\prime}}{\varepsilon\,\log\delta}\biggl(\frac{\varepsilon\,T}{\delta} + \int_0^t {\bigl|\barX_s^M - \xi_s^M\bigr|\,\ud s} + \int_0^t {\bigl|\barX_s^N - \xi_s^N\bigr|\,\ud s}\biggr).
	\end{align*}
	Concerning $\uIV$, it can be rewritten as follows
	\begin{align*}
		\uIV_t &= \int_0^t\int_{\,U} \Big(\varphi_{\delta,\varepsilon}\bigl(\barX_s^{M,N} + K(0)\,h\bigl(\barX_s^M, \barX_s^N, u\bigr)\bigr) - \varphi_{\delta,\varepsilon}\bigl(\barX_s^{M,N}\bigr)\\
		&\quad\quad- K(0)\,h\bigl(\barX_s^M, \barX_s^N, u\bigr)\,\varphi_{\delta,\varepsilon}^{\prime}\bigl(\barX_s^{M,N}\bigr)\Big)\ud s\,\pi(\ud u)\\
		&\quad+ \int_0^t\int_{\,U} \Big(\varphi_{\delta,\varepsilon}\bigl(\barX_s^{M,N} + K(0)\,h\bigl(\xi_s^M, \xi_s^N, u\bigr)\bigr) - \varphi_{\delta,\varepsilon}\bigl(\barX_s^{M,N} + K(0)\,h\bigl(\barX_s^M, \barX_s^N, u\bigr)\bigr)\\
		&\quad\quad- K(0)\,\Bigl(h\bigl(\xi_s^M, \xi_s^N, u\bigr) - h\bigl(\barX_s^M, \barX_s^N, u\bigr)\Bigr)\,\varphi_{\delta,\varepsilon}^{\prime}\bigl(\barX_s^{M,N}\bigr)\Big)\ud s\,\pi(\ud u).
	\end{align*}
	Using Assumption \ref{ass:global_conditions}-(ii) and Lemmata~\ref{lem:lem_varphi_1} and ~\ref{lem:lem_varphi_2}  with $x=\barX_s^M$, $y=\barX_s^N$, $z=\barX_s^{M,N}$, $\alpha=\xi_s^M$, $\beta=\xi_s^N$ and $c=K(0)$, we get 
	\begin{multline*}
		\uIV_t \leq \bigl(K(0) \vee K(0)^2\bigr) \int_{\,U}{\bigl(f(u) \wedge f(u)^2\bigr)\pi(\ud u)}\,\biggl[ 2\,T  \left(\varepsilon^{1/2} + \frac{1}{\log\delta}\right) \\+ 6 \int_0^t{\bigl|\barX_s^M-\xi_s^M\bigr|^{1/2}+\bigl|\barX_s^N-\xi_s^N\bigr|^{1/2}\,\ud s}  
		+  \frac{6}{\log \delta} + \frac{ 6 \delta} {\varepsilon\log\delta}\biggl(  \int_0^t{\bigl|\bar{X}^M-\xi_s^M|+|\bar{X}^N_s-\xi_s^N\bigr|\,\ud s}\biggr)\biggr].
	\end{multline*}
	For the last term, we use~\eqref{eq:varphi_existence} {\red along with Tonelli's theorem} and have
	\begin{equation*}
		\uV_t \leq \bigl|\uV_t\bigr| \leq \biggl(\bigl|K^{\prime}(0)\bigr| + {\red\int_0^T {\bigl|K^{\prime\prime}(t)\bigr|\ud t}}\biggr)\int_0^t {\bigl|\Xi_s^{M,N}\bigr|\,\ud s}.
	\end{equation*}
	In order to derive an inequality for $|\barX^{M,N}|+|\Xi^{M,N}|$, we go back to \eqref{eq:barX} so as to express $\barX^{M,N}$ with respect to $\Xi^{M,N}$ as follows:
	\begin{equation*}
		\barX_t^{M,N} = \int_0^t {K(t-s)\,\ud\Xi_s^{M,N}}.
	\end{equation*}
	By Proposition~\ref{prop:semi_martingale}, we get
	\begin{equation*}
		\barX_t^{M,N} = K(0)\,\Xi_t^{M,N} + \int_0^t {K^{\prime}(t-s)\,\Xi_s^{M,N}\,\ud s},
	\end{equation*}
	and, since $K(0)>0$, we can write
	\be\label{eq:eq_Xi}
	\bigl|\Xi_t^{M,N}\bigr| \leq \frac{1}{K(0)}\biggl(\bigl|\barX_t^{M,N}\bigr| + \max_{[0,T]}\bigl|K^{\prime}\bigr|\int_0^t {\bigl|\Xi_s^{M,N}\bigr|\,\ud s}\biggr).
	\ee
	In total, adding all the previously derived inequalities while combining \eqref{eq:eq_barX} with \eqref{eq:eq_Xi}, taking also the expectation (all quantities are non-negative), and using Jensen's inequality, we obtain
	\begin{align*}
		\EE\bigl[\bigl|\barX_t^{M,N}\bigr| + \bigl|\Xi_t^{M,N}\bigr|\bigr] &\leq C_{L^{\prime},K,T,f}\int_0^t {\EE\bigl[\bigl|\barX_s^{M,N}\bigr| + \bigl|\Xi_s^{M,N}\bigr|\bigr]\ud s}\\
		&\quad+ C_{L^{\prime},K,T,f}\bigg(\varepsilon + \varepsilon^{1/2} + \int_0^t {\EE\bigl[\bigl|\barX_s^M - \xi_s^M\bigr|\bigr]^{1/2}\ud s} + \int_0^t {\EE\bigl[\bigl|\barX_s^N - \xi_s^N\bigr|\bigr]^{1/2}\ud s}\\
		&\quad+ \frac{1}{\log\delta} + \biggl(1 + \frac{\delta}{\varepsilon\log\delta}\biggr)\biggl(\int_0^t {\EE\bigl[\bigl|\barX_s^M - \xi_s^M\bigr|\bigr]\ud s} + \int_0^t {\EE\bigl[\bigl|\barX_s^N - \xi_s^N\bigr|\bigr]\ud s}\biggr)\bigg),
	\end{align*}
	where $C_{L^{\prime},K,T,f} \in \R_+$ is a constant depending on the constant $L'$, the kernel~$K$, $T$ and $f$ through the quantity $\int_{\,U}{\bigl(f(u) \wedge f(u)^2\bigr)\pi(\ud u)}$. Relying finally on Lemma \ref{lem:barX}--\eqref{eq:lem articular_case}, while choosing $\delta=(M \wedge N)^{1/4}$ and $\varepsilon=1/(M \wedge N)^{1/4}$, we get
	\begin{align*}
		\EE\bigl[\bigl|\barX_t^{M,N}\bigr| + \bigl|\Xi_t^{M,N}\bigr|\bigr] &\leq C_{L^{\prime},K,T,f}\int_0^t {\EE\bigl[\bigl|\barX_s^{M,N}\bigr| + \bigl|\Xi_s^{M,N}\bigr|\bigr]\ud s}\\
		&\quad+ C_{L,L^{\prime},K,T,f,X_0}\biggl(\frac{1}{\log(M \wedge N)} + \frac{1}{(M \wedge N)^{1/8}}\biggr),
	\end{align*}
	where $C_{L,L^{\prime},K,T,f,X_0} \in \R_+$ and for which an application of Gr\"onwall's lemma provides the claim.
\end{proof}
\begin{thm}\label{thm:existence} Suppose that $X_0\geq0$, Assumptions \ref{ass:conditions1}, \ref{ass:nonneg} and \ref{ass:global_conditions} hold true, $K \in C^2(\R_+,\R_+)$ is non-increasing, preserves non-negativity and such that $K(0)>0$.
	Then, there exists a non-negative c\`adl\`ag solution $X=(X_t)_{t\in[0,T]}$ of Equation~\eqref{eq:SVE}. Besides, there exists a constant $C_{L,L^{\prime},K,T,f,X_0} \in \R_+$ such that
	\begin{equation*}
		{\red\sup_{t\in[0,T]}\EE\bigl[\bigl|X_t- \barX_t^N\bigr|\bigr]} \leq C_{L,L^{\prime},K,T,f,X_0}\,\frac{1}{\log( N)}.
	\end{equation*}
	$$ $$
\end{thm}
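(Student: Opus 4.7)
The strategy is to identify a candidate limit $X$ as the $L^1$-limit of the family $(\barX^N)$, verify that $X$ satisfies Equation~\eqref{eq:SVE}, and establish its càdlàg regularity and non-negativity using the approximating processes $\hatX^N$.

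\textbf{Step 1 (construction of the limit).} Proposition~\ref{prop:barX} shows that for each $t\in[0,T]$ the sequence $(\barX_t^N)_N$ is Cauchy in $L^1(\Omega)$, with uniform rate $C/\log(M\wedge N)$. Hence it converges in $L^1$ to some integrable $X_t$. Since $K\in C^2\subset C^1$, the refined estimate~\eqref{eq:lem articular_case} of Lemma~\ref{lem:barX} yields $\sup_{t\in[0,T]}\EE[|\xi_t^N-X_t|]+\sup_{t\in[0,T]}\EE[|\hatX_t^N-X_t|]\to 0$ as $N\to\infty$. Passing $M\to\infty$ in Proposition~\ref{prop:barX} with continuity of the $L^1$ norm gives the announced error bound $\sup_{t\in[0,T]}\EE[|X_t-\barX_t^N|]\leq C/\log N$. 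By extracting a subsequence $(N_k)$ along which $\xi^{N_k}\to X$ almost everywhere on $\Omega\times[0,T]$ (via Fubini), we may choose a progressively measurable version of $X$.

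\textbf{Step 2 (verification of the equation).} The plan is to set
\begin{equation*}
Y_t := X_0 + \int_0^t K(t-s)\bigl(\mu(X_s)\,\ud s + \sigma(X_s)\,\ud B_s\bigr) + \int_0^t\!\int_U K(t-s)\,\eta(X_{s-},u)\,\tildeN(\ud s,\ud u),
\end{equation*}
and to show that $\barX_t^N\to Y_t$ in $L^1$ for each $t$, which combined with Step~1 forces $X_t=Y_t$ almost surely. Under Assumption~\ref{ass:global_conditions}, the differences of the integrands are globally controlled: $|\mu(\xi_s^N)-\mu(X_s)|\leq L'|\xi_s^N-X_s|$, $|\sigma(\xi_s^N)-\sigma(X_s)|^2\leq L'|\xi_s^N-X_s|$, and $|\eta(\xi_{s-}^N,u)-\eta(X_{s-},u)|\leq f(u)|\xi_{s-}^N-X_{s-}|^{1/2}$ with $f\in L^1\cap L^2(\pi)$. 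Applying Proposition~\ref{prop:prem_first_moment_estimate} to the difference $\barX_t^N-Y_t$ therefore bounds $\EE[|\barX_t^N-Y_t|]$ by a constant times $\int_0^t \EE[|\xi_s^N-X_s|]\,\ud s$ together with its square root, and both vanish as $N\to\infty$ by Step~1.

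\textbf{Step 3 (càdlàg and non-negativity).} Since $K\in C^2(\R_+,\R_+)$, Proposition~\ref{prop:semi_martingale} provides a càdlàg semimartingale decomposition of $Y$, so $Y$ is a càdlàg modification of $X$ which we identify with $X$ henceforth. By Lemma~\ref{lem:nonnegative_approx} each $\hatX_t^N$ is almost surely non-negative; the $L^1$ convergence $\hatX_t^N\to X_t$ then yields $X_t\geq 0$ almost surely for every fixed $t$. Applying this to a countable dense subset of $[0,T]$ and using the right-continuity of $X$ gives $\PP(X_t\geq 0,\,\forall t\in[0,T])=1$.

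The main obstacle lies in Step~2, because the limit process $X$ is only obtained as an $L^1$-pointwise limit of $\xi^N$, and one must transfer this rather weak convergence into convergence of the Volterra stochastic integrals. The combination of the global regularity in Assumption~\ref{ass:global_conditions} (in particular the $L'$-square-Lipschitz bound on $\sigma$ and the $1/2$-Hölder control on $\eta$ with an $L^1\cap L^2$ envelope~$f$) is precisely calibrated to allow the first-moment estimates of Proposition~\ref{prop:prem_first_moment_estimate} to close the argument without requiring higher moments, which are unavailable here due to Assumption~\ref{ass:conditions1}.
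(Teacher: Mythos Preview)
Your overall strategy matches the paper's, but Step~2 has a genuine circularity that the paper takes care to avoid. You define
\[
Y_t = X_0 + \int_0^t K(t-s)\bigl(\mu(X_s)\,\ud s + \sigma(X_s)\,\ud B_s\bigr) + \int_0^t\!\int_U K(t-s)\,\eta(X_{s-},u)\,\tildeN(\ud s,\ud u),
\]
but at this point $X$ is only an $L^1$-pointwise limit (at best progressively measurable, from your Step~1). The left limit $X_{s-}$ has no meaning until $X$ is shown to be c\`adl\`ag, and the compensated Poisson integral requires a \emph{predictable} integrand. You only obtain the c\`adl\`ag property in Step~3 via $Y$, so the definition of $Y$ presupposes what you are trying to prove. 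The paper breaks this loop by first manufacturing a predictable version $\tildeX_t := \liminf_m m\int_{t-1/m}^t X_s\,\ud s$ (which agrees with $X$ $\ud t$-a.e.\ by Lebesgue differentiation), defining $Z_t$ with $\tildeX$ in place of $X_{s-}$, and then building the c\`adl\`ag process $\check{X}$ from $Z$ via Proposition~\ref{prop:semi_martingale}. Only after identifying $\check{X}$ as a modification of $X$ does one conclude $X_{s-}=\tildeX_s$ a.s.\ and recover the equation in its original form.

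Two smaller points. First, Proposition~\ref{prop:prem_first_moment_estimate} is stated for a single process $\xi$ with coefficients satisfying Assumption~\ref{ass:conditions1}; it does not literally bound the difference $\barX^N-Y$ in terms of $\int_0^t\EE[|\xi_s^N-X_s|]\,\ud s$. The paper instead carries out the estimate by hand (triangle inequality, It\^o isometry, splitting $\{f<1\}$ and $\{f\ge 1\}$) using Assumption~\ref{ass:global_conditions}. Second, $f$ is not assumed to lie in $L^1\cap L^2(\pi)$; only $\int_U (f\wedge f^2)\,\ud\pi<\infty$ is available, which forces the split between small and large jumps in the estimate.
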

\begin{proof}
	We consider the Banach space of $\FF$-progressively measurable processes $(Y)_{t\in[0,T]}$ such that
	\begin{equation*}
		\quad \lVert Y \rVert := \underset{[0,T]}{\sup \ }\EE\bigl[|Y|\bigr]<\infty.
	\end{equation*}
	Applying now Proposition \ref{prop:barX} for $M=N+1$ and the subsequence $N = \lceil e^{n^2} \rceil$, $n\geq1$, where by a slight abuse of notation, we denote this subsequence  $(\bar{X}^n)_{n\ge 1}$ in the proof while $(\bar{X}^N)_{N\ge 1}$ is the full sequence, we  have 
	\begin{equation*}
		\bigl\lVert \barX^{n+1} - \barX^n \bigr\rVert \leq C_{L,L^{\prime},K,T,f,X_0}\,\frac{1}{n^2}.
	\end{equation*}
	$(\barX^n)_{n\geq1}$ is therefore a Cauchy sequence, thus yielding the existence of $ X$ progressively measurable such that $\lVert \barX^n -  X \rVert \to 0$ as $n\to+\infty$. Besides, we have $\lVert \hatX^n -  X \rVert \to 0$ and $\lVert \xi^n -  X \rVert \to 0$ by Lemma \ref{lem:barX}. Resorting to Lemma \ref{lem:nonnegative_approx}, notably $\PP(\hatX_t^n \geq 0)=1$ for all $t\in[0,T]$ and $n\geq1$, it results that $\PP( X_t\geq0)=1$ for all $t\in[0,T]$. Noting also from Proposition \ref{prop:first_moment_estimates} that there exists a constant $C_{L,K,T,X_0}\in\R_+$ such that $\lVert \hatX^n \rVert \leq C_{L,K,T,X_0}$ for all $n\geq1$, we then get $\lVert  X \rVert \leq C_{L,K,T,X_0}$.
	
	We now introduce the process
	\begin{equation*}
		\tildeX_t := \liminf_{m\to+\infty}\,m\int_{t-\frac{1}{m}}^t {X_s\,\ud s}.
	\end{equation*}
	The process $\tilde{X}$ is predictable and $\tilde{X}_t=X_t$ dt-a.e. almost surely by the Lebesgue differentiation theorem. We have thus $\E[|\tilde{X}_t-X_t|]=0$ dt-a.e. and $\underset{[0,T]}{\text{ess} \sup \ }\EE\bigl[|\barX^n-\tilde{X}|\bigr]\le \|\barX^n-X\|\to 0$.

	We now show that $X$ is, up to a modification, c\`adl\`ag and solves~\eqref{eq:SVE}.  By Proposition~\ref{prop:semi_martingale}, we have
	\begin{align}
		\bar{X}^n_t&=X_0 + K(0) Z^n_t+ K'(0) \int_0^t Z^n_s ds +\int_0^t \left(\int_0^s K''(s-r) Z^n_r \ud r\right) \ud s, \label{Xn_semimg}\\
		\text{ with }  Z^n_t&=\int_0^t {\biggl(\mu(\xi_{s}^n)\,\ud s + \sigma(\xi_{s}^n)\,\ud B_s + \int_{\,U} {\eta(\xi_{s-}^n, u)\,\tildeN (\ud s, \ud u)}\biggr)}. \notag
	\end{align}
	We also introduce the process $Z=(Z_t)_{t\in[0,T]}$ given by
	\begin{equation*}
		Z_t := \int_0^t {\biggl(\mu( \tilde{X}_s)\,\ud s + \sigma( \tilde{X}_s)\,\ud B_s + \int_{\,U} {\eta( \tilde{X}_s, u)\,\tildeN (\ud s, \ud u)}\biggr)}, \ t\in[0,T],
	\end{equation*}
	which is well-defined by using Assumption~\ref{ass:conditions1}, $\| \tilde{X}\|<\infty$ and the predictability of $ \tilde{X}$. We then define the process~$\check{X}$ by
	\begin{equation} \label{eq:def_X}
		\check{X}_t=X_0 + K(0) Z_t+ K'(0) \int_0^t Z_s ds +\int_0^t \left(\int_0^s K''(s-r) Z_r \ud r\right) \ud s.
	\end{equation}
	By construction as a stochastic integral, the process $Z$ is c\`adl\`ag. So is the process $\check{X}$.

	We have by the triangle inequality and It\^o isometry
	\begin{align*}
		\E[|Z^n_t-Z_t|] &\le  \int_0^t  \E[|\mu(\xi^n_s) -\mu( \tilde{X}_s)|] \ud s + 2 \int_0^t  \int_{f(u)\ge 1} \E[|\eta(\xi^n_s,u)-\eta( \tilde{X}_s,u)|]\pi( \ud u) \ud s \\
		&+ \E \left[\int_0^t (\sigma(\xi^n_s) -\sigma( \tilde{X}_s))^2 \ud s \right]^{1/2} + \E \left[ \int_0^t \int_{f(u)<1}(\eta(\xi^n_s,u)-\eta( \tilde{X}_s,u))^2\pi( \ud u) \ud s \right]^{1/2} .
	\end{align*}
	Then, by Assumption~\ref{ass:global_conditions} and using that $\tilde{X}_t=X_t$ dt-a.e. almost surely, we get
	\begin{align*}
		\|Z^n-Z\| \le  &L'  T \|\xi^n- X\| + 2\left(\int_{f(u)\ge 1} f(u) \pi (\ud u) \right)T \|\xi^n-X\|^{1/2}+ \left( L' T  \|\xi^n-X\| \right)^{1/2} \\
		&+\left(\left(\int_{f(u)< 1} f(u)^2 \pi (\ud u) \right) T \|\xi^n-X\|\right)^{1/2} 
	\end{align*}
	We get $\|Z^n-Z\|\underset{n\to \infty}{\to} 0$ by using $\lVert \xi^n -  X \rVert \to 0$.
	We easily deduce then from~\eqref{Xn_semimg} and~\eqref{eq:def_X} that $\|\barX^n-\check{X}\| \to 0$. Thus, we get that $\|X-\check{X}\|=0$, i.e.  that $\check{X}$ is a c\`adl\`ag modification of~$X$. Without loss of generality, we may assume $X=\check{X}$. Therefore, $X_{s-}$ exists and is equal to $\tilde{X}_s$ almost surely, so that
	$$Z_t=\int_0^t {\biggl(\mu( X_s)\,\ud s + \sigma( X_s)\,\ud B_s + \int_{\,U} {\eta( X_{s-}, u)\,\tildeN (\ud s, \ud u)}\biggr)}.$$
	This shows that $X$ solves~\eqref{eq:SVE} by using~\eqref{eq:def_X} and Proposition~\ref{prop:semi_martingale}. 
	
	The last inequality follows from Proposition~\ref{prop:barX} that gives $\|\barX^n-\barX^N \|\le C_{L,L^{\prime},K,T,f,X_0}\,\frac{1}{\log( N\wedge \lceil e^{n^2}\rceil)}$ and letting $n\to \infty$.
\end{proof}

We are now in position to prove Theorem~\ref{thm:main_result}, for which the global Assumption~\ref{ass:global_conditions} is replaced with the local Assumption~\ref{ass:local_conditions}.
\begin{proof}[Proof of Theorem \ref{thm:main_result}]
	For every $m\geq 1$, we define $ \pi_m(x)=-m\vee(x\wedge m)$ the projection of $x$ on $[-m,m]$ and the functions $\mu_m,\,\sigma_m:\R\to\R$ and $\eta_m : \R \times U \to \R$ as follows:
	\begin{equation*}
		\begin{gathered}
			\mu_m(x) := \mu(\pi_m(x)), \qquad \sigma_m(x) := \sigma(\pi_m(x)), \qquad
			\eta_m(x,u) := \eta(\pi_m(x),u),\qquad (x,u) \in \R \times U.
		\end{gathered}
	\end{equation*}
	By construction, $\mu_m,\,\sigma_m$ and $\eta_m$ agree with $\mu,\,\sigma$ and $\eta$ on $[-m,m]$. They satisfy Assumption \ref{ass:global_conditions} since $\pi_m$ is Lipschitz and  $\mu$, $\sigma$ and $\eta$ satisfy Assumption~\ref{ass:local_conditions}. Hence, combining Theorems \ref{thm:existence} and \ref{thm:pathwise_uniqueness}, there exists a pathwise unique non-negative c\`adl\`ag solution $X^m=(X_t^m)_{t\geq0}$ of Equation \eqref{eq:SVE}, where we replaced $\mu,\,\sigma$ and $\eta$ with $\mu_m,\,\sigma_m$ and $\eta_m$. 
	
	As in the proof of, e.g., \cite[Theorem V.12.1]{RW00}, we define $\tau_m := \inf\{t \geq 0 : X_t^{m+1} \geq m\}$, for every $m\geq0$. Since $X^{m+1}$ is c\`adl\`ag and $\FF$-adapted by definition, $\tau_m$ is an $\FF$-stopping time. In particular, we have $\PP(X_{t}^{m+1} \leq m,\forall t\in(0,\tau_m))=1$. Using also that $\mu_{m+1},\,\sigma_{m+1}$ and $\eta_{m+1}$ agree with $\mu_m,\,\sigma_m$ and $\eta_m$ on $[0,m]$, we get that $X^{m+1}$ solve the same stochastic Volterra equation as~$X^m$ up to $\tau_m$. It then holds that $\PP(X_t^m = X_t^{m+1},\forall t\in[0,\tau_m))=1$ by Theorem~\ref{thm:pathwise_uniqueness}. Therefore, we get that $\tau_m = \inf\{t \geq 0 : X_t^m \geq m\}$ almost surely and then $\tau_m\ge \tau_{m-1}$ for $m \geq 1$. Thus, $(\tau_m)_{m\geq0}$ is non-decreasing almost surely.
	
	We now prove that $\tau_m\to \infty$ almost surely. Let $T>0$. Since the coefficients $\mu_m$, $\sigma_m$ and $\eta_m$ satisfy Assumption~\ref{ass:conditions1} (with the same constant $L$ because $|\pi_m(x)|\le |x|$) and we get by Lemma~\ref{lem:moments1} 
	\be\label{intermed_moments}\forall m \ge 1, \forall t \in[0,T], \ \E[X^m_t]\le C_{T,L,K,X_0}.
	\ee
	By Proposition~\ref{prop:semi_martingale}, we also have
	\be\label{Xm_stopped}
	X^m_{T\wedge \tau_m} =X_0+K(0)Z^m_{T\wedge \tau_m}+\int_0^{T\wedge \tau_m} K'(T\wedge \tau_m-t)Z^m_t \ud t,
	\ee
	with $Z^m_t=  \int_0^t  \left(\mu_m(X^m_s)\,\ud s + \sigma_m(X^m_s)\,\ud B_s + \int_{\,U} {\eta_m(X^m_{s-}, u)\,\tildeN (\ud s, \ud u)}\right)$.  By using Proposition~\ref{prop:prem_first_moment_estimate} with $H=1$, $p=0$, $\tau=q=t$ and \eqref{intermed_moments}, we get  
	\begin{equation*} 
		\forall m \ge 1, \forall t \in[0,T], \ \E[|Z^m_t|]\le \tilde{C}_{T,L,K,X_0},
	\end{equation*}
	for some constant $\tilde{C}_{T,L,K,X_0} \in \R_+$. Besides,  
	using Assumption~\ref{ass:conditions1} and $\tau_m = \inf\{t \geq 0 : X_t^m \geq m\}$, we get the martingale property of the stochastic integrals defining $Z_{\cdot\wedge\tau_m}^m$ and then
	$$\E[Z^m_{T\wedge \tau_m}]= \int_0^T  \E[ \ind_{t< \tau_m} \mu_m(X^m_t)] \,\ud t  \le T L(1+C_{T,L,K,X_0}),$$
	by using~\eqref{intermed_moments}. 
	From~\eqref{Xm_stopped}, we get
	$$\E[X^m_{T\wedge \tau_m}]\le X_0+K(0)T L(1+C_{T,L,K,X_0})+T \tilde{C}_{T,L,K,X_0} \left(\max_{[0,T]} |K'|\right). $$
	This bound does not depend on~$m$ and we have $\E[X^m_{T\wedge \tau_m}]\ge m\,\PP(\tau_m <T)$ as $X_{\tau_m}^m \geq m$ almost surely since $X^m$ is c\`adl\`ag. This shows that $\PP(\tau_m<T)\to 0$ and then that $\tau_m\to \infty$ almost surely since $\tau_m$ is a non-decreasing sequence. 
	
	Finally, we define the process $X=(X_t)_{t\geq0}$ by $X_t=X^m_t$ on $\{t<\tau_m\}$. This is well defined since the processes $X^{m+p}$ and $X^m$ coincide on $\{t<\tau_m\}$. $X$ is thus a c\`adl\`ag solution of Equation~\eqref{eq:SVE} up to $\tau_m$, for any $m\ge 1$, which gives that $X$ solves Equation~\eqref{eq:SVE} for all $t\ge 0$. Last, it is pathwise unique by Theorem \ref{thm:pathwise_uniqueness}, ensuring the final claim.
\end{proof}

\section{Applications: L\'evy-driven stochastic Volterra equations}\label{sec:sec6}
We investigate in this section the following one-dimensional L\'evy-driven stochastic Volterra equation of convolution type:
\be\label{eq:SVE_Levy}
X_t = X_0 + \int_0^t {K(t-s)\,\mu(X_{s})\,\ud s} + \int_0^t {K(t-s)\,\sigma(X_{s})\,\ud B_s} + \int_0^t {K(t-s)\,\gamma(X_{s-})\,\ud L_s}, 
\ee
where $X_0\in\R$, on the filtered probability space $(\Omega,\cF,\FF,\PP)$ described in Section \ref{sec:sec2} and supporting the following independent random elements:
\begin{itemize}
	\item an $\FF$-Brownian motion $B=(B_t)_{t\geq0}$;
	\item an $\FF$-L\'evy process $L=(L_t)_{t\geq0}$ with triplet $(0,0,\nu)$ where $\nu$ is the L\'evy measure on $\R_+$: $$\nu(\ud u) := u^{-1-\alpha}\,\ind_{\{u>0\}}\,\ud u, \quad \text{ with }\alpha\in(1,2),$$ which means that $L$ is a spectrally positive compensated $\alpha$-stable L\'evy process. Note that $\alpha$ is chosen such that $\int_0^{+\infty} {\bigl(u\wedge u^2\bigr)\nu(\ud u)} = \frac{1}{2-\alpha}+ \frac{1}{\alpha-1}<\infty$.
\end{itemize}
We consider the following ingredients:
\begin{itemize}
	\item $\mu, \sigma, \gamma : \R \to \R$ are continuous functions;
	\item $K : \R_+ \to \R_+$  is a non-negative continuous function.
\end{itemize}
\begin{ass}\label{ass:Levy_conditions}
	Suppose that $\sigma(0)=\gamma(0)=0$, $\mu(0)\geq0$, $x\mapsto\gamma(x)$ is non-decreasing and 
	\begin{enumerate}
		\item[(i)] there exists a constant $L > 0$ such that 
		\begin{equation*}
			\bigl|\mu(x)\bigr| + \sigma(x)^2 + \bigl|\gamma(x)\bigr|^{\alpha} \leq L\bigl(1 + |x|\bigr), \qquad \text{for all } x\in\R;
		\end{equation*}
		\item[(ii)] for every $m\geq1$, there exists a constant $L_m^{\prime} > 0$ such that
		\begin{equation*}
			\bigl|\mu(x) - \mu(y)\bigr| + \bigl|\sigma(x) - \sigma(y)\bigr|^2 + \bigl|\gamma(x) - \gamma(y)\bigr|^2 \leq L_m^{\prime}\,\bigl|x - y\bigr|, \qquad \text{for all } (x,y)\in[-m,m]^2.
		\end{equation*}
	\end{enumerate}
\end{ass}
\begin{thm}\label{thm:Levy_result}
	Suppose that $X_0\geq0$, Assumption \ref{ass:Levy_conditions} holds true and $K \in C^2(\R_+,\R_+)$ is non-increasing, preserves non-negativity and $K(0)>0$. Then, there exists a pathwise unique non-negative c\`adl\`ag solution $X=(X_t)_{t\geq0}$ of Equation~\eqref{eq:SVE_Levy}.
\end{thm}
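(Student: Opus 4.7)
The strategy is to recast Equation~\eqref{eq:SVE_Levy} in the form of Equation~\eqref{eq:SVE} and then apply Theorem~\ref{thm:main_result}. First I would represent the spectrally positive $\alpha$-stable L\'evy process $L$ through its jump measure: since the L\'evy triplet is $(0,0,\nu)$ with $\int_0^{+\infty}(u\wedge u^2)\,\nu(\ud u)<+\infty$ and $\alpha\in(1,2)$, there exists an $\FF$-Poisson random measure $N(\ud s,\ud u)$ on $[0,+\infty)\times U$ with $U:=(0,+\infty)$ and compensator $\hatN(\ud s,\ud u):=\ud s\,\nu(\ud u)$ such that
\begin{equation*}
L_t=\int_0^t\int_0^{+\infty} u\,\tildeN(\ud s,\ud u),\qquad t\geq0.
\end{equation*}
Since $L$ is a pure-jump compensated semimartingale and the integrand $s\mapsto K(t-s)\gamma(X_{s-})$ is c\`agl\`ad $\FF$-adapted for each fixed $t\geq0$, the stochastic integral against $\ud L_s$ coincides with the integral against $\tildeN$:
\begin{equation*}
\int_0^t K(t-s)\,\gamma(X_{s-})\,\ud L_s=\int_0^t\int_0^{+\infty} K(t-s)\,\gamma(X_{s-})\,u\,\tildeN(\ud s,\ud u).
\end{equation*}
This identifies Equation~\eqref{eq:SVE_Levy} with Equation~\eqref{eq:SVE} for the coefficient $\eta(x,u):=\gamma(x)\,u$ and the measure $\pi(\ud u):=\nu(\ud u)=u^{-1-\alpha}\ind_{\{u>0\}}\,\ud u$.

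Next I would verify that the hypotheses of Theorem~\ref{thm:main_result} are satisfied. The key computation is the scaling identity
\begin{equation*}
\int_0^{+\infty}\bigl(c\,u\wedge c^2\,u^2\bigr)\,u^{-1-\alpha}\,\ud u=\biggl(\frac{1}{2-\alpha}+\frac{1}{\alpha-1}\biggr)c^{\alpha},\qquad c\geq0,
\end{equation*}
obtained by splitting the minimum at $u=1/c$. Applying this with $c=|\gamma(x)|$ yields, via Assumption~\ref{ass:Levy_conditions}(i),
\begin{equation*}
\int_{\,U}\bigl(|\eta(x,u)|\wedge\eta(x,u)^2\bigr)\pi(\ud u)=\bigl(\tfrac{1}{2-\alpha}+\tfrac{1}{\alpha-1}\bigr)|\gamma(x)|^{\alpha}\leq L\,\bigl(\tfrac{1}{2-\alpha}+\tfrac{1}{\alpha-1}\bigr)(1+|x|),
\end{equation*}
which together with the bound on $|\mu(x)|+\sigma(x)^2$ gives Assumption~\ref{ass:conditions1}. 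For Assumption~\ref{ass:local_conditions}(i), the Lipschitz-type bound on $\mu$ and the $1/2$-H\"older bound on $\sigma$ are precisely Assumption~\ref{ass:Levy_conditions}(ii). For Assumption~\ref{ass:local_conditions}(ii), the monotonicity of $x\mapsto\eta(x,u)=\gamma(x)u$ follows from $\gamma$ being non-decreasing and $u>0$, and from Assumption~\ref{ass:Levy_conditions}(ii) we get
\begin{equation*}
\bigl|\eta(x,u)-\eta(y,u)\bigr|=|\gamma(x)-\gamma(y)|\,u\leq\sqrt{L_m^{\prime}}\,|x-y|^{1/2}\,u,\qquad(x,y)\in[-m,m]^2,
\end{equation*}
so the choice $f_m(u):=\sqrt{L_m^{\prime}}\,u$ works, and the same scaling identity shows $\int_{\,U}(f_m(u)\wedge f_m(u)^2)\pi(\ud u)=(L_m^{\prime})^{\alpha/2}(\tfrac{1}{2-\alpha}+\tfrac{1}{\alpha-1})<+\infty$. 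Finally, Assumption~\ref{ass:nonneg} holds since $\sigma(0)=\gamma(0)=0$ and $\mu(0)\geq0$, hence $\eta(0,u)=\gamma(0)u=0$ for every $u\in U$.

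The kernel hypotheses of Theorem~\ref{thm:main_result} are part of the statement. Invoking Theorem~\ref{thm:main_result} for this choice of $(\mu,\sigma,\eta,\pi)$ therefore yields a pathwise unique non-negative c\`adl\`ag solution of Equation~\eqref{eq:SVE}, which by the identification above is exactly Equation~\eqref{eq:SVE_Levy}. The only subtle point in this reduction is the interchange between the L\'evy integral $\int_0^t K(t-s)\gamma(X_{s-})\,\ud L_s$ and the Poisson integral $\int_0^t\int_{\,U}K(t-s)\gamma(X_{s-})u\,\tildeN(\ud s,\ud u)$, which one handles by a standard truncation argument separating small and large jumps, using that $\int_{\{u\geq1\}}u\,\nu(\ud u)<+\infty$ for $\alpha>1$ to control the compensated large-jump part in $L^1$, and the It\^o isometry on $\{u<1\}$ to control the small-jump part; this is the place where the choice $\alpha\in(1,2)$ is essential.
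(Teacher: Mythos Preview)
Your proof is correct and follows essentially the same route as the paper: represent $L$ via its compensated Poisson random measure, set $\eta(x,u)=\gamma(x)\,u$ and $\pi=\nu$, then verify Assumptions~\ref{ass:conditions1}, \ref{ass:local_conditions} and~\ref{ass:nonneg} so that Theorem~\ref{thm:main_result} applies. The only cosmetic difference is that you compute the scaling identity $\int_0^{+\infty}(cu\wedge c^2u^2)\,u^{-1-\alpha}\,\ud u=c^{\alpha}\bigl(\tfrac{1}{2-\alpha}+\tfrac{1}{\alpha-1}\bigr)$ by splitting at $u=1/c$, whereas the paper obtains it by the change of variable $v=|\gamma(x)|u$; your additional remark on identifying the L\'evy and Poisson integrals is a point the paper simply takes for granted.
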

\begin{proof}
	We express the L\'evy process $L$ by means of its L\'evy--It\^o decomposition,
	\begin{equation*}
		L_t = \int_0^t \int_0^{+\infty} {u\,\tildeN (\ud s, \ud u)},
	\end{equation*}
	almost surely for all $t\geq0$, where $N$ is the $\FF$-Poisson random measure on $[0,+\infty)^2$ representing the jumps of $L$ (see, e.g., \cite[Example II.4.1]{IW89}). In this, \eqref{eq:SVE_Levy} reduces to a special case of Equation~\eqref{eq:SVE} with $$U=\R_+,\ \pi(\ud u)=\nu(\ud u) \text{ and }\eta(x,u) = u\,\gamma(x).$$ In particular, it holds that $\eta(0,u)=0$ and $x\mapsto\eta(x,u)$ is non-decreasing for every $u\geq0$ under Assumption \ref{ass:Levy_conditions}.   By a change of variable, we also have that for all $x\in \R$,
	\begin{align*}
		\int_0^{+\infty} {\bigl(\bigl|u \gamma(x)\bigr|\wedge (\gamma(x)u)^2\bigr) u^{-1-\alpha} \ud u}= |\gamma(x)|^\alpha \int_0^{+\infty} {\bigl(u \wedge u^2\bigr)\nu(\ud u)}.
	\end{align*}
	Therefore, Assumption \ref{ass:conditions1} holds true under Assumption \ref{ass:Levy_conditions}-(i), and it can be easily verified that Assumption \ref{ass:local_conditions} holds true as well under Assumption \ref{ass:Levy_conditions}-(ii) since {\red $|\eta(x,u) - \eta(y,u)| = u|\gamma(x) - \gamma(y)|$ and} $\int _0^\infty (u\wedge u^2)\nu(du)<\infty$. The claim thus follows from a direct application of Theorem~\ref{thm:main_result}.
\end{proof}

Consider now, as a special case of Equation~\eqref{eq:SVE_Levy}, the  L\'evy-driven stochastic Volterra equation
\be\label{eq:SVE_Levy_bis}
\begin{aligned}
	X_t &= X_0 + \int_0^t {K(t-s)\bigl(a - \kappa\,X_{s-}\bigr)\ud s} + \sigma\int_0^t {K(t-s)\,\bigl|X_{s-}\bigr|^{1/2}\ud B_s}\\
	&\quad+ \eta\int_0^t {K(t-s)\,\sgn(X_{s-})\,\bigl|X_{s-}\bigr|^{1/\alpha}\ud L_s},  
\end{aligned}
\ee
where $X_0\in\R$, $\kappa\in\R$, $a, \sigma, \eta \geq0$, $\alpha\in(1,2)$ and $(B,L)$ defined as above. We also consider $K$ completely monotone as a special case of non-increasing non-negativity preserving $C^2$ kernel. 
\begin{cor}\label{cor:Levy_result}
	Suppose that $X_0\geq0$ and $K$ is completely monotone such that $0<K(0)<+\infty$. Then, there exists a pathwise unique non-negative c\`adl\`ag solution $X=(X_t)_{t\geq0}$ of Equation~\eqref{eq:SVE_Levy_bis}.
\end{cor}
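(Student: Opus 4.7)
The plan is to recognize Equation~\eqref{eq:SVE_Levy_bis} as an instance of Equation~\eqref{eq:SVE_Levy} with coefficients
\[
\mu(x)=a-\kappa x, \qquad \sigma(x)=\sigma\,|x|^{1/2}, \qquad \gamma(x)=\eta\,\sgn(x)\,|x|^{1/\alpha},
\]
and to apply Theorem~\ref{thm:Levy_result}. To this end, I would check in turn (a) that the kernel~$K$ satisfies the hypotheses of Theorem~\ref{thm:Levy_result} and (b) that Assumption~\ref{ass:Levy_conditions} holds for the coefficients above. The conclusion then follows immediately.

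For the kernel, complete monotonicity together with $0<K(0)<+\infty$ gives $K\in C^\infty(\R_+,\R_+)$ (in particular $K\in C^2$), and the definition forces $(-1)^n K^{(n)}\ge 0$, so $K$ is non-increasing. Finally, \cite[Theorem~2.3]{Alfonsi23}, invoked in the excerpt, asserts that completely monotone kernels preserve non-negativity in the sense of Definition~\ref{def:non_negativity}. Thus all kernel assumptions of Theorem~\ref{thm:Levy_result} are satisfied.

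For the coefficients, the sign conditions $\sigma(0)=\gamma(0)=0$ and $\mu(0)=a\ge 0$ are immediate, and $x\mapsto\gamma(x)=\eta\,\sgn(x)|x|^{1/\alpha}$ is non-decreasing on $\R$. The growth condition~\ref{ass:Levy_conditions}-(i) reduces to $|\mu(x)|\le a+|\kappa|\,|x|$, $\sigma(x)^2=\sigma^2|x|$ and $|\gamma(x)|^\alpha=\eta^\alpha|x|$, each bounded by $L(1+|x|)$ for a suitable~$L$. For the local regularity~\ref{ass:Levy_conditions}-(ii), $\mu$ is globally Lipschitz, while for $\sigma$ one uses the standard inequality $\bigl||x|^{1/2}-|y|^{1/2}\bigr|^2\le|x-y|$ valid on all of $\R$. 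The delicate point is the bound on $\gamma$: since $x\mapsto\sgn(x)|x|^{1/\alpha}$ is globally $1/\alpha$-H\"older continuous (an elementary concavity/scaling argument gives $|\sgn(x)|x|^{1/\alpha}-\sgn(y)|y|^{1/\alpha}|\le C\,|x-y|^{1/\alpha}$ on $\R^2$), we obtain
\[
\bigl|\gamma(x)-\gamma(y)\bigr|^2 \le \eta^2 C^2\,|x-y|^{2/\alpha}.
\]
Since $\alpha\in(1,2)$, we have $2/\alpha>1$; for $(x,y)\in[-m,m]^2$ we therefore write $|x-y|^{2/\alpha}=|x-y|\cdot|x-y|^{2/\alpha-1}\le(2m)^{2/\alpha-1}\,|x-y|$, which yields the required $L'_m\,|x-y|$ bound.

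The main (mild) obstacle is this last conversion of a global H\"older-$1/\alpha$ bound for $\gamma$ into the local Lipschitz-squared form required by Assumption~\ref{ass:Levy_conditions}-(ii); once this is in place, every hypothesis of Theorem~\ref{thm:Levy_result} is verified and the existence of a pathwise unique non-negative c\`adl\`ag solution of Equation~\eqref{eq:SVE_Levy_bis} follows directly.
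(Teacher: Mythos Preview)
Your proposal is correct and follows essentially the same approach as the paper: identify the coefficients $\mu(x)=a-\kappa x$, $\sigma(x)=\sigma|x|^{1/2}$, $\gamma(x)=\eta\,\sgn(x)|x|^{1/\alpha}$, verify Assumption~\ref{ass:Levy_conditions}, and apply Theorem~\ref{thm:Levy_result}. The paper's proof is terser (it simply invokes the H\"older condition of $x\mapsto x^{1/2}$ and $x\mapsto x^{1/\alpha}$ and does not re-check the kernel hypotheses), whereas you spell out the conversion of the $1/\alpha$-H\"older bound for~$\gamma$ into the local $|\gamma(x)-\gamma(y)|^2\le L'_m|x-y|$ form and explicitly record why complete monotonicity yields the required kernel properties.
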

\begin{proof}
	It suffices to verify that the functions $x\mapsto a-\kappa\,x$, $x\mapsto\sigma|x|^{1/2}$ and $x\mapsto\eta\sgn(x)|x|^{1/\alpha}$, for $x\in\R$, satisfy Assumption \ref{ass:Levy_conditions}. The presence of $\sgn(\cdot)$ ensures that $x\mapsto\eta\sgn(x)|x|^{1/\alpha}$ is non-decreasing. We also observe that Assumption \ref{ass:Levy_conditions}-(i) is directly satisfied. The validity of Assumption \ref{ass:Levy_conditions}-(ii) then follows from the H\"older condition of $x \mapsto x^{1/2}$ and $x \mapsto x^{1/\alpha}$ on $\R_+$.
\end{proof}
Under the conditions of Corollary \ref{cor:Levy_result}, it holds that the pathwise unique c\`adl\`ag solution $X=(X_t)_{t\geq0}$ of Equation \eqref{eq:SVE_Levy_bis} is non-negative. We can thus rewrite it as
\be\label{eq:Volterra_alphastable}
	X_t = X_0 + \int_0^t {K(t-s)\bigl(a - \kappa\,X_{s}\bigr)\ud s} + \sigma\int_0^t {K(t-s)\sqrt{X_{s}}\,\ud B_s} + \eta\int_0^t {K(t-s)\sqrt[\alpha]{X_{s-}}\,\ud L_s}. 
\ee
It corresponds to a Volterra extension of the so-called \emph{$\alpha$-stable Cox--Ingersoll--Ross process}, refer e.g. to \cite{LM15,JMS17,JMSZ21} and \cite[Section 2.6.2]{phdthesis_szulda} for further information. Let us note that this process can be seen as a Volterra affine process for which the calculation of Laplace transform can be made semi-explicit. For $T>0$, $u\in \R_-$ and an integrable nonpositive function $f:[0,T]\to \R_-$, $\exp \left(u X_T+ \int_0^T f(T-s) X_s \ud s \right) \le 1$ is integrable, and  we can formally follow the steps of~\cite[Theorem 4.3]{AJLP19} (recalling that $\E[e^{uL_t}]=\exp\left(\frac{t|u|^\alpha}{\cos \left(\frac{\pi}{2}(2-\alpha)\right)} \right)$) to get
$$\E \left[\exp \left(u X_T+ \int_0^T f(T-s) X_s \ud s \right) \bigg|\cF_t\right]= \exp(Y_t), $$
where 
$$\begin{cases}
Y_t=Y_0 +\int_0^t \psi(T-s) \sigma \sqrt{X_s} \ud B_s +\int_0^t \psi(T-s) \eta \sqrt[\alpha]{X_{s-}}\,\ud L_s \\
\phantom{Y_t=Y_0}-\int_0^t X_s \left( \frac{\sigma^2}{2} \psi^2(T-s) + \frac{\eta^\alpha}{\cos \left(\frac{\pi}{2}(2-\alpha)\right)}|\psi(T-s)|^\alpha \right) \ud s,\\
Y_0=u X_0 +X_0 \int_0^T \left(f(s) -\kappa \psi(s) +\frac{\sigma^2}{2} \psi^2(s) + \frac{\eta^\alpha}{\cos \left(\frac{\pi}{2}(2-\alpha)\right)}|\psi(s)|^\alpha\right) \ud s +a \int_0^T \psi(s) \ud s,
\end{cases}$$
and $\psi$ is the solution of the Volterra equation
$$\psi(t)=uK(t)+\int_0^t K(t-s)\left(f(s) -\kappa \psi(s) +\frac{\sigma^2}{2} \psi^2(s) + \frac{\eta^\alpha}{\cos \left(\frac{\pi}{2}(2-\alpha)\right)}|\psi(s)|^\alpha\right) \ud s,\ t \in[0,T].  $$
The characteristic function of Volterra affine processes with jumps has been very recently studied by Abi Jaber~\cite{AJ21} and Bondi et al.~\cite{BLP24} under the assumption of square integrable jumps, which is not satisfied by~\eqref{eq:Volterra_alphastable}. However, their analysis and in particular the one of~\cite[Theorem 2.5]{AJ21} could be useful to get that $\psi$ is well defined. A careful study requires further developments and is beyond the scope of this paper. 

{\red\begin{rem}
	\cite{JMSZ21} have recently proposed an extension of the Heston model with an alpha-stable Cox--Ingersoll--Ross process for the volatility. In particular, they show the effect of the parameter $\alpha$ on the volatility smile. On the other hand, \cite{EER19} have introduced the rough Heston model that has attracted a great interest. Besides, \cite{AJEE19a} have shown that multi-factor approximations of the fractional kernel can produce very similar smiles. Therefore, the solution of Equation~\eqref{eq:Volterra_alphastable} represents a natural candidate for the volatility process that extends both the alpha-Heston and multi-factor Heston models while preserving the affine structure.
\end{rem}}

\begin{appendix}
	
	\section{Auxiliary results}\label{app:sec5}
	In this appendix, we present some auxiliary results for processes obtained as the integration of a kernel with respect to a semi-martingale. Namely, we consider the following objects:
	\begin{itemize}
		\item $\xi=(\xi_t)_{t\geq0}$ is an $\FF$-adapted c\`adl\`ag process;
		\item $H:\R_+^2\to\R$ is a Borel function such that $H(t,s)=0$ whenever $0 \leq t < s$ and for all $T>0$,\\ $\|H\|_{T} :=\sup_{0 \leq s,t \leq T}|{\red H(t,s)}| <\infty$.
	\end{itemize}
	Most of the time, we will work with ${\red H(t,s)}=K(t-s)$, but sometimes it will be convenient to work with more general kernels. 
	For $0 \leq p \leq q \leq t$, we define
	\begin{equation}\label{def_XI}
		\cX(p,q,t) := \int_p^q {H(t,s)\biggl(\mu(\xi_s)\,\ud s + \sigma(\xi_s)\,\ud B_s + \int_{\,U} {\eta(\xi_{s-}, u)\,\tildeN (\ud s, \ud u)}\biggr)},
	\end{equation}
	where $B$, $\tilde{N}$ and $(\mu,\sigma,\eta)$ were introduced in Section~\ref{sec:sec2} and satisfy Assumption \ref{ass:conditions1}. 
	\begin{lem}\label{lem:wellposedness}
		Under Assumption \ref{ass:conditions1}, $\cX(p,q,t)$ is well defined and almost surely finite.
	\end{lem}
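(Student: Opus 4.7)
The strategy is a standard localization combined with a splitting of the jump measure at the natural threshold $|\eta|=1$ suggested by Assumption~\ref{ass:conditions1}. Since $\xi$ is c\`adl\`ag and $\FF$-adapted, the process is locally bounded: setting $\tau_n := \inf\{t\ge0 : |\xi_t|\vee|\xi_{t-}|\ge n\}$ yields an increasing sequence of $\FF$-stopping times with $\tau_n\to+\infty$ almost surely. Fix $T\ge q$ and work on the event $\{\tau_n>T\}$; it is enough to show that $\cX(p,q\wedge\tau_n,t)$ is well defined for each $n$, and then to let $n\to+\infty$.

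I treat the three summands in~\eqref{def_XI} separately. On $[0,\tau_n)$, Assumption~\ref{ass:conditions1} gives $|\mu(\xi_s)|+\sigma(\xi_s)^2+\int_U(|\eta(\xi_s,u)|\wedge\eta(\xi_s,u)^2)\,\pi(\ud u)\le L(1+n)$. Combined with $\|H\|_T<+\infty$, this immediately ensures that the drift term $\int_p^{q\wedge\tau_n}H(t,s)\mu(\xi_s)\,\ud s$ is a well defined Lebesgue integral bounded by $\|H\|_T L(1+n)T$, and that $\int_p^{q\wedge\tau_n}H(t,s)^2\sigma(\xi_s)^2\,\ud s\le \|H\|_T^2 L(1+n)T$, which is the standard predictable quadratic variation condition that makes the Brownian stochastic integral $\int_p^{q\wedge\tau_n}H(t,s)\sigma(\xi_s)\,\ud B_s$ well defined in the It\^o sense.

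For the jump term, the key point is the splitting based on the pointwise size of $\eta$. Set $\eta_1(x,u):=\eta(x,u)\,\ind_{\{|\eta(x,u)|\le 1\}}$ and $\eta_2(x,u):=\eta(x,u)\,\ind_{\{|\eta(x,u)|>1\}}$; on $[0,\tau_n)$ one has $\eta_1(\xi_{s-},u)^2\le |\eta(\xi_{s-},u)|\wedge\eta(\xi_{s-},u)^2$ and $|\eta_2(\xi_{s-},u)|\le |\eta(\xi_{s-},u)|\wedge\eta(\xi_{s-},u)^2$, so both
\[
\int_p^{q\wedge\tau_n}\!\!\int_U H(t,s)^2\eta_1(\xi_{s-},u)^2\,\pi(\ud u)\,\ud s \quad\text{and}\quad \int_p^{q\wedge\tau_n}\!\!\int_U |H(t,s)|\,|\eta_2(\xi_{s-},u)|\,\pi(\ud u)\,\ud s
\]
are bounded by $\|H\|_T^2 L(1+n)T$ (up to the constant $\|H\|_T$ in the second case). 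The first bound makes $\int\int H(t,s)\eta_1(\xi_{s-},u)\,\tildeN(\ud s,\ud u)$ a square-integrable compensated Poisson integral on $[0,\tau_n]$, and the second bound shows that the large-jump part can be split as $\int\int H(t,s)\eta_2(\xi_{s-},u)\,N(\ud s,\ud u)-\int\int H(t,s)\eta_2(\xi_{s-},u)\,\hatN(\ud s,\ud u)$, where both pieces are absolutely convergent (the first as an a.s.\ finite sum over the countably many atoms of $N$ in $[0,\tau_n]\times U$ at which $\eta$ is bounded away from $0$, the second as a Lebesgue integral).

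Summing the three contributions gives that $\cX(p,q\wedge\tau_n,t)$ is a.s.\ well defined and finite for every $n\ge1$, and letting $n\to+\infty$ yields the conclusion since $\PP(\tau_n>T)\to1$. The only slight technicality, which I would address by quoting the standard construction of stochastic integrals against a Poisson random measure (e.g.\ \cite[Chapter~II]{IW89}), is to check that the two threshold-dependent pieces defined above are indeed well defined; this is where the pointwise nature of Assumption~\ref{ass:conditions1} does the work, and it is the only step that requires any care.
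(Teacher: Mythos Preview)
Your proof is correct and follows essentially the same approach as the paper: localization plus the splitting of the jump integrand at the threshold $|\eta|=1$ dictated by Assumption~\ref{ass:conditions1}. The only cosmetic difference is that you localize $\xi$ directly via $\tau_n=\inf\{t:|\xi_t|\vee|\xi_{t-}|\ge n\}$, whereas the paper localizes the running integrability condition itself and then verifies that condition is a.s.\ finite using that a c\`adl\`ag path has $\int_0^t|\xi_s|\,\ud s<\infty$; both routes arrive at the same place.
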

	\begin{proof}
		It amounts to checking whether 
		\begin{multline}
			\int_p^q {\bigl|H(t,s)\bigr|\biggl(\bigl|\mu(\xi_s)\bigr| + \int_{\{|\eta(\xi_{s},u)|\geq1\}} {\!\bigl|\eta(\xi_{s}, u)\bigr|\,\pi(\ud u)}\biggr)\ud s}\\
			+ \int_p^q {H(t,s)^2\biggl(\sigma(\xi_s)^2 + \int_{\{|\eta(\xi_{s},u)|<1\}} {\!\eta(\xi_{s}, u)^2\,\pi(\ud u)}\biggr)\ud s} < +\infty. \label{condition_lemme}
		\end{multline}
		Indeed, let us define the sequence of stopping times for $n\ge 1$:
		\begin{multline*}
			\tau_n=\inf \Bigg\{ r\in[p,q] :\int_p^r {\bigl|H(t,s)\bigr|\biggl(\bigl|\mu(\xi_s)\bigr| + \int_{\{|\eta(\xi_{s},u)|\geq1\}} {\!\bigl|\eta(\xi_{s}, u)\bigr|\,\pi(\ud u)}\biggr)\ud s}\\
			+ \int_p^r {H(t,s)^2\biggl(\sigma(\xi_s)^2 + \int_{\{|\eta(\xi_{s},u)|<1\}} {\!\eta(\xi_{s}, u)^2\,\pi(\ud u)}\biggr)\ud s} \geq n \Bigg\}.
		\end{multline*}
		By classical results (see e.g.~\cite{IW89}), $\cX(p,q\wedge \tau_n,t)$ is well defined and since $\tau_n\ge q$ almost surely for $n$ large enough, $\cX(p,q,t)$ is also well defined.

		To prove~\eqref{condition_lemme}, we use Assumption \ref{ass:conditions1} and write
		\begin{multline*}
			\int_p^q {\Bigl(\bigl|H(t,s)\bigr| \vee H(t,s)^2\Bigr)\biggl(\bigl|\mu(\xi_s)\bigr| + \sigma(\xi_s)^2 + \int_{\,U} {\!\bigl(\bigl|\eta(\xi_s, u)\bigr|\wedge\eta(\xi_s, u)^2\bigr)\pi(\ud u)}\biggr)\ud s}\\
			\leq L\Bigl(\lVert H \rVert_t \vee \lVert H \rVert_t^2\Bigr)\biggl(t + \int_0^t {\xi_s\,\ud s}\biggr),
		\end{multline*}
		{\red where $\|H\|_t$ has been defined above. The right-hand side of the last equation is then almost surely finite since $\xi$ is a c\`adl\`ag process.}
	\end{proof}
	
	\begin{prop}\label{prop:prem_first_moment_estimate}
		Let $\tau$ be an $\FF$-stopping time such that $p \leq \tau \leq q \leq t$ almost surely. 
		Under Assumption \ref{ass:conditions1}, there exists a constant $C_{L}\in\R_+$ such that 
		\begin{equation*}
			\EE\bigl[\bigl|\cX(p,\tau,t)\bigr|\,\bigl|\cF_p\bigr] \leq C_{L} \|H\|_t \,\Biggl(q-p + \int_p^q {\EE\bigl[\mathbf{1}_{s<\tau }  |\xi_{s }|\,\bigl|\cF_p\bigr]\ud s} + \biggl(q-p + \int_p^q {\EE\bigl[\mathbf{1}_{s<\tau }  |\xi_{s}|\,\bigl|\cF_p\bigr]\ud s}\biggr)^{1/2}\Biggr),
		\end{equation*}
		where left and right hand sides may be infinite. 
	\end{prop}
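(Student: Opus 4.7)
The strategy is to decompose $\cX(p,\tau,t) = A + B + J^{<} + J^{\ge}$, corresponding respectively to the drift, Brownian, small-jump ($|\eta(\xi_{s-},u)|<1$) and large-jump ($|\eta(\xi_{s-},u)|\ge 1$) parts of the integral defining $\cX$ in~\eqref{def_XI}. The drift and large-jump pieces will produce the linear contribution in the target bound, while the Brownian and small-jump compensated integrals will produce the square-root contribution. Before estimating, I would introduce the localizing sequence
\[
\sigma_n := \inf\Bigl\{r\in[p,q] : \int_p^r\Bigl(|\mu(\xi_s)| + \sigma(\xi_s)^2 + \int_U\!\bigl(|\eta(\xi_s,u)|\wedge\eta(\xi_s,u)^2\bigr)\pi(\ud u)\Bigr)\ud s \ge n\Bigr\}
\]
as in the proof of Lemma~\ref{lem:wellposedness}, which tends to $+\infty$ almost surely since $\xi$ is c\`adl\`ag and Assumption~\ref{ass:conditions1} holds. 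On $[p,\tau\wedge\sigma_n]$ all four stochastic integrals become genuine $L^2$-martingales and the estimates below are rigorously justified; one then passes to the limit $n\to\infty$ by monotone convergence for the nonnegative quantities involved.

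For the two linear pieces, bound $|A| \le \|H\|_t\int_p^q \mathbf{1}_{\{s<\tau\}}|\mu(\xi_s)|\,\ud s$ pathwise, take conditional expectation, and invoke Assumption~\ref{ass:conditions1} via $|\mu(x)| \le L(1+|x|)$. For $J^{\ge}$, decompose the compensated integral as $\int H\eta\,\ud N - \int H\eta\,\pi(\ud u)\,\ud s$ on $\{|\eta|\ge 1\}$; after taking absolute values of each summand and using that $\EE[\int g\,\ud N \mid \cF_p] = \EE[\int g\,\pi(\ud u)\,\ud s \mid \cF_p]$ for nonnegative predictable $g$, the triangle inequality produces twice the compensator-type term. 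The key observation is the inequality $|\eta(x,u)|\mathbf{1}_{\{|\eta(x,u)|\ge 1\}} \le |\eta(x,u)|\wedge \eta(x,u)^2$, which combined with Assumption~\ref{ass:conditions1} delivers a bound proportional to $\|H\|_t\bigl((q-p) + \int_p^q \EE[\mathbf{1}_{\{s<\tau\}}|\xi_s| \mid \cF_p]\,\ud s\bigr)$.

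For the square-root pieces, apply the conditional Burkholder--Davis--Gundy inequality to the martingales $B$ and $J^{<}$ stopped at $\tau\wedge\sigma_n$, followed by Jensen's inequality with the concave function $\sqrt{\cdot}$ to replace the square root of the quadratic variation by the square root of its conditional expectation:
\[
\EE[|B| \mid \cF_p] \le C\|H\|_t\,\EE\!\left[\int_p^\tau \sigma(\xi_s)^2\,\ud s \,\Big|\, \cF_p\right]^{1/2}, \qquad \EE[|J^{<}| \mid \cF_p] \le C\|H\|_t\,\EE\!\left[\int_p^\tau\!\!\int_{\{|\eta|<1\}} \eta(\xi_{s-},u)^2\,\pi(\ud u)\,\ud s \,\Big|\, \cF_p\right]^{1/2}.
\]
The bounds $\sigma(x)^2 \le L(1+|x|)$ and $\eta(x,u)^2\mathbf{1}_{\{|\eta(x,u)|<1\}} \le |\eta(x,u)|\wedge \eta(x,u)^2$ (the latter integrated against $\pi$) convert both into the desired square-root form. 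Summing the four estimates and absorbing all universal constants into a single $C_L\in\R_+$ yields the stated inequality. The main subtlety lies in the localization step: one must verify that the stopped martingales are genuinely square-integrable on $[p,\tau\wedge\sigma_n]$, which is precisely why $\sigma_n$ must simultaneously control all three integrands, and check that monotone convergence applies to each conditional expectation as $n\to\infty$ so that the bound survives the limit regardless of whether either side is infinite.
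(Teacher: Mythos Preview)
Your proposal is correct and follows essentially the same strategy as the paper: the same four-way decomposition of $\cX(p,\tau,t)$, the same use of Assumption~\ref{ass:conditions1} via the inequalities $|\eta|\ind_{\{|\eta|\ge 1\}}\le |\eta|\wedge\eta^2$ and $\eta^2\ind_{\{|\eta|<1\}}\le |\eta|\wedge\eta^2$, and the same split between a linear contribution (drift and large jumps) and a square-root contribution (Brownian and small jumps). The only cosmetic differences are that the paper handles the Brownian and small-jump terms jointly via the Cauchy--Schwarz inequality $|a|+|b|\le\sqrt{2}\sqrt{a^2+b^2}$ followed by Jensen and It\^o isometry, whereas you invoke the conditional BDG inequality on each term separately; and the paper localizes only the large-jump piece (with the simpler stopping time $\tau_n=\inf\{t\ge p:\int_p^t|\xi_s|\,\ud s\ge n\}$) and passes to the limit via Fatou, while you localize all four pieces uniformly and use monotone convergence.
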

	\begin{proof}
		From~\eqref{def_XI}, we use the triangle inequality and take the conditional expectation to get
		\begin{align*}
			\EE\bigl[\bigl|\cX(p,\tau,t)\bigr|\,\bigl|\cF_p\bigr] &\leq \int_p^\tau {\bigl|H(t,s)\bigr|\,\EE\bigl[\bigl|\mu(\xi_s)\bigr|\,\bigl|\cF_p\bigr]\ud s} + \EE\biggl[\biggl|\int_p^\tau {H(t,s)\,\sigma(\xi_s)\,\ud B_s}\biggr|\,\biggl|\cF_p\biggr]\\  &\quad+ \EE\biggl[\biggl|\int_p^\tau\ \int_{\{|\eta(\xi_{s-},u)|<1\}} {\!H(t,s)\,\eta(\xi_{s-}, u)\,\tildeN (\ud s, \ud u)}\biggr|\,\biggl|\cF_p\biggr]\\
			&\quad+ \EE\biggl[\biggl|\int_p^\tau\int_{\{|\eta(\xi_{s-},,u)|\geq1\}} {\!H(t,s)\,\eta(\xi_{s-}, u)\,\tildeN (\ud s, \ud u)}\biggr|\,\biggl|\cF_p\biggr],
		\end{align*}
		where we also split the integral between small and large jumps. By Assumption~\ref{ass:conditions1}, the second and third terms of the right-hand side can be upper bounded as follows:
		\begin{align*}
			&\EE\biggl[\biggl|\int_p^q {\mathbf{1}_{s<\tau } H(t,s)\,\sigma(\xi_s)\,\ud B_s}\biggr|\,\biggr|\cF_p\biggr] + \EE\biggl[\biggl|\int_p^q\int_{\{|\eta(\xi_{s-},u)|<1\}} {\! \mathbf{1}_{s<\tau } H(t,s)\,\eta(\xi_{s-}, u)\,\tildeN (\ud s, \ud u)}\biggr|\,\biggr|\cF_p\biggr]\\
			&\quad\leq \sqrt{2}\,\EE\Biggl[\biggl(\int_p^q {\mathbf{1}_{s<\tau } H(t,s)\,\sigma(\xi_s)\,\ud B_s}\biggr)^2 + \biggl(\int_p^q\int_{\{|\eta(\xi_{s-},u)|<1\}} {\! \mathbf{1}_{s<\tau } H(t,s)\,\eta(\xi_{s-}, u)\,\tildeN (\ud s, \ud u)}\biggr)^2\,\Biggr|\cF_p\Biggr]^{1/2}\\
			&\quad = \sqrt{2}\,\EE\biggl[\int_p^q {\mathbf{1}_{s<\tau } H(t,s)^2\biggl(\sigma(\xi_s)^2 + \int_{\{|\eta(\xi_s,u)|<1\}} {\!\eta(\xi_s, u)^2\,\pi(\ud u)}\biggr)\ud s}\,\biggr|\cF_p\biggr]^{1/2}\\
			&\quad\quad\leq \sqrt{2\,L}\,\lVert H \rVert_t\biggl(q-p + \int_p^q {\EE\bigl[\mathbf{1}_{s<\tau }  |\xi_{s }|\bigr|\cF_p\bigr]\ud s}\biggr)^{1/2},
		\end{align*}
		where we have used Cauchy--Schwarz and Jensen in a row for the first inequality, It\^o isometry for the equality, and Assumption \ref{ass:conditions1} for the last inequality. 
		The first term is simply upper bounded by
		$$\int_p^q { \mathbf{1}_{s<\tau } \bigl|H(t,s)\bigr|\,\EE\bigl[\bigl|\mu(\xi_s)\bigr|\,\big|\cF_p\bigr]\ud s}\le L \|H\|_t \biggl(q-p + \int_p^q {\EE\bigl[ \mathbf{1}_{s<\tau }   |\xi_{s }|\big|\cF_p\bigr]\ud s}\biggr).$$
		We then use localization for the fourth term and introduce $\tau_n=\inf\{t\ge p : \int_p^t |\xi_s| \ud s \ge n \},$ and have $\tau_n \to +\infty$ a.s. since $\xi$ has c\`adl\`ag paths. We write
		\begin{align*}
			&\EE\biggl[\biggl|\int_p^{\tau \wedge \tau_n} \int_{\{|\eta(\xi_s,u)|\geq1\}} {H(t,s)\,\eta(\xi_s, u)\,N (\ud s, \ud u)} - \int_p^{\tau \wedge \tau_n} \int_{\{|\eta(\xi_s,u)|\geq1\}} {H(t,s)\,\eta(\xi_s, u)\,\ud s\,\pi(\ud u)}\biggr|\,\bigg|\cF_p\biggr]\\
			&\quad\leq \EE\biggl[\int_p^q \mathbf{1}_{s<\tau \wedge \tau_n} \int_{\{|\eta(\xi_s,u)|\geq1\}} {\bigl|H(t,s)\bigr|\,\bigl|\eta(\xi_s, u)\bigr|\,N (\ud s, \ud u)} \\
			& \quad \quad \quad  + \int_p^q \mathbf{1}_{s<\tau \wedge \tau_n} \int_{\{|\eta(\xi_s,u)|\geq1\}} {\bigl|H(t,s)\bigr|\,\bigl|\eta(\xi_s, u)\bigr|\,\pi(\ud u)\,\ud s}\,\bigg|\cF_p\biggr]\\
			&\quad\quad\leq 2\,L\, \Vert H \Vert_t\biggl(q-p + \int_p^q {\EE\bigl[\mathbf{1}_{s<\tau }  |\xi_{s }|\big| \cF_p\bigr]\ud s}\biggr),
		\end{align*}
		for which we in particular used $\EE\bigl[\int_p^q \mathbf{1}_{s<\tau \wedge \tau_n}  \int_{\{|\eta(\xi_s,u)|\geq 1 \}} {|H(t,s)\eta(\xi_s, u)|\,\tildeN (\ud s, \ud u)}\,|\cF_p\bigr]=0$ (see, e.g., \cite[Section II.3]{IW89}) and Assumption~\ref{ass:conditions1} for the last inequality. We then apply Fatou's Lemma and finally get
		\begin{multline*}
			\EE\bigl[\bigl|\cX(p,\tau,t)\bigr|\,\big|\cF_p\bigr] \leq  3\,L\,\Vert H \Vert_t\biggl(q-p + \int_p^q {\EE\bigl[\mathbf{1}_{s<\tau }   |\xi_{s }|\big|\cF_p\bigr]\ud s}\biggr)\\
			+ \sqrt{2\,L}\,\Vert H \Vert_t\biggl(q-p + \int_p^q {\EE\bigl[\mathbf{1}_{s<\tau }  |\xi_{s }|\big|\cF_p\bigr]\ud s}\biggr)^{1/2},
		\end{multline*}
		which yields the claim. 
	\end{proof}
	
	\begin{prop}\label{prop:semi_martingale}
		Suppose that $H(t,s)=K(t-s)$ for $s\le t$ with $K \in C^2(\R_+,\R)$. Let us consider  the process $\cX_t=\cX(0,t,t)$ for $t\ge 0$.
		Then, under Assumption \ref{ass:conditions1}, $\cX=(\cX_t)_{t\geq0}$  is an $\FF$-semimartingale  and satisfies
		\begin{align*}
			\cX_t=&\int_0^t K(t-s) \biggl(\mu(\xi_s)\,\ud s + \sigma(\xi_s)\,\ud B_s + \int_{\,U} {\eta(\xi_{s-}, u)\,\tildeN (\ud s, \ud u)}\biggr) \\
			=& K(0){\red Y_t} + \int_0^t K'(t-s){\red Y_s} \ud s \\
			=& K(0){\red Y_t} + K'(0) \int_0^t {\red Y_s} \ud s + \int_0^t \left( \int_0^s K''(s-r) {\red Y_{r}} \ud r \right) \ud s,
		\end{align*}
		with ${\red Y_t}=\int_0^t  \left(\mu(\xi_s)\,\ud s + \sigma(\xi_s)\,\ud B_s + \int_{\,U} {\eta(\xi_{s-}, u)\,\tildeN (\ud s, \ud u)}\right)$ for $t\ge 0$.
	\end{prop}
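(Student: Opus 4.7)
The plan is to first confirm that
$$Y_t := \int_0^t \mu(\xi_s)\,\ud s + \int_0^t \sigma(\xi_s)\,\ud B_s + \int_0^t\int_{\,U} \eta(\xi_{s-},u)\,\tildeN(\ud s,\ud u)$$
is a well-defined càdlàg $\FF$-semimartingale on every $[0,T]$. Under Assumption~\ref{ass:conditions1}, this is a direct consequence of Lemma~\ref{lem:wellposedness} applied with $H\equiv1$, combined with the standard construction of Itô integrals against the Brownian motion $B$ and the compensated Poisson random measure $\tildeN$. In particular, the paths of $Y$ are locally bounded, so every subsequent Lebesgue integral will be well defined.

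Next, I would fix $t\ge 0$ and view $s\mapsto f(s):=K(t-s)$ as a deterministic $C^1$ function of finite variation on $[0,t]$, so that $[f,Y]\equiv 0$ and $\ud f(s)=-K'(t-s)\,\ud s$. The classical integration by parts formula for semimartingales applied to $s\mapsto f(s)\,Y_s$ then gives
\begin{equation*}
K(0)\,Y_t \;=\; f(t)Y_t-f(0)Y_0 \;=\; \int_0^t f(s)\,\ud Y_s+\int_0^t Y_{s-}\,\ud f(s) \;=\; \int_0^t K(t-s)\,\ud Y_s-\int_0^t K'(t-s)\,Y_s\,\ud s,
\end{equation*}
where $Y_{s-}$ is replaced by $Y_s$ under the Lebesgue integral since the jump times of $Y$ form a Lebesgue-null set almost surely. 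Rearranging yields the first desired representation $\cX_t=K(0)\,Y_t+\int_0^t K'(t-s)\,Y_s\,\ud s$.

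For the third representation, I would apply the fundamental theorem of calculus to $K'\in C^1(\R_+,\R)$, namely $K'(t-s)=K'(0)+\int_s^t K''(u-s)\,\ud u$ for every $s\in[0,t]$. Substituting this into $\int_0^t K'(t-s)Y_s\,\ud s$ and swapping the order of integration on the triangle $\{(u,s):0\le s\le u\le t\}$ by (deterministic) Fubini, which is legitimate since $K''$ is continuous on a compact set and $Y$ has pathwise bounded paths on $[0,t]$, gives
\begin{equation*}
\int_0^t K'(t-s)\,Y_s\,\ud s \;=\; K'(0)\int_0^t Y_s\,\ud s + \int_0^t\!\left(\int_0^u K''(u-s)\,Y_s\,\ud s\right)\!\ud u,
\end{equation*}
which matches the last claimed identity after renaming $u\to s$, $s\to r$. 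The semimartingale property of $\cX$ is then immediate from either representation: $K(0)\,Y_t$ is a semimartingale, and $t\mapsto\int_0^t K'(t-s)\,Y_s\,\ud s$ is, by the same computation, an absolutely continuous (hence finite-variation) adapted process.

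The only genuine subtlety is the integration-by-parts step, which must be invoked for a deterministic $C^1$ integrand against a semimartingale; once this is in hand, the rest of the argument reduces to the fundamental theorem of calculus and a deterministic Fubini, so no stochastic Fubini theorem is needed.
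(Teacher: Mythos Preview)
Your proof is correct and follows essentially the same approach as the paper: the paper also obtains the first identity by applying It\^o's formula/integration by parts to the product $s\mapsto K(t-s)Y_s$ (after cosmetically splitting off $K(0)$), and then derives the third identity via the fundamental theorem of calculus on $K'$ together with a deterministic Fubini, exactly as you do. The only difference is presentational, and your direct integration-by-parts computation is arguably slightly cleaner than the paper's ``$K(t-s)=K(0)+(K(t-s)-K(0))$'' detour.
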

	
	\begin{proof}  We write $K(t-s)=K(0)+K(t-s)-K(0)$, so that $\cX_t=K(0){\red Y_t}+\int_0^t(K(t-s)-K(0)) {\red\ud Y_s}$. We apply It\^o's formula to $(K(t-s)-K(0)){\red Y_s}$ between $s=0$ and $s=t$ and get
		$$0= \int_0^t -K'(t-s) {\red Y_s}\ud s +\int_0^t (K(t-s)-K(0))  {\red\ud Y_s},$$
		leading to the first claim.
		
		Then, we have $\int_0^t K'(t-s) {\red Y_s} \ud s =K'(0) \int_0^t {\red Y_s} \ud s+ \int_0^t (K'(t-s)-K'(0)) {\red Y_s} \ud s $. Using Fubini's theorem, we get
		$$ \int_0^t (K'(t-r)-K'(0)) {\red Y_r} \ud r = \int_0^t \left( \int_r^t K''(s-r) \ud s \right){\red Y_{r}} \ud r =\int_0^t \left(\int_0^{s} K''(s-r)  {\red Y_{r}} \ud r \right)\ud s.$$
	\end{proof}
	\begin{rem}
		To get the semimartingale property, it is enough to assume that $K \in C^1(\R_+,\R)$ by using a stochastic Fubini argument as done by PrÃ¶mel and Scheffel~\cite[Lemma 3.6]{PS23} with  the help of \cite[Proposition A.2]{BDMKR97} to handle the Poisson stochastic integral. This leads to 
		$\cX_t=K(0){\red Y_t}+\int_0^t \left(\int_0^s K'(s-r) {\red\ud Y_r}\right) \ud s$, at the price of more involved arguments. 
		However, using this representation, it is not clear then how to bound the term ``$\uV$'' appearing in the proofs of Theorem~\ref{thm:pathwise_uniqueness} and Proposition~\ref{prop:barX} without assuming that $K \in C^2(\R_+,\R)$. Since we need anyway a $C^2$ kernel for our main results, we decided to state Proposition~\ref{prop:semi_martingale} this way, since it uses very simple arguments. 
	\end{rem}
	
	\section{On a variant of the Yamada--Watanabe approximation technique}\label{sec:appendix}
	We discuss below a variant of the Yamada--Watanabe approximation technique \cite{YW71}, especially used by \cite{Yamada78,Alfonsi05,GR11,LT19a,LT19b} to derive strong rates of convergence, and carried out in Section \ref{sec:sec4} to prove the existence of a strong solution to~\eqref{eq:SVE}. For $\varepsilon\in(0,1)$ and $\delta\in(1,+\infty)$, let $\psi_{\delta,\varepsilon}:\R\to\R_+$ be a non-negative continuous function whose support belongs to $[\varepsilon/\delta, \varepsilon]$ and such that
	\begin{equation*}
		\int_{\varepsilon/\delta}^{\varepsilon} {\psi_{\delta,\varepsilon}(x)\,\ud x} = 1 \quad \text{and} \quad 0\leq\psi_{\delta,\varepsilon}(x)\leq\frac{2}{x\log\delta}\,\ind_{[\varepsilon/\delta, \varepsilon]}(x), \quad \text{for all }x\in\R.
	\end{equation*}
	We then approximate the absolute value by the functions $\varphi_{\delta,\varepsilon} \in C^2(\R,\R_+)$, for $\varepsilon\in(0,1)$ and $\delta\in(1,+\infty)$, defined as
	\begin{equation*}
		\varphi_{\delta,\varepsilon}(x) := \int_0^{|x|}{\biggl(\int_0^y {\psi_{\delta,\varepsilon}(z)\,\ud z}\biggr)\ud y},
	\end{equation*}
	for all $x\in\R$, for which it can be easily checked that 
	\be\label{eq:varphi_bis}
	|x| \leq \varepsilon + \varphi_{\delta,\varepsilon}(x), \quad 0 \leq |\varphi_{\delta,\varepsilon}^{\prime}(x)| \leq 1 \quad \text{and} \quad \varphi_{\delta,\varepsilon}^{\prime\prime}(x) = \psi_{\delta,\varepsilon}(|x|) \leq \frac{2}{|x|\log\delta}\,\ind_{[\varepsilon/\delta, \varepsilon]}(|x|), 
	\ee
	for all $x\in\R$. As in \cite[Section 1.2]{LT19a}, we provide here two lemmata that permit to handle the residual term $\uIV$ arising from the application of It\^o's formula in the proof of Proposition \ref{prop:barX}. In this perspective, we will use Assumption~\ref{ass:global_conditions}-(ii) that we recall for reader's convenience: the function $x\mapsto\eta(x,u)$ is non-decreasing for every $u \in U$ and there exists a non-negative Borel function $f:U\to\R_+$ such that 
	\begin{equation*}
		\bigl|\eta(x, u) - \eta(y, u)\bigr| \leq \bigl|x - y\bigr|^{1/2}f(u), \qquad \text{for all } (x,y,u) \in \R^2 \times U,
	\end{equation*}
	where $f$ satisfies $\int_{\,U} {(f(u) \wedge f(u)^2)\,\pi(\ud u)} < +\infty$.
	
	\begin{lem}\label{lem:lem_varphi_1}
		Let Assumption~\ref{ass:global_conditions}-(ii) hold. For all $(x,y,u) \in \R^2 \times U$, $z:=x-y$ and $c > 0$, it holds that
		\begin{equation*}
			0\le \varphi_{\delta,\varepsilon}\bigl(z + c\,h(x,y,u)\bigr) - \varphi_{\delta,\varepsilon}\bigl(z\bigr) - c\,h(x,y,u)\,\varphi_{\delta,\varepsilon}^{\prime}\bigl(z\bigr) \leq 2\,\bigl(c \vee c^2\bigr)\bigl(f(u) \wedge f(u)^2\bigr)\Bigl(\varepsilon^{1/2}+\frac{1}{\log\delta}\Bigr).
		\end{equation*}
	\end{lem}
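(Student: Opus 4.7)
Write $w := c\,h(x,y,u)$ and $R := \varphi_{\delta,\varepsilon}(z+w) - \varphi_{\delta,\varepsilon}(z) - w\,\varphi'_{\delta,\varepsilon}(z)$. The plan is to combine Taylor's formula with two complementary upper bounds for $R$ and conclude via a dichotomy on the size of $f(u)$. By Taylor's formula with integral remainder,
\[ R = w^2 \int_0^1 (1-r)\,\varphi''_{\delta,\varepsilon}(z+rw)\,\ud r, \]
and since $\varphi''_{\delta,\varepsilon}(\cdot) = \psi_{\delta,\varepsilon}(|\cdot|) \geq 0$, the lower bound $R \geq 0$ is immediate. For the upper bound, two structural facts are used throughout. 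First, Assumption~\ref{ass:global_conditions}-(ii) states that $\eta(\cdot,u)$ is non-decreasing, hence $z\,h(x,y,u) \geq 0$, so $zw \geq 0$ and therefore $|z + rw| \geq |z|$ for every $r \in [0,1]$. Second, $\psi_{\delta,\varepsilon}$ is supported in $[\varepsilon/\delta,\varepsilon]$, so if $|z| > \varepsilon$ then $|z+rw| > \varepsilon$ throughout and $R = 0$; it therefore suffices to treat the case $|z| \leq \varepsilon$, in which $|w| \leq c\,|z|^{1/2}f(u) \leq c\,\varepsilon^{1/2}f(u)$.

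The first bound (Bound A, Lipschitz) uses only $|\varphi'_{\delta,\varepsilon}| \leq 1$, so that $\varphi_{\delta,\varepsilon}$ is $1$-Lipschitz and consequently
\[ R \leq 2|w| \leq 2c\,\varepsilon^{1/2}f(u). \]
The second bound (Bound B, curvature) uses the Taylor remainder above and splits on the location of $|z|$ relative to $\varepsilon/\delta$. If $|z| \geq \varepsilon/\delta$, then $|z+rw| \geq |z| \geq \varepsilon/\delta$ gives $\psi_{\delta,\varepsilon}(|z+rw|) \leq \tfrac{2}{|z|\log\delta}$, whence $R \leq \tfrac{w^2}{|z|\log\delta} \leq \tfrac{c^2 f(u)^2}{\log\delta}$ using $w^2 \leq c^2|z|f(u)^2$. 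If instead $|z| < \varepsilon/\delta$, use the uniform bound $\psi_{\delta,\varepsilon} \leq \tfrac{2\delta}{\varepsilon\log\delta}$ together with $w^2 \leq c^2|z|f(u)^2 < c^2(\varepsilon/\delta)f(u)^2$ to obtain $R \leq \tfrac{w^2\delta}{\varepsilon\log\delta} \leq \tfrac{c^2 f(u)^2}{\log\delta}$. Thus in all cases
\[ R \leq \frac{c^2 f(u)^2}{\log\delta}. \]

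It remains to combine these two bounds via a dichotomy on $f(u)$. If $f(u) \geq 1$, then $f(u) \wedge f(u)^2 = f(u)$ and Bound A yields $R \leq 2c\,\varepsilon^{1/2}f(u) \leq 2(c \vee c^2)(f(u) \wedge f(u)^2)\,\varepsilon^{1/2}$. If $f(u) < 1$, then $f(u) \wedge f(u)^2 = f(u)^2$ and Bound B yields $R \leq \tfrac{c^2 f(u)^2}{\log\delta} \leq \tfrac{(c \vee c^2)(f(u) \wedge f(u)^2)}{\log\delta}$. In either case, the target estimate $R \leq 2(c \vee c^2)(f(u) \wedge f(u)^2)\bigl(\varepsilon^{1/2} + \tfrac{1}{\log\delta}\bigr)$ follows.

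The only mildly delicate point is Bound B in the subregime $|z| < \varepsilon/\delta$, where the natural denominator $|z+rw|$ may be as small as $\varepsilon/\delta$ rather than $|z|$; using the uniform bound on $\psi_{\delta,\varepsilon}$ and the extra factor $|z| < \varepsilon/\delta$ hidden in $w^2$ yields precisely the missing $\delta$ cancellation. Everything else is bookkeeping.
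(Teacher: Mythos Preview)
Your proof is correct and follows essentially the same approach as the paper: Taylor's remainder for non-negativity, a curvature bound $R \leq c^2 f(u)^2/\log\delta$, a Lipschitz bound $R \leq 2c\,\varepsilon^{1/2}f(u)$ (both valid once one restricts to $|z|\le\varepsilon$), and the dichotomy on $f(u)\gtrless 1$. The case split in your Bound~B is unnecessary---the paper simply uses $|z+rw|\geq |z|$ together with $w^2\leq c^2|z|f(u)^2$ to cancel the $|z|$ denominator in one stroke, covering both subcases at once.
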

	\begin{proof}
		We treat the cases $\{f(u)<1\}$ and $\{f(u)\geq1\}$ separately. Since  $\varphi_{\delta,\varepsilon} \in C^2(\R,\R_+)$, we can apply Taylor's expansion with integral remainder at order two,
		\begin{equation*}
			\varphi_{\delta,\varepsilon}(z + c\,h(x,y,u)) - \varphi_{\delta,\varepsilon}(z) - c\,h(x,y,u)\,\varphi_{\delta,\varepsilon}^{\prime}(z) = c^2\,h(x,y,u)^2\int_0^1 {(1-r)\,\varphi_{\delta,\varepsilon}^{\prime\prime}(z+r\,c\,h(x,y,u))\,\ud r},
		\end{equation*}
		which, since $\varphi_{\delta,\varepsilon}^{\prime\prime}\geq0$ by \eqref{eq:varphi_bis}, implies that the left-hand side is non-negative. Using again \eqref{eq:varphi_bis}, we have
		\begin{equation*}
			\varphi_{\delta,\varepsilon}(z + c\,h(x,y,u)) - \varphi_{\delta,\varepsilon}(z) - c\,h(x,y,u)\,\varphi_{\delta,\varepsilon}^{\prime}(z) \leq \frac{2\,c^2}{\log\delta}\,h(x,y,u)^2\int_0^1 {\frac{\ind_{[\varepsilon/\delta, \varepsilon]}\bigl(|z + r\,c\,h(x,y,u)|\bigr)}{|z + r\,c\,h(x,y,u)|}\,\ud r}.
		\end{equation*}
		Since $x\mapsto\eta(x,u)$ is non-decreasing for every $u \in U$ by Assumption~\ref{ass:global_conditions}-(ii), we have $z\,h(x,y,u) \geq 0$, in particular $|z + r\,c\,h(x,y,u)| \geq |z|$ for all $(x,y,u) \in \R^2 \times U$. Observing also that $\ind_{[\varepsilon/\delta, \varepsilon]}(|z + r\,c\,h(x,y,u)|) \leq \ind_{(0, \varepsilon]}\bigl(|z + r\,c\,h(x,y,u)|\bigr)$, we then get $\ind_{[\varepsilon/\delta, \varepsilon]}(|z + r\,c\,h(x,y,u)|)\leq\ind_{(0, \varepsilon]}(|z|)$. By Assumption~\ref{ass:global_conditions}-(ii), we now use $|h(x,y,u)| \leq |z|^{1/2} f(u)$ in the above inequality and obtain
		\begin{equation*}
			\varphi_{\delta,\varepsilon}(z + c\,h(x,y,u)) - \varphi_{\delta,\varepsilon}(z) - c\,h(x,y,u)\,\varphi_{\delta,\varepsilon}^{\prime}(z) \leq \frac{2\,c^2}{\log\delta}\,\ind_{(0, \varepsilon]}(|z|)\,f(u)^2,
		\end{equation*}
		which gives the result for the case $\{f(u)<1\}$. It also gives that 
		$\varphi_{\delta,\varepsilon}(z + c\,h(x,y,u)) - \varphi_{\delta,\varepsilon}(z) - c\,h(x,y,u)\,\varphi_{\delta,\varepsilon}^{\prime}(z) =0$ for $|z|>\varepsilon$. Therefore, by applying the triangle inequality along with the mean-value theorem under \eqref{eq:varphi_bis}, noticing as well that $\sup_{\,\R}|\varphi_{\delta,\varepsilon}^{\prime}|\leq1$, we have
		\begin{align*}
			\bigl|\varphi_{\delta,\varepsilon}(z + c\,h(x,y,u))& - \varphi_{\delta,\varepsilon}(z) - c\,h(x,y,u)\,\varphi_{\delta,\varepsilon}^{\prime}(z)\bigr|\ind_{(0, \varepsilon]}(|z|) \\&
			\leq \bigl|\varphi_{\delta,\varepsilon}(z + c\,h(x,y,u)) - \varphi_{\delta,\varepsilon}(z)\bigr|\ind_{(0, \varepsilon]}(|z|) + c\,\bigl|h(x,y,u)\bigr|\ind_{(0, \varepsilon]}(|z|)\\
			&\leq 2\,c\,\bigl|h(x,y,u)\bigr|\,\ind_{(0, \varepsilon]}(|z|) \leq 2\,c\,\varepsilon^{1/2}f(u),
		\end{align*}
		where we have as before injected $|h(x,y,u)| \leq |z|^{1/2} f(u)$ at the last step.  This gives the result for the case $\{f(u)\ge 1\}$.
	\end{proof}
	
	The next lemma plays an important role in the proof of Proposition \ref{prop:barX} to analyse the distance between two approximating schemes. It has similarities but is different from~\cite[Lemma 1.4]{LT19a}.
	\begin{lem}\label{lem:lem_varphi_2}
		Let Assumption~\ref{ass:global_conditions}-(ii) hold. For all $(x,y,\alpha,\beta,u) \in \R^4 \times U$, $z:=x-y$ and $c > 0$, it holds that
		\begin{multline*}
			\varphi_{\delta,\varepsilon}\bigl(z + c\,h(\alpha,\beta,u)\bigr) - \varphi_{\delta,\varepsilon}\bigl(z + c\,h(x,y,u)\bigr) - c\,\bigl(h(\alpha,\beta,u)-h(x,y,u)\bigr)\,\varphi_{\delta,\varepsilon}^{\prime}(z)\\
			\leq 6\,\bigl(c \vee c^2\bigr)\,\bigl(f(u) \wedge f(u)^2\bigr)\Bigl(|x-\alpha|^{1/2} + |y-\beta|^{1/2} + \frac{1}{\log\delta} + \frac{\delta}{\varepsilon\log\delta}\bigl(|x-\alpha| + |y-\beta|\bigr)\Bigr).
		\end{multline*}
	\end{lem}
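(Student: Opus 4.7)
Set $a := h(\alpha,\beta,u) = \eta(\alpha,u)-\eta(\beta,u)$ and $b := h(x,y,u)$, and decompose the left-hand side as $A = T_1 + T_2$ by adding and subtracting $c(a-b)\varphi_{\delta,\varepsilon}^{\prime}(z+cb)$, where
\begin{equation*}
T_1 := c(a-b)\bigl(\varphi_{\delta,\varepsilon}^{\prime}(z+cb) - \varphi_{\delta,\varepsilon}^{\prime}(z)\bigr),\quad T_2 := \varphi_{\delta,\varepsilon}(z+ca) - \varphi_{\delta,\varepsilon}(z+cb) - c(a-b)\varphi_{\delta,\varepsilon}^{\prime}(z+cb).
\end{equation*}
The plan is to control each piece by two complementary estimates, sharp respectively in the regimes $f(u)\ge 1$ and $f(u)<1$, and then take the worse of the two to recover the prefactor $(f(u)\wedge f(u)^2)$ on the right-hand side.

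For $T_2$, a second-order Taylor expansion around $z+cb$ yields $T_2 = c^2(a-b)^2\int_0^1(1-r)\varphi_{\delta,\varepsilon}^{\prime\prime}(z+cb+rc(a-b))\,dr \ge 0$. On the one hand, $\sup|\varphi_{\delta,\varepsilon}^{\prime\prime}|\le 2\delta/(\varepsilon\log\delta)$ together with $(a-b)^2 \le 2f(u)^2(|x-\alpha|+|y-\beta|)$ (which follows from Assumption~\ref{ass:global_conditions}-(ii)) gives $T_2 \le 2c^2 f(u)^2\delta(|x-\alpha|+|y-\beta|)/(\varepsilon\log\delta)$, the estimate needed when $f(u)<1$; on the other, the mean value theorem together with $|\varphi_{\delta,\varepsilon}^{\prime}|\le 1$ yields $T_2\le 2c|a-b|\le 2cf(u)(|x-\alpha|^{1/2}+|y-\beta|^{1/2})$, which is the estimate needed when $f(u)\ge 1$. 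For $T_1$, the monotonicity of $\eta(\cdot,u)$ in Assumption~\ref{ass:global_conditions}-(ii) ensures $zb\ge 0$ and consequently $|\varphi_{\delta,\varepsilon}^{\prime}(z+cb)-\varphi_{\delta,\varepsilon}^{\prime}(z)|\le 2$, immediately giving $|T_1|\le 2cf(u)(|x-\alpha|^{1/2}+|y-\beta|^{1/2})$ and handling the case $f(u)\ge 1$.

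The main obstacle is the refined estimate of $T_1$ when $f(u)<1$, as the naive bound above carries only one factor of $f(u)$, whereas the right-hand side requires $f(u)^2$. The remedy is to exploit the support $\{|s|\in[\varepsilon/\delta,\varepsilon]\}$ of $\varphi_{\delta,\varepsilon}^{\prime\prime}$, which forces $|\varphi_{\delta,\varepsilon}^{\prime}(z+cb)-\varphi_{\delta,\varepsilon}^{\prime}(z)|=0$ whenever $|z|>\varepsilon$, so that in the non-trivial regime $|b|\le f(u)\sqrt{\varepsilon}$; together with the integration-by-parts identity
\begin{equation*}
cb\bigl(\varphi_{\delta,\varepsilon}^{\prime}(z+cb)-\varphi_{\delta,\varepsilon}^{\prime}(z)\bigr) = G(x,y,u) + \int_z^{z+cb}(s-z)\varphi_{\delta,\varepsilon}^{\prime\prime}(s)\,ds,
\end{equation*}
in which $G(x,y,u) := \varphi_{\delta,\varepsilon}(z+cb)-\varphi_{\delta,\varepsilon}(z)-cb\varphi_{\delta,\varepsilon}^{\prime}(z)$ is already controlled by Lemma~\ref{lem:lem_varphi_1}, and with the bound $(cb)^2\le c^2 f(u)^2\varepsilon$ available for the second summand, this allows one to recover an $f(u)^2$-type control of $|T_1|$ in this regime. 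Gathering the estimates of $T_1$ and $T_2$ under both regimes and absorbing numerical constants then yields the claimed inequality with prefactor $6(c\vee c^2)(f(u)\wedge f(u)^2)$.
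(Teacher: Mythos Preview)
Your decomposition $A=T_1+T_2$ coincides with the paper's, and your treatment of $T_2$ in both regimes as well as of $T_1$ when $f(u)\ge 1$ is fine. The gap is in the $f(u)<1$ estimate for $T_1$. Your integration-by-parts identity controls $cb\bigl(\varphi_{\delta,\varepsilon}'(z+cb)-\varphi_{\delta,\varepsilon}'(z)\bigr)$, but $T_1=c(a-b)\bigl(\varphi_{\delta,\varepsilon}'(z+cb)-\varphi_{\delta,\varepsilon}'(z)\bigr)$ carries the factor $a-b$, not $b$. Knowing that $G+\int$ is of order $f(u)^2$ only tells you that $|\varphi_{\delta,\varepsilon}'(z+cb)-\varphi_{\delta,\varepsilon}'(z)|\le C f(u)^2/|cb|$; multiplying by $c|a-b|$ leaves the ratio $|a-b|/|b|$, which is unbounded. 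So the argument as written does not produce an $f(u)^2$ bound on $T_1$, and the sketch ``this allows one to recover an $f(u)^2$-type control of $|T_1|$'' is not justified.

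The paper closes this by a different and simpler device. One writes $\varphi_{\delta,\varepsilon}'(z+cb)-\varphi_{\delta,\varepsilon}'(z)=cb\int_0^1\varphi_{\delta,\varepsilon}''(z+rcb)\,dr$, so that $T_1=c^2 b(a-b)\int_0^1\varphi_{\delta,\varepsilon}''(z+rcb)\,dr$, and then applies Young's inequality $2b(a-b)\le b^2+(a-b)^2$. The $b^2$ half combines with $\varphi_{\delta,\varepsilon}''(z+rcb)\le 2/(|z+rcb|\log\delta)$ and the monotonicity $|z+rcb|\ge|z|$ together with $b^2\le f(u)^2|z|$ to give the $f(u)^2/\log\delta$ term; the $(a-b)^2$ half, bounded using $(a-b)^2\le 2f(u)^2(|x-\alpha|+|y-\beta|)$ and $\sup\varphi_{\delta,\varepsilon}''\le 2\delta/(\varepsilon\log\delta)$, gives the $f(u)^2\delta(|x-\alpha|+|y-\beta|)/(\varepsilon\log\delta)$ term. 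This Young splitting is the missing idea; once you insert it in place of the integration-by-parts step, the rest of your proof goes through with the stated constants.
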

	
	\begin{proof}
		As above, we separate the cases $\{f(u)<1\}$ and $\{f(u)\geq1\}$. We first rewrite the left-hand side of the above inequality as follows:
		\begin{multline*}
			\varphi_{\delta,\varepsilon}\bigl(z + c\,h(\alpha,\beta,u)\bigr) - \varphi_{\delta,\varepsilon}\bigl(z + c\,h(x,y,u)\bigr) - c\,\bigl(h(\alpha,\beta,u)-h(x,y,u)\bigr)\,\varphi_{\delta,\varepsilon}^{\prime}(z)\\
			= \varphi_{\delta,\varepsilon}\bigl(z + c\,h(x,y,u) + c\,\bigl(h(\alpha,\beta,u)-h(x,y,u)\bigr)\bigr) - \varphi_{\delta,\varepsilon}\bigl(z + c\,h(x,y,u)\bigr)\\
			- c\,\bigl(h(\alpha,\beta,u)-h(x,y,u)\bigr)\bigl(\varphi_{\delta,\varepsilon}^{\prime}(z) - \varphi_{\delta,\varepsilon}^{\prime}\bigl(z + c\,h(x,y,u)\bigr)\bigr)\\ - c\,\bigl(h(\alpha,\beta,u)-h(x,y,u)\bigr)\,\varphi_{\delta,\varepsilon}^{\prime}\bigl(z + c\,h(x,y,u)\bigr).
		\end{multline*}
		Since $\varphi_{\delta,\varepsilon} \in C^2(\R,\R_+)$, we apply Taylor's expansion with integral remainder at order two to the first term of the right-hand side, yielding
		\begin{multline*}
			\varphi_{\delta,\varepsilon}\bigl(z + c\,h(\alpha,\beta,u)\bigr) - \varphi_{\delta,\varepsilon}\bigl(z + c\,h(x,y,u)\bigr) - c\,\bigl(h(\alpha,\beta,u)-h(x,y,u)\bigr)\,\varphi_{\delta,\varepsilon}^{\prime}(z)\\
			= c^2\,\bigl(h(\alpha,\beta,u)-h(x,y,u)\bigr)^2\int_0^1 {(1-r)\,\varphi_{\delta,\varepsilon}^{\prime\prime}\bigl(z + c\,h(x,y,u) + r\,c\,\bigl(h(\alpha,\beta,u)-h(x,y,u)\bigr)\bigr)\ud r}\\
			- c\,\bigl(h(\alpha,\beta,u)-h(x,y,u)\bigr)\bigl(\varphi_{\delta,\varepsilon}^{\prime}(z) - \varphi_{\delta,\varepsilon}^{\prime}\bigl(z + c\,h(x,y,u)\bigr)\bigr).
		\end{multline*}
		The second term of the right-hand side is then coped with by applying Taylor's expansion with integral remainder at order one to $\varphi_{\delta,\varepsilon}^{\prime}$, which gives
		\begin{multline*}
			\varphi_{\delta,\varepsilon}\bigl(z + c\,h(\alpha,\beta,u)\bigr) - \varphi_{\delta,\varepsilon}\bigl(z + c\,h(x,y,u)\bigr) - c\,\bigl(h(\alpha,\beta,u)-h(x,y,u)\bigr)\,\varphi_{\delta,\varepsilon}^{\prime}(z)\\
			= c^2\,\bigl(h(\alpha,\beta,u)-h(x,y,u)\bigr)^2\int_0^1 {(1-r)\,\varphi_{\delta,\varepsilon}^{\prime\prime}\bigl(z + c\,h(x,y,u) + r\,c\,\bigl(h(\alpha,\beta,u)-h(x,y,u)\bigr)\bigr)\ud r}\\
			+ c^2\,h(x,y,u)\,\bigl(h(\alpha,\beta,u)-h(x,y,u)\bigr)\int_0^1 {\varphi_{\delta,\varepsilon}^{\prime\prime}\bigl(z + r\,c\,h(x,y,u)\bigr)\ud r}.
		\end{multline*}
		Denoting the first and second terms of the right-hand side by $\uI$ and $\uII$, respectively, we bound the former by using \eqref{eq:varphi_bis} and observing that $h(\alpha,\beta,u)-h(x,y,u)=h(\alpha,x,u)-h(\beta,y,u)$, as follows:
		\begin{align*}
			\uI &\leq \frac{2\,c^2}{\log\delta}\bigl(h(\alpha,\beta,u)-h(x,y,u)\bigr)^2\int_0^1 {\frac{\ind_{[\varepsilon/\delta, \varepsilon]}\bigl(\bigl|z + c\,h(x,y,u) + r\,c\,\bigl(h(\alpha,\beta,u)-h(x,y,u)\bigr)\bigr|\bigr)}{\bigl|z + c\,h(x,y,u) + r\,c\,\bigl(h(\alpha,\beta,u)-h(x,y,u)\bigr)\bigr|}\,\ud r}\\
			&\leq \frac{2\,\delta\,c^2}{\varepsilon\log\delta}\bigl(h(\alpha,x,u)-h(\beta,y,u)\bigr)^2\int_0^1 {\ind_{[\varepsilon/\delta, \varepsilon]}\bigl(\bigl|z + c\,h(x,y,u) + r\,c\,\bigl(h(\alpha,\beta,u)-h(x,y,u)\bigr)\bigr|\bigr)\,\ud r}\\
			&\leq \frac{4\,\delta\,c^2}{\varepsilon\log\delta}\bigl(h(\alpha,x,u)^2 + h(\beta,y,u)^2\bigr)
			\leq \frac{4\,\delta\,c^2}{\varepsilon\log\delta}\bigl(|x-\alpha| + |\beta-y|\bigr)f(u)^2,
		\end{align*}
		where we have bounded the indicator function by one directly, used $x\leq|x|$ along with Jensen's inequality, and injected $|h(x,y,u)| \leq |x-y|^{1/2} f(u)$ by Assumption~\ref{ass:global_conditions}-(ii). We then bound the second term using again~\eqref{eq:varphi_bis}, $x\leq|x|$, and $h(\alpha,\beta,u)-h(x,y,u)=h(\alpha,x,u)-h(\beta,y,u)$, which gives
		\begin{equation*}
			\uII \leq \frac{2\,c^2}{\log\delta}\bigl|h(x,y,u)\bigr|\bigl|h(\alpha,x,u)-h(\beta,y,u)\bigr|\int_0^1 {\frac{\ind_{[\varepsilon/\delta, \varepsilon]}\bigl(\bigl|z + r\,c\,h(x,y,u)\bigr|\bigr)}{\bigl|z + r\,c\,h(x,y,u)\bigr|}\,\ud r}.
		\end{equation*}
		Using that $2d \tilde{d} \le d^2 + \tilde{d}^2$, the triangle inequality and Jensen's, we obtain
		\begin{align*}
			\uII &\leq \frac{c^2}{\log\delta}\,h(x,y,u)^2\int_0^1 {\frac{\ind_{[\varepsilon/\delta, \varepsilon]}\bigl(\bigl|z + r\,c\,h(x,y,u)\bigr|\bigr)}{\bigl|z + r\,c\,h(x,y,u)\bigr|}\,\ud r}\\
			&\quad+ \frac{2\,\delta\,c^2}{\varepsilon\log\delta}\,\bigl(h(\alpha,x,u)^2 + h(\beta,y,u)^2\bigr)\int_0^1 {\ind_{[\varepsilon/\delta, \varepsilon]}\bigl(\bigl|z + r\,c\,h(x,y,u)\bigr|\bigr)\,\ud r}\\
			&\leq 2\,c^2\biggl(\frac{1}{\log\delta} + \frac{\delta}{\varepsilon\log\delta}\bigl(|x-\alpha| + |y-\beta|\bigr)\biggr)f(u)^2,
		\end{align*}
		where we have again used the fact that $x\mapsto\eta(x,u)$ is non-decreasing to bound the first term (as in the proof of Lemma~\ref{lem:lem_varphi_1}), and injected $|h(x,y,u)| \leq |x-y|^{1/2} f(u)$ as above. 
		
		For $f(u)\geq 1$ , it suffices to make use of $x\leq|x|$, the triangle inequality, \eqref{eq:varphi_bis} and the mean-value theorem to write
		\begin{multline*}
			\varphi_{\delta,\varepsilon}\bigl(z + c\,h(\alpha,\beta,u)\bigr) - \varphi_{\delta,\varepsilon}\bigl(z + c\,h(x,y,u)\bigr) - c\,\bigl(h(\alpha,\beta,u)-h(x,y,u)\bigr)\,\varphi_{\delta,\varepsilon}^{\prime}(z)\\
			\leq \bigl|\varphi_{\delta,\varepsilon}\bigl(z + c\,h(\alpha,\beta,u)\bigr) - \varphi_{\delta,\varepsilon}\bigl(z + c\,h(x,y,u)\bigr)\bigr| + c\,\bigl|h(\alpha,\beta,u)-h(x,y,u)\bigr|\bigl|\varphi_{\delta,\varepsilon}^{\prime}(z)\bigr|\\
			\leq 2\,c\,\bigl|h(\alpha,x,u) - h(\beta,y,u)\bigr|\leq 2\,c\,\bigl(|h(\alpha,x,u)| + |h(\beta,y,u)|\bigr)\\
			\leq 2\,c\,\bigl(|x-\alpha|^{1/2} + |y-\beta|^{1/2}\bigr)f(u),
		\end{multline*}
		where we use again that $h(\alpha,\beta,u)-h(x,y,u)=h(\alpha,x,u)-h(\beta,y,u)$ and Assumption~\ref{ass:global_conditions}-(ii).
	\end{proof}	
	
\end{appendix}

\bibliographystyle{alpha}
\bibliography{biblio_Volterra}

\end{document}